\newtheorem{theorem}{Theorem}[section]
\newtheorem{proposition}[theorem]{Proposition}
\newtheorem{lemma}[theorem]{Lemma}
\newtheorem{corollary}[theorem]{Corollary}
\newtheorem{definition}[theorem]{Definition}
\theoremstyle{plain}
\theoremstyle{remark}
\newtheorem{remark}[theorem]{Remark}
\newcommand{\tb}{\tilde{b}}
\newcommand{\tc}{\tilde{c}}
\newcommand{\C}{{\mathbb C}}
\newcommand{\Q}{{\mathbb Q}}
\newcommand{\N}{{\mathbb N}}
\newcommand{\fp}{\mathfrak p}
\newcommand{\Gm}{\mathbb{G}_{\text{m}}}
\newcommand{\Gmn}{\mathbb{G}_{\text{m}}^{n}}
\newcommand{\Qbar}{\bar{\Q}}
\newcommand{\Ebar}{\bar{E}}
\newcommand{\bP}{{\mathbb P}}
\newcommand{\bA}{{\mathbb A}}
\newcommand{\lra}{\longrightarrow}
\author{D.~Ghioca}
\address{
Dragos Ghioca\\
Department of Mathematics\\
University of British Columbia\\
Vancouver, BC V6T 1Z2\\
Canada
}
\email{dghioca@math.ubc.ca}
\author{K.~D.~Nguyen}
\address{
Khoa D.~Nguyen \\
Department of Mathematics\\
University of British Columbia\\
And Pacific Institute for The Mathematical Sciences\\ 
Vancouver, BC V6T 1Z2, Canada}
\email{dknguyen@math.ubc.ca}
\urladdr{www.math.ubc.ca/\~{}dknguyen}
\keywords{split polynomial maps, periods, polynomial semiconjugacy, Medvedev-Scanlon classification}
\subjclass[2010]{Primary: 37P05. Secondary: 14G99}
\begin{document}
	\title[Uniform bounds for periods and application]{Dynamics of split polynomial maps: uniform bounds for periods and applications}
	
	
	\begin{abstract}
	Let $K$ be an algebraically closed field of
	characteristic 0. Following Medvedev-Scanlon, a polynomial 
	of degree $\delta\geq 2$ is said to be disintegrated
	if it is not linearly conjugate to
	$x^{\delta}$ or $\pm T_{\delta}(x)$ where $T_{\delta}(x)$ is the Chebyshev 
	polynomial of degree $\delta$.
	Let $d$ and $n$ be integers greater than 1, we 
	prove that there exists an effectively computable constant 
	$c(d,n)$ depending only
	on $d$ and $n$ such that the following holds. Let 
	$f_1,\ldots,f_n\in K[x]$ be disintegrated polynomials
	of degree at most $d$ and let 
	$\varphi=f_1\times\cdots\times f_n$ be the 
	induced coordinate-wise self-map of 
	$\bA^n_K$. Then the period of every irreducible $\varphi$-periodic
	subvariety of $\bA^n_K$ with non-constant projection
	to each factor $\bA^1_K$
	is at most
	$c(d,n)$. As an immediate application, we prove an
	instance
	 of the dynamical Mordell-Lang problem
	following recent work of Xie. The main technical
	ingredients are Medvedev-Scanlon classification of
	invariant subvarieties together with classical and more
	recent results in Ritt's theory of polynomial 
	decomposition.
	\end{abstract}

	\maketitle{}
	
 \section{Introduction}\label{sec:introduction}
  Throughout this paper, let $\N$ denote the set of positive
  integers, $\N_0:=\N\cup\{0\}$, and let $K$ be an algebraically closed
  field of characteristic 0. For a map $\mu$ from a set to
  itself and for $m\in\N$, we let
  $\mu^{\circ m}$ denote the $m$-fold iterate
  $\mu\circ\ldots\circ\mu$. For every 
  polynomial $f(x)\in K[x]$, the notations $f^{\circ m}$,
  $f^{\circ m}(x)$, and $f(x)^{\circ m}$ 
  have the same meaning which is the $m$-fold 
  iterate $f\circ\ldots\circ f$.  
  
  Let $\delta\geq 2$ be an integer,
  the Chebyshev polynomial $T_\delta(x)\in K[x]$ is the
  polynomial of degree $\delta$ satisfying $\displaystyle 
  T_\delta\left(x+\frac{1}{x}\right)=x^\delta+\frac{1}{x^\delta}$. Following the terminology in Medvedev-Scanlon \cite{MedSca}, we say that a polynomial 
  $f(x)\in K[x]$ is
  \emph{disintegrated} 
  if it has degree $\delta\geq 2$
  and it is not linearly conjugate to
  $x^\delta$ or $\pm T_\delta(x)$. Let $n\in\N$ 
  and let $f_1,\ldots,f_n\in K[x]$ be polynomials
  of degree at least 2 . Let $\varphi=f_1\times\cdots\times f_n$
  be the induced coordinate-wise self-map of 
  $\bA^n_K$. An irreducible subvariety
  $V$ of $\bA^n_K$ is said to be $\varphi$-periodic
  if there exists $N\in\N$ satisfying
  $\varphi^{\circ N}(V)=V$; the smallest such $N$ is called
  the period  (or more precisely $\varphi$-period) of $V$.
  According to \cite[Theorem~2.30]{MedSca},
  to study the algebraic dynamics of $\varphi$,
  it suffices to study two cases: the case when
  none of the $f_i$'s are disintegrated which reduces
  to the geometry of $\Gmn$ (see, for instance, 
  \cite[Chapter~3]{BG}) and the case when all the $f_i$'s are
  disintegrated. In the first case, we note that the period 
  under the multiplication-by-$d$ map
  of a subvariety of $\Gm^2$
  can be as large as possible. For instance, the torsion 
  translate
  defined by $x=\zeta y$ where $\zeta$ is a primitive
   $(d^N-1)$-th root of unity has period $N$ under the map
   $(x,y)\mapsto (x^d,y^d)$. On the other hand, we obtain
   the following result in the latter case:
  
 \begin{theorem}\label{thm:intro1}
 	Let $n\geq 1$ and $d\geq 2$ be integers, then there is a 
 	an effectively computable
 	constant
 	$c(d,n)$ depending only on $d$ and $n$ such that the 
 	following holds.
 	 Let $f_1,\ldots,f_n\in K[x]$
 	be disintegrated polynomials of degree at 
 	most $d$.
 	Let $\varphi:=f_1\times\cdots\times f_n$
 	be the induced coordinate-wise self-map on $\bA^n_K$. 
 	Let $V$ be an irreducible $\varphi$-periodic subvariety
    of $\bA^n_K$ such that the projection from
    $V$ to each factor $\bA^1_K$ is non-constant.
 	Then the period of $V$ is at most $c(d,n)$. 
 \end{theorem}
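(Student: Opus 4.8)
The plan is to combine the Medvedev--Scanlon classification of $\varphi$-periodic subvarieties with uniform results from Ritt's theory of polynomial decomposition. Recall that by \cite[Theorem~2.30]{MedSca}, an irreducible $\varphi$-periodic subvariety $V$ of $\bA^n_K$ whose projection to each coordinate is dominant is ``built'' out of graphs: after replacing $\varphi$ by an iterate, $V$ is an irreducible component of a variety defined by equations of two types --- coordinates $x_i$ that are preperiodic points of $f_i$ (excluded here since all projections are non-constant), and equations $x_j = g(x_i)$ where $g$ is a polynomial satisfying a \emph{semiconjugacy} relation $f_j\circ g = g\circ f_i$ (and additionally $g$ commutes with a common iterate, etc.). The first step, then, is to reduce Theorem~\ref{thm:intro1} to a purely one-dimensional question: bound, in terms of $d$ alone, the period of a periodic curve in $\bA^2_K$ of the form $\{x_2 = g(x_1)\}$, and then bound the number of coordinates that can be ``chained together'' --- but since $n$ is fixed this second count is automatically at most $n$. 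More precisely, I would show that the $\varphi$-period of $V$ divides (a bounded multiple of) the least common multiple, over pairs of coordinates linked by such a semiconjugacy $g$, of the periods of the corresponding orbit relations, and each of these is controlled once we control the possible polynomials $g$.

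The heart of the matter is therefore the following: given disintegrated $f, h \in K[x]$ of degree $\le d$ and a polynomial $g$ with $h \circ g = g \circ f$, how large can the relevant period be, and what are the constraints on $g$? Here one uses Ritt's theory. Up to linear conjugacy, a semiconjugacy $h\circ g = g\circ f$ forces $\deg f = \deg h$, and by the work of Medvedev--Scanlon (their ``Ritt swapping'' analysis) and the more recent results of Pakovich and of Nguyen on polynomial semiconjugacies, the polynomial $g$ together with $f,h$ sits inside a decomposition structure with uniformly bounded combinatorial complexity: the number of indecomposable factors of $f$ is at most $\log_2 d$, and the semiconjugating $g$ is, up to precomposition by an iterate of $f$, one of boundedly many ``elementary'' maps (essentially determined by a common decomposition of $f$ and $h$). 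Because a disintegrated polynomial is neither $x^\delta$ nor $\pm T_\delta$, the exceptional large families that occur for power and Chebyshev maps (torsion translates, roots of unity) do not arise, and one extracts an absolute bound $N_0(d)$ such that after replacing $\varphi$ by $\varphi^{\circ N_0(d)}$ every such graph $\{x_2=g(x_1)\}$ is actually \emph{invariant}, not merely periodic. Tracking effectivity through Ritt's second theorem and its uniform refinements gives the ``effectively computable'' clause.

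Once every ``edge'' graph is made invariant by passing to $\varphi^{\circ N_0(d)}$, the remaining point is to argue that a component $V$ of the intersection of such invariant graphs (over the $\le \binom{n}{2}$ coordinate pairs) is itself $\varphi^{\circ N_0(d)}$-invariant. This is where one must be slightly careful: an irreducible component of an invariant variety need only be periodic, with period at most the number of components, which a priori could be large. I would handle this by invoking the finer structure in \cite[Theorem~2.30]{MedSca}: the components of the relevant intersection are permuted by $\varphi$ in a way dictated by how $f_i$ permutes fibers, and for disintegrated $f_i$ the only periodic fibers up to bounded iterate are fixed, so the number of components in an orbit is bounded purely in terms of $d$ and $n$ (indeed one can take it to be a divisor of a bounded integer). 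Combining, one sets $c(d,n)$ to be $N_0(d)$ times this combinatorial bound.

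The main obstacle I anticipate is the second step of the previous paragraph made precise, namely obtaining a \emph{uniform} (in the coefficients of $f_1,\dots,f_n$, and depending only on $d,n$) bound on how the finitely many components of an invariant intersection are cyclically permuted by $\varphi$ --- equivalently, ruling out long cycles among the ``roots'' of a disintegrated polynomial that feed into the Medvedev--Scanlon normal form. A secondary technical difficulty is assembling the various uniform statements from Ritt's theory (bounds on the number of essentially distinct complete decompositions, and on the semiconjugating polynomials) into a single clean bound $N_0(d)$, and verifying that each is effective; this is bookkeeping, but the known effective forms of Ritt's second theorem make it routine. I expect the disintegrated hypothesis to be used decisively precisely at the point of excluding the torsion/root-of-unity phenomena, so that the bound genuinely depends only on the degree.
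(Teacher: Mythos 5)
Your high-level outline -- Medvedev–Scanlon classification to reduce to graphs, then uniform Ritt-theoretic bounds to control periods -- points at the right machinery, but the proposal has a genuine gap at exactly the place you flag as the ``main obstacle,'' and moreover you have not pinpointed the correct uniform Ritt statement that makes the argument work.

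The paper's proof does \emph{not} try to directly control how $\varphi$ permutes the irreducible components of $\bigcap_{i<j}\pi_{ij}^{-1}(\pi_{ij}(V))$ in the original coordinates; as you correctly observe, after passing to a bounded iterate this intersection is invariant, but the number of its components is not \emph{a priori} bounded (the projections $\pi_{ij}(V)$ are graphs of semiconjugating polynomials whose degrees are unbounded), and ``for disintegrated $f_i$ the only periodic fibers up to bounded iterate are fixed'' is not a statement that, as written, yields a bound on the number of cycled components. The paper avoids this issue entirely by first invoking a uniform version of the following fact: if $f_1,\dots,f_n$ are pairwise $\approx$-equivalent disintegrated polynomials of degree $\le d$, there exist $N\le c_1(d,n)$ and a \emph{single} polynomial $\theta$ together with $p_1,\dots,p_n$ with $f_i^{\circ N}\circ p_i=p_i\circ\theta$ for all $i$ (Theorem~\ref{thm:semiconjugate} and Corollary~\ref{cor:new semiconjugate}). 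Pulling the given $V$ back through $\rho:=p_1\times\cdots\times p_n$ reduces to a $\theta\times\cdots\times\theta$-periodic subvariety $\tilde V$ for the \emph{diagonal} map, and in that situation the Medvedev--Scanlon theorem~\cite[Theorem~6.24]{MedSca} (quoted here as Proposition~\ref{prop:MS diagonal}) says $\tilde V$ is cut out \emph{directly} by equations $x_i=g(x_j)$ with $g$ commuting with an iterate of $\theta$, not merely that it is a component of a big intersection; the periodicity bound then drops out from Lemma~\ref{lem:commute everything}, which says $\theta^{\circ N_1}$ with $N_1\le\deg(\theta)/2$ commutes with every such $g$. In short: the pullback to a single $\theta$ is what upgrades ``component of an intersection'' to ``cut out by equations,'' and that is the step missing from your outline.

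Secondarily, the uniform Ritt result you gesture at (``$g$ sits inside a decomposition structure with uniformly bounded combinatorial complexity, $g$ is up to precomposition by an iterate of $f$ one of boundedly many elementary maps'') is not the statement the proof actually rests on; the decisive input is the \emph{existence of a common semiconjugate iterate with uniformly bounded exponent} for two $\approx$-equivalent disintegrated polynomials $f,g$ (Theorem~\ref{thm:semiconjugate}: one may take $N\le 2d^4$). That result requires a genuine argument through Lemma~\ref{lem:Inou}, Lemma~\ref{lem:f^NLg^N}, and the $\Gamma(\cdot)$-group bounds, and the ``disintegrated'' hypothesis enters there precisely through the fact that $f^{\circ 4}$ is neither cyclic nor dihedral (Lemma~\ref{lem:f^4}) and hence $\Gamma(f^{\circ 4})$ has order $\le d^4/2$ -- not only through the informal exclusion of ``torsion/root-of-unity phenomena'' as you suggest.
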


 In Theorem~\ref{thm:intro1}, the condition that the projection 
 to each factor $\bA^1_K$
 is non-constant is necessary. For instance, let $\zeta_1$
 be periodic under $f_1$, then the $\varphi$-period
 of $\{\zeta_1\}\times \bA^{n-1}_K$ is the $f_1$-period
 of $\zeta_1$ which can be arbitrarily large. Following
 the proof (see Section~\ref{sec:proof main}), the constant
 $c(d,2)$ can be taken to be $\displaystyle\frac{d^{2d^4}}{2}$ and for $n\geq 3$, 
 $c(d,n)$ can
 be defined recursively in an explicit way. It is 
 conceivable that such constants  are not best possible; the goal of our theorem is to show that they depend only on $d$ and $n$, and they are independent on the actual coefficients of the polynomials $f_i$. The proof of Theorem~\ref{thm:intro1} uses the classification of $\varphi$-period
 subvarieties by Medvedev-Scanlon \cite{MedSca} and a uniform
 bound involving semiconjugacy relation among disintegrated
 polynomials that might be of independent interest (see Section~\ref{sec:semiconjugacy}, in particular Theorem~\ref{thm:semiconjugate}).

 As an  application of our Theorem~\ref{thm:intro1}, we
 prove the following instance (see Theorem~\ref{thm:DML}) of the Dynamical Mordell-Lang
 Conjecture following recent work of Junyi Xie \cite{Xie-2015}.
 For a survey on this conjecture, we refer
 the readers to the upcoming book of Jason Bell, Thomas Tucker,
 and the first author \cite{BGT-book}. By an arithmetic progression, we mean a set of the form $\{a+bk:\ k\in\N_0\}$
 for some $a,b\in\N_0$. Note that this definition
 also includes singletons (when $b=0$). We have the following:
  
 \begin{theorem}\label{thm:DML}
 Let $F_1,\dots, F_n\in K[x]$ and 
 let $\Phi=F_1\times\cdots\times F_n$ be the induced 
 coordinate-wise self-map of $\bA^n_K$. Let $C\subset \bA^n_K$ 
 be a curve, and let $\alpha\in \bA^n(K)$. Then 
 the set $\{m\in\N\colon \Phi^{\circ m}(\alpha)\in C\}$ is a 
 finite union of arithmetic progressions.
 \end{theorem}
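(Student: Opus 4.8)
The plan is to reduce Theorem~\ref{thm:DML}, by projecting to pairs of coordinates, to Xie's dynamical Mordell--Lang theorem for polynomial endomorphisms of $\bA^2$ \cite{Xie-2015}, using Theorem~\ref{thm:intro1} (and the classification it is built on) to keep the bookkeeping finite; the degenerate cases are handled by hand and by an induction on $n$.

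\emph{Reductions.} Only finitely many elements of $K$ appear among the coefficients of $F_1,\dots,F_n$, the equations of $C$, and the coordinates of $\alpha$, so we may take $K\subseteq\C$. Decomposing $C$ into irreducible components, treat one irreducible curve $C$ at a time. If some $F_i$ is constant one quickly reduces the dimension, so assume $\deg F_i\ge 1$ for all $i$; if $\alpha$ is $\Phi$-preperiodic the orbit is finite and we are done, so assume $\alpha$ is not preperiodic. If $\pi_i(C)=\{c\}$ for some $i$, then $\{m:\Phi^{\circ m}(\alpha)\in C\}\subseteq\{m:F_i^{\circ m}(\alpha_i)=c\}$, and the latter is a finite union of arithmetic progressions (the one-dimensional case; see \cite{BGT-book} for the affine subcase); on each such progression $\{a+bk\}$ the value $c$ is $F_i$-periodic, so $\Phi^{\circ b}$ preserves $\{x_i=c\}\cong\bA^{n-1}$, the point $\Phi^{\circ a}(\alpha)$ lies there, $C\subseteq\{x_i=c\}$, and the induction on $n$ applies to the split map $\prod_{j\ne i}F_j^{\circ b}$. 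So we may also assume $\pi_i(C)$ non-constant for every $i$, and that $E:=\{m:\Phi^{\circ m}(\alpha)\in C\}$ is infinite. When $n=2$ this is exactly Xie's theorem.

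\emph{Projection to pairs and orbit analysis.} Assume $n\ge 3$. For $i<j$ set $\Phi_{ij}:=F_i\times F_j$ and $C_{ij}:=\overline{\pi_{ij}(C)}$, an irreducible curve in $\bA^2$ with non-constant axis projections; then $E\subseteq E_{ij}:=\{m:\Phi_{ij}^{\circ m}(\pi_{ij}(\alpha))\in C_{ij}\}$, which is a finite union of arithmetic progressions by Xie, so $\bigcap_{i<j}E_{ij}$ is too. Hence it suffices to fix an arithmetic progression $P=\{a+bk:k\in\N_0\}$ inside $\bigcap_{i<j}E_{ij}$ and show $P\cap E$ is a finite union of arithmetic progressions. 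Let $Y:=\bigcap_{i<j}\pi_{ij}^{-1}(C_{ij})$; then $C\subseteq Y$ and $\Phi^{\circ m}(\alpha)\in Y$ for all $m\in P$. No $C_{1j}$ is a vertical line (else $\pi_1(C)$ is constant), so every fibre of $C_{1j}\to\bA^1_{x_1}$ is finite, hence so is every fibre of $Y\to\bA^1_{x_1}$; thus $\dim Y\le 1$, and $C$ is a component of $Y$ with every other component a curve meeting $C$ finitely or a point. Now put $\beta:=\Phi^{\circ a}(\alpha)$, $\psi:=\Phi^{\circ b}$, and let $W$ be the Zariski closure of the $\psi$-orbit of $\beta$; then $W\subseteq Y$, so $\dim W\le 1$, and since $\beta$ is not $\psi$-preperiodic we get $\dim W=1$, the finite morphism $\psi$ maps $W$ onto $W$ and permutes its finitely many one-dimensional components, each of which is one of the (finitely many one-dimensional) components of $Y$. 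All but finitely many $\psi^{\circ k}(\beta)$ avoid the zero-dimensional components of $W$ and the pairwise intersections of its components, and for those $k$ the sequence of components containing $\psi^{\circ k}(\beta)$ is eventually periodic with period dividing the order of the induced permutation. Since $C$ is one of the one-dimensional components of $Y$, either $C$ occurs among the components of $W$, whence $\{k:\psi^{\circ k}(\beta)\in C\}$ is a finite union of arithmetic progressions, or it does not, whence $C\cap W$ is finite and, distinct iterates being distinct, $\{k:\psi^{\circ k}(\beta)\in C\}$ is finite. Either way $P\cap E$ is a finite union of arithmetic progressions, and as there are finitely many $P$, so is $E$.

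\emph{Main obstacle.} I expect the genuinely delicate points to be the reductions and the bookkeeping rather than any single hard estimate. Organizing the first step so that the induction on $n$ terminates, the growth of the degrees $\deg F_j^{\circ b}$ is harmless, and arithmetic progressions are tracked correctly through the passage to slices and iterates, is finicky; and it is here that Theorem~\ref{thm:intro1} does its work, its uniform bound ensuring that only finitely many $\varphi$-periodic curves $C_{ij}$ and finitely many permutation-periods enter, so that the final union of arithmetic progressions is finite. The conceptual heart, however, is that membership $P\subseteq\bigcap_{i<j}E_{ij}$ does \emph{not} give $P\subseteq E$: one must follow the true $\Phi$-orbit inside the auxiliary variety $Y$, and the key leverage is the one-dimensionality of $Y$ --- elementary given the non-constant projections, but also exactly what the Medvedev--Scanlon description \cite{MedSca} of $\Phi$-invariant subvarieties predicts.
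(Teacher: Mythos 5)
There is a genuine gap. After reducing to $K\subseteq\C$ and (for $n\ge 3$) projecting to pairs, you close the argument by invoking ``Xie's theorem'' for the two-dimensional case; but the cited result \cite[Theorem~0.3]{Xie-2015} is established only over $K=\Qbar$, and you never reduce the field further than $\C$. That extension from $\Qbar$ to an arbitrary field of characteristic $0$ is precisely what Theorem~\ref{thm:DML} claims and is the real content of the paper. The paper's proof handles it by an induction on the transcendence degree of a finitely generated subfield $\mathcal K$ containing all the data over $\Qbar$: presenting $\mathcal K$ as the function field of a curve $X/E$, specializing at all but finitely many places $\fp\in X(\Ebar)$ (preserving degrees, disintegratedness by \cite[Proposition~7.8]{BGKT}, geometric irreducibility of $C_\fp$ by Bertini--Noether, and non-preperiodicity of $\alpha_{2,\fp}$ by \cite[Proposition~6.2]{GTZieve}), applying the inductive hypothesis to conclude that each $C_\fp$ is periodic, and then using the \emph{uniform} bound $c(d,n)$ from Theorem~\ref{thm:intro1} to get a single $N$ with $(F_{1,\fp}\times F_{2,\fp})^{\circ N}(C_\fp)=C_\fp$ for infinitely many $\fp$, forcing $\Phi^{\circ N}(C)=C$ generically. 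Your sketch omits this specialization step entirely, and your last paragraph's gloss on Theorem~\ref{thm:intro1} (``ensuring that only finitely many $\varphi$-periodic curves $C_{ij}$ and finitely many permutation-periods enter'') misidentifies where the uniform bound acts: it is not about the number of auxiliary curves or the cycle length of a permutation, but about bounding the period of the specialized curves independently of the place $\fp$ so that periodicity lifts from the special fibers to the generic one.

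Two secondary remarks. First, the paper reduces from $n$ to $n=2$ at once via \cite[Theorem~1.4]{BGKT} (and \cite{BGT-book}), then deals only with plane curves; your route through $Y=\bigcap_{i<j}\pi_{ij}^{-1}(C_{ij})$ and the orbit closure is a different but workable reduction, and the observation that $\dim Y\le 1$ because no $C_{1j}$ is a vertical line is correct. Second, the paper must also separately handle the non-disintegrated case (Case~2 in Section~\ref{sec:proof DML}), since Theorem~\ref{thm:intro1} says nothing there; in that case the result uses the known dynamical Mordell--Lang statements for monomial/Chebyshev maps (\cite{GT09}, \cite{BGT}), a case your proposal does not single out.
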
 
  
 Theorem~\ref{thm:DML} when $K=\Qbar$ was obtained by 
 Xie \cite[Theorem~0.3]{Xie-2015}; we note that \cite{Xie-2015} is a veritable tour de force---an almost 100 pages long paper which proves the Dynamical Mordell-Lang Conjecture for plane curves. Theorem~\ref{thm:DML}   
 removes the condition $K=\Qbar$ and it is proven by combining Xie's 
 result together
 with standard specialization arguments using 
 the uniform bound from Theorem~\ref{thm:intro1}. During the 
 preparation of this paper, we learned from Xie that after
 some further steps and a more careful 
 analysis of the arguments in \cite{Xie-2015},
 he is able to extend his results over
 any field.  However, Xie's extension of his \cite[Theorem~0.3]{Xie-2015} from $K=\Qbar$ to an arbitrary field $K$ uses completely different arguments than the ones we employ, since he does not prove a uniform bound for the period of a variety as in our Theorem~\ref{thm:intro1}. Instead, Xie told us that he needs to re-write his proof from \cite{Xie-2015} in a more general setup since his arguments do not easily extend from $\Qbar$ to an arbitrary field using specialization theorems. On the other hand, the main result of our paper (Theorem~\ref{thm:intro1}) provides a very direct route to proving Theorem~\ref{thm:DML} using specialization techniques.
  
 The organization of this paper is as follows. In the next
 section, we present a theorem involving semiconjugate 
 polynomials (see Theorem~\ref{thm:semiconjugate}). In order
 to prove this theorem, we need some classical and new
 results in Ritt's theory of polynomial decomposition
 as proven in the seminal paper \cite{ZieveMuller-2008}. Then we introduce
 the Medvedev-Scanlon classification of periodic subvarieties 
 under split polynomial maps \cite{MedSca} and give the proof of
 Theorem~\ref{thm:intro1}. The proof of Theorem~\ref{thm:DML}
 is given in the last section. 
  
  {\bf Acknowledgments.} We are grateful to Alice Medvedev,
  Fedor Pakovich, Thomas Scanlon, and Junyi Xie  
	for useful discussions. The first author is partially supported by NSERC and the second author is partially supported
	by a UBC-PIMS postdoctoral fellowship.
  
  \section{Uniform Bounds for Semiconjugate Polynomials}\label{sec:semiconjugacy}
  \begin{definition}
   Let $f,\eta \in K[x]$ be polynomials of degree at least 2. The polynomial $\eta$ is said to be semiconjugate to $f$
   if there is a non-constant polynomial $p(x)\in K[x]$ such
   that $f\circ p=p\circ \eta$. 
  \end{definition}
  
  For recent results on semiconjugate polynomials, we
  refer the readers to Pakovich's paper \cite{Pakovich-2015}.
  It is not difficult to prove that if $\eta$ is
  semiconjugate to $f$ then $f$ is disintegrated if
  and only if $\eta$ is disintegrated
  (see, for instance, \cite[Theorem~4.4]{Pakovich-2015}). We
  define the relation $\approx$ in the set of disintegrated
  polynomials:
  \begin{definition}\label{def:equiv}
  For any disintegrated polynomials $f$ and $g$ in $K[x]$,
  we write $f\approx g$ if there exist $N\in\N$
  such that there is a polynomial $\eta$ that is 
  semiconjugate
  to both $f^{\circ N}$ and $g^{\circ N}$.
  \end{definition}
  According to \cite[Corollary~2.35]{MedSca}, $f\approx g$
  if and only if the self-map $(x,y)\mapsto (f(x),g(y))$
  of $\bA^2$ admits a periodic curve $C$ having non-constant 
  projection to each factor $\bA^1$. Then it is not difficult  
  to
  show that $\approx$ is an \emph{equivalence relation}
  in the set of disintegrated polynomials (see \cite[Section~7.1]{DBHC}). As explained in
  \cite[Section~7.1]{DBHC}, if $f_1,\ldots,f_m$ are
  in the same equivalence class under $\approx$, then there is
  $N\in\N$ such that there exists a common polynomial
  $\eta$ that is semiconjugate to each of the 
  polynomial $f_i^{\circ N}$.
  Besides the Medvedev-Scanlon classification, the key result to the proof
  of Theorem~\ref{thm:intro1} is the
  following:
  
  \begin{theorem}\label{thm:semiconjugate}
  Let $d\geq 2$ be an integer. Let $f$ and $g$ be 
  disintegrated 
  polynomials in $K[x]$ of degree at most $d$ 
  satisfying $f\approx g$.
  Then there exists $N\leq 2d^4$ and a common polynomial
  $\eta\in K[x]$ that is semiconjugate
  to both  
  $f^{\circ N}$ and $g^{\circ N}$.
  \end{theorem}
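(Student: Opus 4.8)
The plan is to exploit Ritt's theory of polynomial decomposition as developed in \cite{ZieveMuller-2008} to control the "complexity" of a semiconjugacy. The starting point is the following reformulation: by Definition~\ref{def:equiv}, there exists some $M\in\N$ and a non-constant polynomial $\eta$ with a polynomial $p$ such that $f^{\circ M}\circ p = p\circ\eta$, and similarly $q$ with $g^{\circ M}\circ q = q\circ\eta$. The issue is that a priori $M$ can be arbitrarily large and $\deg p,\deg q$ arbitrarily large; we need to replace $M$ by a bounded $N\leq 2d^4$. The key observation is that semiconjugacy $f\circ p = p\circ\eta$ forces a relation between the \emph{decompositions} of $f^{\circ k}$ for growing $k$, and Ritt/Zieve--M\"uller theory says that the set of (equivalence classes of) decompositions of a fixed polynomial, and more importantly the way decompositions "propagate" under iteration, is heavily constrained. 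First I would show that if $f\circ p = p\circ\eta$ with $\deg p$ minimal, then one can bound $\deg p$ in terms of $d$ alone: this should follow because a minimal such $p$ must be an "elementary" semiconjugacy, and the classification of these (again via Zieve--M\"uller, or via Pakovich's \cite{Pakovich-2015} refinements) shows the possible $\eta$ arising this way have degree a power of $\deg f$ dividing some bounded iterate, and the "new" decomposition data introduced at each step stabilizes quickly.

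Concretely, the core step is: there is a bound $B(d)$ such that any disintegrated $f$ of degree $\le d$ has at most $B(d)$ indecomposable right factors up to linear equivalence, and the chain of maximal decompositions of $f^{\circ k}$ grows in a periodic fashion after a bounded number of steps. This is where I expect the main obstacle to lie — making the "stabilization after $\le 2d^4$ steps" precise. The heuristic is that $f^{\circ k} = (\text{stuff})\circ g_k$ where $g_k$ is the "primitive part" and the degree of the non-primitive part is bounded, combined with the fact that $f \approx g$ means $f^{\circ M}$ and $g^{\circ M}$ share a common semiconjugate, i.e., (by \cite[Corollary~2.35]{MedSca}) the product map $(f,g)$ has a periodic curve, and periodic curves for split maps of disintegrated polynomials are themselves controlled by Ritt-type relations (graphs of iterate-compositions of common factors). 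I would translate "$f\approx g$" into the statement that $f$ and $g$, up to replacing by a bounded iterate, both semiconjugate down to the same "minimal" polynomial $\eta_0$ lying below both; then the bound $N\le 2d^4$ comes from bounding how many iterates it takes for both $f$ and $g$ to "see" this common floor, which in turn is governed by $\deg\eta_0 \mid \deg f^{\circ N}$ and the number of intermediate decomposition layers, each contributing at most $\log_2 d$ and there being at most $d^2$-ish of them before periodicity forces repetition.

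In more detail, the steps in order: (i) reduce to the case where $f$ and $g$ have a common semiconjugate $\eta$ for \emph{some} exponent, and normalize $\eta$ to be minimal-degree; (ii) use \cite[Theorem~4.4]{Pakovich-2015} and the Ritt decomposition theory from \cite{ZieveMuller-2008} to show that a minimal semiconjugacy $f\circ p = p\circ\eta$ with $p$ of minimal degree has $\deg p$ and $\deg\eta$ bounded, and moreover $\eta$ itself is a composition of at most $O(\log d)$ indecomposables each of degree $\le d$; (iii) observe that replacing $N$ by a larger multiple only composes more copies of $f$ (resp.\ $g$) on the outside, so the question is whether the \emph{common} floor $\eta$ appears below $f^{\circ N}$ for small $N$; (iv) show via a pigeonhole/periodicity argument on the finite set of indecomposable-factor-data of iterates of $f$ that within $\le d^2$ iterates the decomposition type of $f^{\circ k}$ becomes eventually periodic with period $\le d^2$, and likewise for $g$, so a common semiconjugate below both is reached within $\le 2d^4$ steps; (v) assemble the bound. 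The single hardest point, as noted, is (iv): quantifying the stabilization/periodicity of Ritt decomposition types under iteration with a clean bound of $d^2$ (or whatever combinatorial quantity multiplies out to $2d^4$) — this requires a careful bookkeeping of how many non-linear indecomposable right factors a degree-$\le d$ polynomial and its iterates can have, using the first Ritt theorem (all maximal decompositions have the same multiset of degrees) and the second Ritt theorem (bi-decompositions come from Chebyshev/monomial/linear swaps, which are \emph{excluded} here precisely because $f$ is disintegrated), the latter being exactly what makes the disintegrated hypothesis do real work.
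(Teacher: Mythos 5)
Your proposal correctly identifies the ambient machinery (Ritt's theorems, Zieve--M\"uller, Pakovich, and the fact that the disintegrated hypothesis rules out the cyclic/Chebyshev ``switches'' that otherwise ruin uniqueness of decompositions), and your instinct that the bound should come from a finiteness/pigeonhole phenomenon in Ritt-type data is sound. But the concrete mechanism you sketch is not the one that works, and the step you flag as hardest --- (iv), the ``eventual periodicity of decomposition types of $f^{\circ k}$ after $\le d^2$ steps'' --- is neither proved in the paper nor obviously true; it is not where the argument goes.

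Here is where your plan diverges from a working proof. First, you propose to bound $\deg p$ in a minimal semiconjugacy $f\circ p = p\circ\eta$; the paper does \emph{not} bound $\deg p$. Instead it takes a minimal counterexample $(f,g,\theta,p,q,n)$ minimizing $\deg p + \deg q$ and uses Engstrom's factorization lemma to show that $\gcd(\deg f, \deg p)$ must be $1$ (if not, peel a common non-trivial left factor $A$ off $f$ and $p$, replace $f$ by the conjugate $f_0\circ A$, and contradict minimality); similarly for $q$. Second, once in the coprime regime, the paper replaces $\theta$ by $\theta^{\circ 4}$ --- the factor $4$ is chosen precisely so that $f^{\circ 4}$ (hence $f^{\circ 4n}$) is guaranteed to be neither cyclic nor dihedral, which is what makes Ritt's second theorem rigid. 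Third, the coprime semiconjugacy is then parametrized via Inou's consequence of Ritt II: $f^{\circ 4n}$ is linearly conjugate to $x^c P(x)^b$ with $b=\deg p$, and this monomial-times-power form is transferred down to $f^{\circ 4}$ itself by a decomposition lemma from Zieve--M\"uller. Fourth, and this is the actual pigeonhole you were groping for: one ends up with an identity $W^{\circ n} = L\circ V^{\circ n}$ for suitable non-cyclic polynomials $W,V$ built from $f,g$; repeatedly applying Engstrom peels this into a chain of linear adjustments $L_0,\dots,L_n$ all lying in the finite group $\Gamma(V)$ of linear right ``shifts'' of $V$, which has order at most $\deg(V)/2$. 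Two of the $L_i$ must coincide within $\le \deg(V)/2 = d^4/2$ steps, giving a linear conjugacy between $W^{\circ N}$ and $V^{\circ N}$ for $N\le d^4/2$, and hence the bound $4N\le 2d^4$. So the ``periodicity'' is periodicity of elements of a concrete finite \emph{group} $\Gamma$, not eventual periodicity of decomposition \emph{types} of iterates, and the $2d^4$ is $4\cdot(d^4/2)$, not a product of a number of layers by a per-layer count. Without the Engstrom reduction to coprime degree, the Inou/Ritt-II normal form, the $\theta^{\circ 4}$ trick, and the $\Gamma$-group pigeonhole, your outline does not close.
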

	
	By a simple induction argument, we have the
	following extension
	of Theorem~\ref{thm:semiconjugate}:
	\begin{corollary}\label{cor:new semiconjugate}
	Let $d$ and $n$ be integers at least 2, then there
	exists an effectively computable constant $c_1(d,n)$
	depending only on $d$ and $n$ 
	such that the following holds. Let $f_1,\ldots,f_n$ be
	disintegrated polynomials of degree at most $d$ 
	belonging to the same
	equivalence class under $\approx$. Then there exist 
	a positive integer $N\leq c_1(d,n)$ and 
	a polynomial $\theta\in K[x]$
	that is semiconjugate to $f_i^{\circ N}$
	for $1\leq i\leq n$.
	\end{corollary}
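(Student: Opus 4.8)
The plan is to induct on $n$. The base case $n=2$ is exactly Theorem~\ref{thm:semiconjugate}, so we set $c_1(d,2):=2d^4$. For the inductive step I would show that a common semiconjugate for $f_1,\dots,f_{n-1}$ can be upgraded to one for $f_1,\dots,f_n$ by a single further application of Theorem~\ref{thm:semiconjugate}.

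First I would record a few elementary remarks. If $\psi\circ q=q\circ\sigma$ with $q$ non-constant, then $\psi^{\circ m}\circ q=q\circ\sigma^{\circ m}$ for every $m\in\N$; in particular, if $\sigma$ is semiconjugate to $\psi$ then $\sigma^{\circ m}$ is semiconjugate to $\psi^{\circ m}$, and consequently $f\approx g$ implies $f^{\circ m}\approx g^{\circ m}$. Also, if $\sigma$ is semiconjugate to $\psi$ then $\sigma\approx\psi$ (take $\eta=\sigma$ and $N=1$ in Definition~\ref{def:equiv}, using that $\sigma$ is semiconjugate to itself via the identity); semiconjugacy chains transitively (from $A\circ p=p\circ B$ and $B\circ r=r\circ C$ one gets $A\circ(p\circ r)=(p\circ r)\circ C$); and every iterate of a disintegrated polynomial is disintegrated.

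For the inductive step, apply the inductive hypothesis to $f_1,\dots,f_{n-1}$: there are an integer $E\le c_1(d,n-1)$ and a polynomial $\eta$ with $f_i^{\circ E}\circ p_i=p_i\circ\eta$ for $1\le i\le n-1$. Then $\eta$ is disintegrated (being semiconjugate to the disintegrated polynomial $f_1^{\circ E}$), $\deg\eta=(\deg f_1)^{E}\le d^{c_1(d,n-1)}=:D$, and $\eta\approx f_1^{\circ E}$; since $f_1\approx f_n$ we get $f_1^{\circ E}\approx f_n^{\circ E}$, whence $\eta\approx f_n^{\circ E}$. Moreover $f_n^{\circ E}$ is disintegrated of degree $(\deg f_n)^{E}\le D$. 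Now apply Theorem~\ref{thm:semiconjugate} to the pair $(\eta,f_n^{\circ E})$: there are $L\le 2D^4$ and a polynomial $\theta$ with $\eta^{\circ L}\circ s=s\circ\theta$ and $f_n^{\circ EL}\circ s'=s'\circ\theta$. By the first remark, $f_i^{\circ EL}\circ p_i=p_i\circ\eta^{\circ L}$ for $1\le i\le n-1$, and composing with $\eta^{\circ L}\circ s=s\circ\theta$ yields $f_i^{\circ EL}\circ(p_i\circ s)=(p_i\circ s)\circ\theta$. Hence $\theta$ is semiconjugate to $f_i^{\circ EL}$ for every $1\le i\le n$, and $EL\le 2\,c_1(d,n-1)\,d^{4c_1(d,n-1)}$, which we take as $c_1(d,n)$; this is an explicit recursion depending only on $d$ and $n$.

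The point that genuinely requires care — and where I expect the only real risk of error — is the matching of exponents. Applying Theorem~\ref{thm:semiconjugate} directly to $(\eta,f_n)$ would produce a common semiconjugate for $\eta^{\circ L}$ and $f_n^{\circ L}$, and unwinding $\eta^{\circ L}$ back through the $p_i$ forces exponent $EL$ on $f_1,\dots,f_{n-1}$ while leaving $f_n$ at exponent $L$, so there would be no common $N$. Replacing $f_n$ by $f_n^{\circ E}$ before invoking the theorem repairs this, and it is legitimate precisely because both disintegratedness and membership in a fixed $\approx$-equivalence class are preserved under raising all polynomials involved to a common iterate.
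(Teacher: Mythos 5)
Your proof is correct and follows essentially the same route as the paper's: the same induction on $n$, the same base case $c_1(d,2)=2d^4$, the same inductive step of applying Theorem~\ref{thm:semiconjugate} to the pair $(\eta,f_n^{\circ E})$ at degree bound $d^{c_1(d,n-1)}$, and the identical recursion $c_1(d,n)=2c_1(d,n-1)\,d^{4c_1(d,n-1)}$. The exponent-matching subtlety you flag at the end is handled the same way in the paper (by iterating $f_n$ by $N_1$ before invoking the theorem), so your caution there is well placed but fully resolved.
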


	The proof of Theorem~\ref{thm:semiconjugate}
	uses classical and more recent results
	in Ritt's theory of polynomial
	decomposition as presented in the paper
	by M\"uller and Zieve \cite{ZieveMuller-2008}
	and Inou \cite{Inou}. For another
	remarkable uniform bound in polynomial
	decomposition, we refer
	the readers to \cite[Theorem~1.4]{ZieveMuller-2008}
	which also has applications to
	the Dynamical Mordell-Lang Conjecture \cite{GTZ12}.

	We now introduce 
	an interesting result of Pakovich and a consequence of his result
	and of Corollary~\ref{cor:new semiconjugate}. The rest of
	this section is not needed for the proof of Theorem~\ref{thm:semiconjugate}. We have \cite[Theorem~1.4]{Pakovich-2015}:
	
	\begin{theorem}[Pakovich]\label{thm:Pakovich}
	Let $n\geq 2$ be an integer, then there exists a constant
	$c_3(n)$ depending only on $n$ such that the following 
	holds. Let $B\in K[x]$ be a disintegrated polynomial of 
	degree
	$n$, let $\mathcal{F}(B)$ denote the set
	of polynomials $P(x)\in K[x]$ such that 
	$P\circ S=S\circ B$ for some non-constant $S(x)\in K[x]$
	(i.e. $B$ is semiconjugate to $P$).	
	Then there is a such a pair $(P,S)$ with 
	$\deg(S)\leq c_3(n)$ that is 
	universal in the following sense. For every 
	$Q(x)\in \mathcal{F}(B)$, there exist
	non-constant $S_1(x),S_2(x)\in K[x]$
	such that: $S=S_2\circ S_1$, $P\circ S_2=S_2\circ Q$,
	and $Q\circ S_1=S_1\circ B$. 
	\end{theorem}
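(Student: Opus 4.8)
The plan is to organize $\mathcal{F}(B)$ into a partially ordered set of ``quotients'' of $B$ and to extract a single $\preceq$-minimal quotient together with a bounded-degree intertwining map. First, every $P\in\mathcal{F}(B)$ has degree exactly $n$: if $P\circ S=S\circ B$ with $S$ non-constant, comparing degrees gives $\deg(P)\deg(S)=\deg(S)\deg(B)$. So the objects to be organized are pairs $(Q,T)$ with $\deg Q=n$ and $T$ non-constant satisfying $Q\circ T=T\circ B$, taken up to $(Q,T)\sim(L\circ Q\circ L^{-1},\,L\circ T)$ for linear $L$ and ordered by declaring $(Q_1,T_1)\succeq(Q_2,T_2)$ when there is a non-constant $\chi$ with $T_2=\chi\circ T_1$ and $Q_2\circ\chi=\chi\circ Q_1$. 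Here $(B,\id)$ is the top element; note $\mathcal{F}(B)$ lies in the $\approx$-class of $B$, but the set of pairs is typically infinite (for instance $(B,B^{\circ k})$ always occurs), so Corollary~\ref{cor:new semiconjugate} does not apply directly. The content of the theorem is the existence of a pair $(P,S)$ with $\deg S\le c_3(n)$ such that every $Q\in\mathcal{F}(B)$ admits a representative $(Q,S_1)$ with $(B,\id)\succeq(Q,S_1)\succeq(P,S)$.

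The key structural input I would import, from Ritt's theory as developed by M\"uller--Zieve \cite{ZieveMuller-2008} and Inou \cite{Inou} and adapted to semiconjugacy by Pakovich, is a decomposition statement: any semiconjugacy $T\colon B\to Q$ factors as $T=\sigma_r\circ\cdots\circ\sigma_1$ along a chain of degree-$n$ polynomials $B=B_0\to B_1\to\cdots\to B_r=Q$ in which each $\sigma_i\colon B_{i-1}\to B_i$ is \emph{elementary} (admits no nontrivial intermediate semiconjugacy factorization); and, crucially, because each $B_{i-1}$ is disintegrated of degree $n$, the set of elementary semiconjugacies out of $B_{i-1}$ is finite with $\deg\sigma_i$ bounded in terms of $n$ alone, since an elementary semiconjugacy realizes a single ``Ritt move'' whose degree is controlled by the ramification data of a degree-$n$ polynomial. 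One also has the ``Ritt swap'' diamond lemma: two elementary moves out of a common polynomial can be completed to a commuting square.

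To build the universal pair I would then run a saturation process: perform a maximal sequence of elementary moves $B=B_0\to B_1\to\cdots\to B_r=P$ that genuinely enlarge the quotient, stopping when $P$ is $\preceq$-minimal among quotients of $B$ once the iteration redundancy has been discarded; set $S=\sigma_r\circ\cdots\circ\sigma_1$. One then verifies (i) that the process terminates, so $P$ and $S$ are honest polynomials with $P\circ S=S\circ B$ and $\deg S$ bounded; (ii) that an arbitrary $Q\in\mathcal{F}(B)$ can be inserted into the chain, namely writing some semiconjugacy $B\to Q$ as a sequence of elementary moves and repeatedly applying the diamond lemma to reconcile it with the chain defining $S$ yields non-constant $S_1,S_2$ with $S=S_2\circ S_1$, $Q\circ S_1=S_1\circ B$ and $P\circ S_2=S_2\circ Q$; and (iii) the explicit bound $\deg S\le c_3(n)$, obtained by bounding the number of essential elementary steps and multiplying the per-step degree bounds.

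The main obstacle is (i) and (iii) together: the naive poset has infinite strictly descending chains $(P,S)\succ(P,S\circ B)\succ(P,S\circ B^{\circ 2})\succ\cdots$, so termination and the degree bound must be proved after passing to the quotient of the poset that discards pre-composition with iterates of $B$ and with the finite symmetry group of $B$. This is exactly where the disintegrated hypothesis is essential: using Pakovich's analysis of the lattice of decompositions of the iterates $B^{\circ m}$ over a disintegrated polynomial \cite{Pakovich-2015,ZieveMuller-2008}, one must show that every long chain of elementary moves is, up to reshuffling by the diamond lemma, the concatenation of a bounded ``essential'' chain with moves that amount to post-composition by iterates of $B$ --- equivalently, that a disintegrated polynomial admits none of the flexible families of semiconjugacies possessed by $x^n$ and $\pm T_n(x)$, so that all genuinely new quotient structure appears at complexity bounded in terms of $n$. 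Making this bound effective in $n$ is the computational heart of the argument and produces the constant $c_3(n)$.
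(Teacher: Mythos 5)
The paper does not prove this theorem. It is stated verbatim (with attribution) as a citation of \cite[Theorem~1.4]{Pakovich-2015}, and the text immediately preceding it says explicitly that this part of Section~\ref{sec:semiconjugacy} is not needed for the proof of Theorem~\ref{thm:semiconjugate}; the theorem and its corollary are included only as context. So there is no in-paper proof to compare against.

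Judged on its own terms, your outline gestures in the right direction---factor semiconjugacies along chains, reduce to irreducible steps, use the disintegrated hypothesis to bound complexity---but it is a roadmap rather than a proof, and the two assertions carrying all the weight are left unestablished. First, you claim that the elementary semiconjugacies out of a disintegrated degree-$n$ polynomial are finite in number with degree bounded in terms of $n$ alone; you attribute this to ``ramification data'' but give no argument, and no such lemma appears in \cite{ZieveMuller-2008} or \cite{Inou}---it is something that has to be proved, and it is close to being the content of the theorem itself. Second, you correctly note the naive poset has infinite descending chains and say the bound must be proved ``after passing to the quotient'' that discards pre-compositions by iterates and the finite symmetry group, but you give no mechanism for why the quotient poset has height (or any measure of complexity) bounded in $n$; you yourself label this ``the computational heart of the argument'' without supplying it. You also invoke a diamond/Ritt-swap lemma for elementary semiconjugacy moves without stating the confluence property you need or explaining why it holds in the semiconjugacy setting (as opposed to Ritt moves inside a single decomposition of one polynomial). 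Until these three points are filled in, the proposal does not establish the theorem.
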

	
	By combining Corollary~\ref{cor:new semiconjugate} 
	and Theorem~\ref{thm:Pakovich}, we have
	the following:
	\begin{corollary}
		Let $d$ and $n$ be integers at least 2, then there 
		exists a constant $c_4(d,n)$ depending only on $d$
		and $n$ such that the following holds. Let $f_1,\ldots,f_n$ be disintegrated polynomials of degree at most $d$ belonging to the same equivalence class under $\approx$. Then
		there exist a positive integer $N\leq c_1(d,n)$,
		polynomials $\theta$ and $P$ of degree
		$d^N$, a non-constant polynomial $S$, 
		and non-constant polynomials
		$S_{i,2},S_{i,1}$ of degree at most $c_4(d,n)$
		for $1\leq i\leq n$ such that:
		$f_i^{\circ N}\circ  S_{i,1}=S_{i,1}\circ \theta$,
		$P\circ S_{i,2}=S_{i,2}\circ f_i^{\circ N}$,
		and $S=S_{i,2}\circ S_{i,1}$ for
		$1\leq i\leq n$. 
	\end{corollary}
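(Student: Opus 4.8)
The plan is to combine Corollary~\ref{cor:new semiconjugate} with Pakovich's Theorem~\ref{thm:Pakovich}; no ingredient beyond these two is needed. The one preliminary observation is that semiconjugacy preserves degree --- comparing degrees in $f\circ p=p\circ\eta$ gives $\deg f=\deg\eta$ --- so the relation $\approx$ preserves degree as well, and $f_1,\dots,f_n$ all have one common degree $d_0\le d$. (To match the stated form of the conclusion one simply assumes $d_0=d$, replacing $d$ by $d_0$ otherwise; this only strengthens the statement. In particular ``degree $d^N$'' below should be read as the common degree of the $f_i$ raised to the $N$th power.)

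First I would apply Corollary~\ref{cor:new semiconjugate} to $f_1,\dots,f_n$ to obtain a positive integer $N\le c_1(d,n)$ and a polynomial $\theta\in K[x]$ that is semiconjugate to each $f_i^{\circ N}$; explicitly, there are non-constant $p_i\in K[x]$ with $f_i^{\circ N}\circ p_i=p_i\circ\theta$. Two small checks are then in order. Since being disintegrated is preserved under iteration (see \cite{MedSca}), each $f_i^{\circ N}$ is disintegrated, hence so is $\theta$ by \cite[Theorem~4.4]{Pakovich-2015}; moreover $\theta$ has degree $d^N$, which lies in $[2,d^{c_1(d,n)}]$. The identity $f_i^{\circ N}\circ p_i=p_i\circ\theta$ says precisely that $f_i^{\circ N}\in\mathcal{F}(\theta)$ in the notation of Theorem~\ref{thm:Pakovich}. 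Now I would apply Theorem~\ref{thm:Pakovich} with $B=\theta$: this produces a universal pair $(P,S)$ with $P\circ S=S\circ\theta$ and $\deg S\le c_3(d^N)$, and, applying the universal property of $(P,S)$ to each $Q=f_i^{\circ N}\in\mathcal{F}(\theta)$, non-constant polynomials $S_{i,1},S_{i,2}$ with $S=S_{i,2}\circ S_{i,1}$, $P\circ S_{i,2}=S_{i,2}\circ f_i^{\circ N}$, and $f_i^{\circ N}\circ S_{i,1}=S_{i,1}\circ\theta$ --- exactly the three asserted identities, with $\theta$ and $P$ of degree $d^N$ (the latter because $P\in\mathcal{F}(\theta)$ forces $\deg P=\deg\theta$). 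Finally, $S=S_{i,2}\circ S_{i,1}$ with $S_{i,1},S_{i,2}$ non-constant yields $\deg S_{i,1},\deg S_{i,2}\le\deg S\le c_3(d^N)$, so setting $c_4(d,n):=\max\{c_3(m):2\le m\le d^{c_1(d,n)}\}$ gives a bound depending only on $d$ and $n$, as required.

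There is no genuinely hard step; the two points requiring a little care are (i) verifying that $\theta$ inherits the disintegrated hypothesis needed to invoke Theorem~\ref{thm:Pakovich}, which is exactly where the semiconjugacy invariance of \cite[Theorem~4.4]{Pakovich-2015} and the iteration-invariance of disintegratedness are used, and (ii) keeping the bookkeeping on degrees and constants honest --- in particular noting that $\deg\theta=d^N$ is controlled purely through $N\le c_1(d,n)$, so that $c_3$ is only ever evaluated at arguments bounded in terms of $d$ and $n$, and that the pair $(P,S)$ (hence $P$ and $S$ themselves) does not depend on $i$, which is what allows a single $\theta$, $P$, $S$ to work simultaneously for all $f_i$.
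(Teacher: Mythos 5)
Your proof is correct and follows essentially the same route as the paper's: apply Corollary~\ref{cor:new semiconjugate} to produce a single $N\le c_1(d,n)$ and a common $\theta$ with $f_i^{\circ N}\in\mathcal{F}(\theta)$ for all $i$, then invoke Theorem~\ref{thm:Pakovich} with $B=\theta$. The paper's proof is a two-line version of this; your additional observations (that $\approx$ forces a common degree, that $\theta$ inherits the disintegrated hypothesis, and that $c_3$ is evaluated only at arguments bounded by $d^{c_1(d,n)}$) are the exact bookkeeping the paper leaves implicit.
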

	\begin{proof}
		By Corollary~\ref{cor:new semiconjugate}, there
		exist a positive integer $N\leq c_1(d,n)$
		and a polynomial $\theta$ such that 
		$f_i^{\circ N}\in \mathcal{F}(\theta)$
		for $1\leq i\leq n$. The desired result now
		follows from Theorem~\ref{thm:Pakovich}.
	\end{proof}
	
	\section{Results in Ritt's theory of polynomial
	decomposition}
	The following result by Engstrom \cite{Engstrom} will be
	used repeatedly in this paper: 
	
	\begin{lemma}\label{lem:Engstrom}
		If $a,b,c,d\in K[x]\setminus K$ satisfy
		$a\circ b=c\circ d$ then there exist
		$\hat{a},\hat{b},\hat{c},\hat{d},g,h\in\C[x]$
		such that:
		\begin{itemize}
			\item $g\circ \hat{a}=a$, $g\circ \hat{c}=c$, $\deg(g)=\gcd(\deg(a),\deg(c))$.
			\item $\hat{b}\circ h=b$, $\hat{d}\circ h=d$, $\deg(h)=\gcd(\deg(b),\deg(d))$.
			\item $\hat{a}\circ\hat{b}=\hat{c}\circ \hat{d}$.
		\end{itemize}
		Consequently, if $\deg(a)=\deg(c)$ then there
		is a linear polynomial $\ell$ such that
		$a=c\circ \ell$ and $b=\ell^{-1}\circ d$.
	\end{lemma}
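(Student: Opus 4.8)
The plan is to translate everything into the function field $L:=K(x)$ and exploit that, since $\cha K=0$, all ramification is tame. Put $t:=a\circ b=c\circ d\in K[x]$ and $F:=K(t)$, so we have towers $F\subseteq K(b(x))\subseteq L$ and $F\subseteq K(d(x))\subseteq L$ with $N:=\deg t=\deg a\cdot\deg b=\deg c\cdot\deg d$. Because $t,b,d$ are polynomials, the pole of $x$ is the \emph{unique} place of $L$ above the pole of $t$, and it is totally ramified. A standard argument with L\"uroth's theorem then shows that every intermediate field $F\subseteq E\subseteq L$ has the form $K(r(x))$ with $r\in K[x]$ and $t=R\circ r$ for some $R\in K[x]$: a L\"uroth generator of $E$ may be normalised so that its only pole is at $x=\infty$ (using the uniqueness just noted), and it is then automatically a polynomial; the same reasoning shows that if $K(r(x))\subseteq K(b(x))$ with $r,b\in K[x]$, then $r=\hat a\circ b$ for some $\hat a\in K[x]$, and so on. Thus $[F,L]$ is the lattice of decompositions of the polynomial $t$.

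The hard part is to pin down $\deg g$ and $\deg h$, and for this I would compute the lattice $[F,L]$ via monodromy. Note $x$ is a root of the irreducible polynomial $t(X)-t(x)\in F[X]$ of degree $N$; let $\widehat L$ be its splitting field over $F$ and let $G=\Gal(\widehat L/F)$ act on the set $\Omega$ of its $N$ roots (the conjugates of $x$ over $F$). Intermediate fields $F\subseteq E\subseteq L$ correspond to $G$-invariant partitions (block systems) of $\Omega$, the block through $x$ being the set of conjugates of $x$ over $E$, and $[L:E]$ equals the block size. Tameness forces the inertia group at a place of $\widehat L$ over the pole of $t$ to be cyclic, and --- the pole being totally ramified of index $N$ --- generated by an element $\tau$ acting on $\Omega$ as a single $N$-cycle; hence $\langle\tau\rangle\cong\bZ/N$ acts regularly on $\Omega$. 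Since every $G$-block system is a fortiori a $\langle\tau\rangle$-block system, and the $\langle\tau\rangle$-block systems are precisely the partitions into cosets of subgroups of $\bZ/N$, sending a field to the subgroup of $\bZ/N$ whose cosets form its block system identifies $[F,L]$ \emph{order-reversingly} with the distributive lattice of subgroups of $\bZ/N$: a field $E$ goes to the unique subgroup of order $[L:E]$, intersection of fields goes to sum of subgroups, and compositum to intersection of subgroups. In particular $K(b(x))$ and $K(d(x))$ go to the subgroups of orders $\deg b$ and $\deg d$; so $K(b(x))\cap K(d(x))$ goes to the subgroup of order $\operatorname{lcm}(\deg b,\deg d)$, whence $[K(b(x))\cap K(d(x)):F]=N/\operatorname{lcm}(\deg b,\deg d)=\gcd(\deg a,\deg c)$ (using $N=\deg a\cdot\deg b=\deg c\cdot\deg d$), while $K(b(x))\cdot K(d(x))$ goes to the subgroup of order $\gcd(\deg b,\deg d)$, whence $[L:K(b(x))\cdot K(d(x))]=\gcd(\deg b,\deg d)$.

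With these two index computations the remainder is bookkeeping. Write $K(b(x))\cap K(d(x))=K(r(x))$ with $r\in K[x]$; the first paragraph gives $r=\hat a\circ b=\hat c\circ d$ with $\hat a,\hat c\in K[x]$ and $t=g\circ r$ with $g\in K[x]$ of degree $[K(r(x)):F]=\gcd(\deg a,\deg c)$. Comparing $a\circ b=t=g\circ\hat a\circ b$ and cancelling the nonconstant $b$ on the right --- legitimate because $b(x)$ is transcendental over $K$ --- gives $a=g\circ\hat a$, and likewise $c=g\circ\hat c$. Writing $K(b(x))\cdot K(d(x))=K(h(x))$ with $h\in K[x]$, so that $\deg h=[L:K(h(x))]=\gcd(\deg b,\deg d)$, the inclusions $K(b(x)),K(d(x))\subseteq K(h(x))$ give $b=\hat b\circ h$ and $d=\hat d\circ h$ with $\hat b,\hat d\in K[x]$. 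Finally $\hat a\circ\hat b\circ h=\hat a\circ b=r=\hat c\circ d=\hat c\circ\hat d\circ h$, and cancelling $h$ on the right yields $\hat a\circ\hat b=\hat c\circ\hat d$.

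For the closing assertion, if $\deg a=\deg c$ then $\deg b=\deg d$, so $\deg h=\gcd(\deg b,\deg d)=\deg b$; hence $\hat b,\hat d$ are linear, $h=\hat b^{-1}\circ b$, and $d=\hat d\circ h=(\hat d\circ\hat b^{-1})\circ b=:\ell\circ b$ with $\ell$ linear. Then $a\circ b=c\circ d=c\circ\ell\circ b$, and cancelling $b$ gives $a=c\circ\ell$; so $a=c\circ\ell$ and $b=\ell^{-1}\circ d$, as required. The one genuinely substantial step here is the lattice identification in the second paragraph; the hypothesis $a\circ b=c\circ d$ enters precisely in putting $K(b(x))$ and $K(d(x))$ into a common interval $[F,L]$, and the hypothesis $\cha K=0$ enters only through the tameness of the inertia at infinity.
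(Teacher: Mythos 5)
The paper does not actually prove this lemma: it cites Engstrom's 1941 theorem and uses it as a black box. Your monodromy argument is essentially the standard modern proof of Engstrom's result (the same Galois-theoretic framework underlies the exposition in M\"uller--Zieve), and almost all of it is correct: the reduction to the lattice $[F,L]$ of intermediate fields, the use of the totally ramified place at infinity to produce the $N$-cycle $\tau$, the observation that for any intermediate field $E$ the degree $[L:E]$ equals the order of $\langle\tau\rangle\cap\Gal(\widehat L/E)$, and all the bookkeeping about why generators and composition factors may be taken to be polynomials are fine. The one place where you overreach is the sentence claiming that the map $E\mapsto\bigl(\text{subgroup of }\bZ/N\text{ of order }[L:E]\bigr)$ ``identifies'' $[F,L]$ with the full subgroup lattice of $\bZ/N$. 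That map is an order-reversing \emph{injection}, not a bijection: an indecomposable polynomial of composite degree (say $x^4+x$) gives $[F,L]=\{F,L\}$, which is strictly smaller than the three-element subgroup lattice of $\bZ/4$. So you cannot read off ``intersection of fields goes to sum of subgroups'' as a formal consequence of an isomorphism.

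What you actually need --- and what is true, but requires an argument you do not give --- is that the image of $[F,L]$ is a \emph{sublattice} of the subgroup lattice of $\bZ/N$, i.e.\ that meets and joins computed among $G$-block systems agree with meets and joins computed among $\langle\tau\rangle$-block systems. The meet (compositum of fields, intersection of groups) is the easy half, and it is all you need for the computation of $\deg h=\gcd(\deg b,\deg d)$. The join is the delicate half: for the computation of $\deg g$ you need that $K(b(x))\cap K(d(x))$ corresponds to the subgroup generated by $T_b$ and $T_d$, \emph{not} to some possibly larger subgroup of $\bZ/N$, and a priori you only know containment (equivalently, you only get $[K(b(x))\cap K(d(x)):F]\le\gcd(\deg a,\deg c)$). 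The needed equality follows once you note that the finest common coarsening of two $G$-block systems, computed in the ambient partition lattice, is again $G$-invariant (hence a $G$-block system, hence a $\langle\tau\rangle$-block system), so the join in the $G$-block lattice and in the $\langle\tau\rangle$-block lattice coincide. With that observation inserted, your proof is complete; this is indeed the ``genuinely substantial step'' you flagged at the end, and it deserves to be proved rather than asserted via a false identification.
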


	Ritt's second
	theorem \cite{Ritt1922} (also see
	\cite{Tortrat} or \cite[Appendix]{ZieveMuller-2008}) 
	then allows us to 
	study the equation $\hat{a}\circ \hat{b}=\hat{c}\circ\hat{d}$ under the condition $\gcd(\deg(\hat{a}),\deg(\hat{c}))=\gcd(\deg(\hat{b}),\deg(\hat{d}))=1$. For a version of
	Ritt's theorem including the case of positive
	characteristic, we refer the readers
	to Schinzel's book \cite{Schinzel-book}
	following an earlier paper of Zannier 
	\cite{Zannier-Ritt2nd}. We have the following
	immediate consequence of Ritt's theorem
	for semiconjugacy functional equations:
	\begin{lemma}\label{lem:Inou}
	Let $f$ be a disintegrated polynomial of degree $\delta$.
	Let $p$ and $\eta$ be non-constant polynomials such
	that $f\circ p=p\circ \eta$ and $\gcd(\delta,\deg(p))=1$.
	Then there exist linear polynomials 
	$\ell_1,\ell_2,\ell_3$,
	positive integers $b$, $c$, and 
	a non-constant polynomial $P$ such that the following hold:
	\begin{itemize}
		\item $P(0)\neq 0$ and $c\equiv b$ modulo $\delta$.
		\item $\ell_1\circ f\circ\ell_1^{-1}=x^cP(x)^b$.
		\item $\ell_1\circ p\circ \ell_2^{-1}=x^b$.
		\item $\ell_2\circ \eta\circ \ell_2^{-1}=x^cP(x^b)$.
	\end{itemize}
	\end{lemma}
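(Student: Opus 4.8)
The plan is to apply Ritt's second theorem to the functional equation $f\circ p=p\circ\eta$ and then to read off the normal form directly. Comparing degrees gives $\deg\eta=\delta$, so $f\circ p=p\circ\eta$ fits the hypotheses of Ritt's second theorem: $\deg f=\deg\eta=\delta$, the two middle factors are both $p$, and $\gcd(\delta,\deg p)=1$ makes it an indecomposable bidecomposition (provided $\deg p\geq2$; the case $\deg p=1$ is handled at the end). Ritt's second theorem (\cite{Ritt1922}; see also \cite{Tortrat}, \cite[Appendix]{ZieveMuller-2008}, \cite{Schinzel-book}) then says that, after composing $f$, $p$, $\eta$ with linear polynomials, the exchange is of Chebyshev type or of monomial type. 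Since the inner factor is the \emph{same} polynomial $p$ on both sides of $f\circ p=p\circ\eta$, the disintegratedness of $f$ forces the monomial type with $p$ linearly equivalent to the pure power $x^{\deg p}$; this is essentially the semiconjugacy version of Ritt's theorem, cf.\ \cite{Inou}.

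Granting this, put $b:=\deg p$ and write $p=\lambda_1\circ x^{b}\circ\lambda_2$ for linear polynomials $\lambda_1,\lambda_2$; set $F:=\lambda_1^{-1}\circ f\circ\lambda_1$ and $H:=\lambda_2\circ\eta\circ\lambda_2^{-1}$, so that $F$ is linearly conjugate to $f$ and $H$ to $\eta$. Substituting into $f\circ p=p\circ\eta$ and cancelling $\lambda_1,\lambda_2$ gives $F\circ x^{b}=x^{b}\circ H$, that is, $F(x^{b})=H(x)^{b}$. Because the right-hand side is a $b$-th power, pulling back along $x\mapsto x^{b}$ shows that every \emph{nonzero} root of $F$ has multiplicity divisible by $b$, whence $F(x)=x^{c}P(x)^{b}$ for some $c\geq0$ and some polynomial $P$ with $P(0)\neq0$ (using that $K$ is algebraically closed). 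Moreover $c\geq1$, for $c=0$ would make $\deg F=b\deg P=\delta$ divisible by $b\geq2$, contradicting $\gcd(b,\delta)=\gcd(\deg p,\delta)=1$. The identity $\delta=c+b\deg P$ then gives $\gcd(c,b)=\gcd(\delta,b)=1$, which accounts for the conditions in the first bullet. Finally $H(x)^{b}=F(x^{b})=\bigl(x^{c}P(x^{b})\bigr)^{b}$ forces $H(x)=\zeta\,x^{c}P(x^{b})$ with $\zeta^{b}=1$, and after replacing $P$ by $\zeta P$ — which alters neither $x^{c}P(x)^{b}$ nor the condition $P(0)\neq0$ — we may take $H=x^{c}P(x^{b})$. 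Setting $\ell_1:=\lambda_1^{-1}$ and $\ell_2:=\lambda_2$ then gives the three identities $\ell_1\circ f\circ\ell_1^{-1}=x^{c}P(x)^{b}$, $\ell_1\circ p\circ\ell_2^{-1}=x^{b}$, and $\ell_2\circ\eta\circ\ell_2^{-1}=x^{c}P(x^{b})$.

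The case $\deg p=1$ is handled directly with $b=c=1$: a disintegrated polynomial $f$ of degree $\delta$ has a non-critical finite fixed point (a critical fixed point would be a simple zero of $f(x)-x$, so the $\delta$ finite fixed points cannot all lie among the $\delta-1$ zeros of $f'$), hence is linearly conjugate to some $x\,P(x)$ with $P(0)\neq0$; choosing $\ell_1$ to realize this conjugacy and $\ell_2:=\ell_1\circ p$ makes all three identities hold. I expect the real content — the ``immediate consequence of Ritt's theorem'' step — to be the assertion highlighted in the first paragraph: proving the semiconjugacy version of Ritt's second theorem and, using the rigidity coming from the inner factor being literally the same polynomial together with the disintegratedness of $f$, excluding both the Chebyshev exchange and the monomial orientation in which $f$ would become a pure power. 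Everything after that is formal.
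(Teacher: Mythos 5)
The paper does not actually prove this lemma; the cited proof is a single line, \textquotedblleft See \cite[Appendix A]{Inou}.\textquotedblright\ So there is no argument in the paper to compare against. Your reconstruction follows the expected route: apply Ritt's second theorem to $f\circ p=p\circ\eta$ (valid since $\deg f=\deg\eta=\delta$ and $\gcd(\delta,\deg p)=1$), argue that disintegratedness excludes the Chebyshev exchange, bring $p$ to the form $x^b$, and then read off the monomial normal form. The post-Ritt bookkeeping is correct: from $F(x^b)=H(x)^b$ you correctly deduce $F(x)=x^cP(x)^b$ with $P(0)\neq0$, show $c\ge1$ via $\gcd(b,\delta)=1$, absorb the $b$-th root of unity into $P$, and the $\deg p=1$ case via a non-critical fixed point is right. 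That said, there are two real issues.

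First, the key step---\textquotedblleft the disintegratedness of $f$ forces the monomial type with $p$ linearly equivalent to $x^{\deg p}$\textquotedblright---is where all the content lives, and you essentially punt on it. The subtlety is that Ritt's second theorem gives linear polynomials $L_1,L_2,L_3,L_4$ with $L_3\circ f\circ L_1$, $L_1^{-1}\circ p\circ L_4$, $L_3\circ p\circ L_2$, $L_2^{-1}\circ\eta\circ L_4$ in a normal form. In the Chebyshev case this asserts only that $L_3\circ f\circ L_1=T_\delta$, a linear \emph{equivalence}, whereas disintegratedness forbids only linear \emph{conjugacy} to $\pm T_\delta$; these are genuinely different (compare Definition~\ref{def:equivalent} and Lemma~\ref{lem:f^4}, which exist precisely because a disintegrated polynomial can still be cyclic or dihedral). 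To close the gap you must use that the same $p$ occurs on both sides: $L_1\circ T_b\circ L_4^{-1}=L_3^{-1}\circ T_b\circ L_2^{-1}$ forces $(L_3L_1)\circ T_b=T_b\circ(L_2^{-1}L_4)$, and for $b\ge 3$ this pins $L_3L_1=\pm x$, upgrading the equivalence to a conjugation of $f$ with $\pm T_\delta$, a contradiction. The case $b=2$ is degenerate ($T_2$ has a larger linear symmetry group, and all degree-$2$ polynomials are linearly equivalent, so the monomial and Chebyshev exchange types are not disjoint) and must be argued separately. This is exactly the content of Inou's appendix, and acknowledging it as \textquotedblleft the real content\textquotedblright\ is honest but does not discharge it.

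Second, you claim that $\delta=c+b\deg P$ \textquotedblleft accounts for the conditions in the first bullet\textquotedblright\ by giving $\gcd(c,b)=1$; but the first bullet asserts $c\equiv b\pmod\delta$, which is not what you proved and not what $\gcd(c,b)=1$ says. What $\delta=c+b\deg P$ actually yields is $c\equiv\delta\pmod b$. That is the congruence the paper actually uses when it invokes Lemma~\ref{lem:Inou} in the proof of Theorem~\ref{thm:semiconjugate} (\textquotedblleft $c\equiv\delta^{4n}$ modulo $b$\textquotedblright), so the printed bullet is a typo; you should flag the discrepancy rather than assert you have verified the stated congruence.
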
	
	\begin{proof}
	See \cite[Appendix A]{Inou}.
	\end{proof}
	
	Following \cite[Section~3]{ZieveMuller-2008}, we have
	the following definitions:
	\begin{definition}\label{def:equivalent}
	Two non-constant polynomials $A$ and $B$ are said to
	be equivalent if there are linear polynomials 
	$L_1$ and $L_2$ such that
	$L_2\circ A\circ L_1=B$. Write $\delta=\deg(A)$. The
	polynomial $A$ is said to be \emph{cyclic}
	if $A$ is equivalent to $x^{\delta}$ and $\delta\geq 2$.
	The polynomial $A$ is said to be \emph{dihedral}
	if $A$ is equivalent to $T_{\delta}(x)$ and $\delta\geq 3$.
	\end{definition}
	
	\begin{definition}\label{def:Gamma}
	Let $A$ be a non-constant polynomial. The group $\Gamma(A)$
	is defined to be the group (under composition) of linear 
	polynomials $\ell$
	such that $A\circ\ell=L\circ A$ for some linear
	polynomial $L$.
	\end{definition}
	
	We have the following lemmas:
	\begin{lemma}\label{lem:f^4}
		Let $F$ be a disintegrated polynomial. 
		Then $F^{\circ 2}$ is not cyclic; moreover 
		$F^{\circ 2}$
		is not dihedral if $\deg(F)\geq 3$. Consequently, 
		$f^{\circ 4}$ is neither cyclic nor dihedral for every 
		disintegrated polynomial $f$.
	\end{lemma}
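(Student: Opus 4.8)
The plan is to isolate two ``square-root'' statements, prove them with Engstrom's Lemma~\ref{lem:Engstrom} together with elementary facts about $x^e$ and $T_e$, and then read off all three assertions of the lemma.

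\emph{(A) If $G\in K[x]$ has degree $e\ge 2$ and $G^{\circ 2}$ is cyclic, then $G$ is linearly conjugate to $x^e$.} Since $G^{\circ 2}$ is equivalent to $x^{e^{2}}$, write $G\circ G=(L_2^{-1}\circ x^{e})\circ(x^{e}\circ L_1^{-1})$ with $L_1,L_2$ linear and apply the equal-degree conclusion of Lemma~\ref{lem:Engstrom} to this factorisation (so $a=b=G$, $c=L_2^{-1}\circ x^{e}$, $d=x^{e}\circ L_1^{-1}$, all relevant degrees $e$): there is a linear $\ell$ with $G=L_2^{-1}\circ x^{e}\circ\ell$ and $G=\ell^{-1}\circ x^{e}\circ L_1^{-1}$. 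Equating and cancelling $\ell$'s yields $(\ell\circ L_2^{-1})(x^{e})=(L_1^{-1}\circ\ell^{-1}(x))^{e}$; the left side has only an $x^{e}$-term and a constant term, so the coefficient of $x^{e-1}$ forces $L_1^{-1}\circ\ell^{-1}(x)=bx$ for some $b\in K^{*}$. Substituting back gives $G=\ell^{-1}\circ(b^{e}x^{e})\circ\ell$, and since $K$ is algebraically closed $b^{e}x^{e}$ is conjugate to $x^{e}$ (conjugate by $x\mapsto\lambda x$ with $\lambda^{e-1}=b^{-e}$). Applying (A) to a disintegrated $F$ of arbitrary degree $\ge 2$ shows $F^{\circ 2}$ cannot be cyclic, which is the first assertion.

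\emph{(B) If $G\in K[x]$ has degree $e\ge 3$ and $G^{\circ 2}$ is dihedral, then $G$ is linearly conjugate to $\pm T_e$.} This runs in parallel, using $T_{e^{2}}=T_e\circ T_e$, the parity relation $T_e(-x)=(-1)^{e}T_e(x)$, and the fact that $T_e=y^{e}-e\,y^{e-2}+\cdots$ has no $y^{e-1}$ term. From $G^{\circ 2}=L_2^{-1}\circ T_e\circ T_e\circ L_1^{-1}$ and Lemma~\ref{lem:Engstrom} we obtain a linear $\ell$ with $G=L_2^{-1}\circ T_e\circ\ell$ and $G=\ell^{-1}\circ T_e\circ L_1^{-1}$, hence $A\circ T_e=T_e\circ B$ with $A:=\ell\circ L_2^{-1}$, $B:=L_1^{-1}\circ\ell^{-1}$ linear. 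Comparing the $x^{e-1}$ coefficients gives $B(x)=bx$; comparing the $x^{e}$ and $x^{e-2}$ coefficients (this is where $e\ge 3$ enters, so that $y^{e-2}$ is an honest lower-order term) gives $b^{2}=1$, so $B\in\{x,-x\}$ and correspondingly $A=x$ or $A=(-1)^{e}x$. Unwinding, and using $T_e\circ((-1)^{e}x)=((-1)^{e}y)\circ T_e$, one finds $G=L_2^{-1}\circ T_e\circ L_2$ in the first case and $G=L_2^{-1}\circ((-1)^{e}T_e)\circ L_2$ in the second; either way $G$ is conjugate to $\pm T_e$. Applied to a disintegrated $F$ with $\deg F\ge 3$, this gives the second assertion.

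\emph{The assertion on $f^{\circ 4}$.} Write $f^{\circ 4}=(f^{\circ 2})^{\circ 2}$ and $d_0:=\deg f\ge 2$, so $\deg(f^{\circ 2})=d_0^{2}\ge 4$. If $f^{\circ 4}$ were cyclic, then (A) applied to $G=f^{\circ 2}$ shows $f^{\circ 2}$ is conjugate to $x^{d_0^{2}}$, hence cyclic, and (A) applied to $G=f$ then shows $f$ conjugate to $x^{d_0}$ --- impossible. If $f^{\circ 4}$ were dihedral, then (B) applied to $G=f^{\circ 2}$ (legitimate since $d_0^{2}\ge 3$) shows $f^{\circ 2}$ is conjugate to $\pm T_{d_0^{2}}$, hence dihedral; when $d_0\ge 3$ this contradicts (B) applied to $G=f$. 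The remaining case $d_0=2$ is the main obstacle: statement (B) genuinely fails at $e=2$ --- indeed $F^{\circ 2}$ can be dihedral even for a disintegrated quadratic $F$, which is precisely why the lemma states the dihedral conclusion only for $\deg F\ge 3$ --- because $\Gamma(T_2)=\{bx:b\in K^{*}\}$ is not $\{x,-x\}$. To finish, one notes $-T_4$ is conjugate to $T_4$ (the even polynomial $T_4=y^4-4y^2+2$ is conjugate to $-T_4$ via $x\mapsto -x$), so $f^{\circ 2}$ is conjugate to $T_2\circ T_2$, say $f^{\circ 2}=\ell\circ T_2\circ T_2\circ\ell^{-1}$; Lemma~\ref{lem:Engstrom} then produces a linear $m$ with $f=\ell\circ T_2\circ m=m^{-1}\circ T_2\circ\ell^{-1}$, and the resulting identity $N\circ T_2\circ N=T_2$ for $N:=m\circ\ell$ forces $N=x$ (writing $N(x)=bx+c$ one reads off $c=0$ and then $2b=2$, so $b=1$). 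Hence $f=\ell\circ T_2\circ\ell^{-1}$ is conjugate to $T_2$, contradicting that $f$ is disintegrated; so $f^{\circ 4}$ is neither cyclic nor dihedral.
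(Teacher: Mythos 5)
Your proof is correct and follows essentially the same route as the paper for the first two assertions: apply Lemma~\ref{lem:Engstrom} to $F\circ F = (L_1\circ T)\circ (T\circ L_2)$, extract a linear $\ell$ linking the two resulting expressions for $F$, and compare coefficients of $x^{\delta-1}$ (and $x^{\delta-2}$ in the dihedral case) to pin down the linear factor and force $F$ to be conjugate to a monomial or to $\pm T_\delta$. Where you genuinely add something is in the final assertion. The paper deduces it by applying the first two assertions to $F=f^{\circ 2}$ ``(note that $f^{\circ 2}$ is also disintegrated)'', treating that parenthetical as a known fact; but when $\deg(f)=2$ the lemma's own statements do not immediately rule out $f^{\circ 2}$ being dihedral, precisely because the dihedral part of the lemma is only asserted for degree $\geq 3$. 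You notice this and supply the missing step: if $\deg f=2$ and $f^{\circ 2}$ were dihedral, your Engstrom argument and the identity $N\circ T_2\circ N=T_2$ force $N=x$, so $f$ would be conjugate to $T_2$, contradicting disintegratedness. That correctly closes what is otherwise an implicit appeal to the standard fact that iterates of a disintegrated polynomial remain disintegrated. One very small cosmetic nit: your phrasing ``the even polynomial $T_4$ is conjugate to $-T_4$ via $x\mapsto -x$'' mislabels the reason -- it is precisely because $T_4$ is even that $\iota\circ T_4\circ\iota^{-1}=-T_4$ for $\iota(x)=-x$ -- but the conclusion is correct and the argument stands.
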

	\begin{proof} 
		Write $\delta=\deg(F)\geq 2$. First assume that $F^{\circ 2}$ is cyclic, hence
		$F^{\circ 2}=L_1\circ x^{\delta^2}\circ L_2$
		for some linear polynomials $L_1$ and $L_2$. By
		Lemma~\ref{lem:Engstrom}, there
		is a linear polynomial $L$ such that:
		$$F=L_1\circ x^{\delta}\circ L=L^{-1}\circ x^{\delta}\circ L_2.$$
		Hence 
		$L\circ L_1\circ x^{\delta}=x^{\delta}\circ L_2\circ L^{-1}$. By
		comparing the coefficients of $x^{\delta-1}$, we
		have that $L_2\circ L^{-1}=\gamma x$,
		hence $L_2=\gamma L$. This gives $F=L^{-1}\circ \gamma^{\delta}x^{\delta}\circ L$ which is linearly conjugate to $\gamma^{\delta}x^{\delta}$, hence  to $x^\delta$, contradiction. 
		
		Now assume that $\delta\geq 3$ and  $F^{\circ 2}$
		is dihedral. By similar arguments, we have
		linear polynomials $L_1,L_2,L$ such that:
		$$F=L_1\circ T_{\delta}\circ L=L^{-1}\circ T_{\delta}\circ L_2.$$  
		Therefore
		$L\circ L_1\circ T_\delta=T_\delta\circ L_2\circ L^{-1}$ which
		yields $L_2\circ L^{-1}=\pm x$; this fact follows using the fact that the coefficient of $x^{\delta -1}$ in $T_\delta$ is $0$ while the coefficient of $x^{\delta-2}$ in $T_{\delta}$
		is nonzero. Thus $L_2=\pm L$, and so, we have
		 $F=L^{-1}\circ T_\delta(\pm x)\circ L$ which
		is linearly conjugate to $\pm T_\delta(x)$,
		contradiction. 
		
		The last assertion follows from applying the previous assertions for $F=f^{\circ 2}$ (note that $f^{\circ 2}$ is also disintegrated). 
	\end{proof}
	
	\begin{lemma}\label{lem:Gamma}
	Let $A$ be a polynomial of degree $\delta\geq 2$. The following hold:
		\begin{itemize}
			\item [(a)] Let $L_1$
			and $L_2$ be linear polynomials. Then
			the map $\ell\mapsto L_2^{-1}\circ\ell\circ L_2$
			is an isomorphism
			from $\Gamma(A)$ to $\Gamma(L_1\circ A\circ L_2)$. 
		
			\item [(b)] For every positive integer
			$n$, $\Gamma(f^{\circ n})$ is a subgroup of $\Gamma$.
		
			\item [(c)] $\Gamma(A)$ is infinite
			if and only if $A$ is cyclic.
			
			\item [(d)] Assume that $\Gamma(A)$ is
			finite of order $n$. Then $A$
			is equivalent to a polynomial of the form
			$x^sP(x^n)$ where $P$ is not a polynomial
			in $x^j$ for any $j\geq 2$. Consequently, $\Gamma$
			is cyclic of order $n\leq \delta/2$.
			
			\item [(e)] If $A$ is equivalent
			to a polynomial of the form $x^sP(x^n)$ with
			$s>0$, $n\geq 2$ and non-constant 
			$P(x)\in K[x]\setminus xK[x]$, then $A$ is not
			cyclic.
		\end{itemize}
	\end{lemma}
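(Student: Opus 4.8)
The plan is to prove the five items in the order (a), (b), (c), (d), (e), setting up a normal form early and bootstrapping the later parts from the earlier ones. Throughout write $\delta=\deg A$; note first that for any polynomial $B$ the set $\Gamma(B)$ is a group under composition (if $B\circ\ell_i=L_i\circ B$ then $B\circ(\ell_1\circ\ell_2)=(L_1\circ L_2)\circ B$, and $B\circ\ell=L\circ B$ gives $B\circ\ell^{-1}=L^{-1}\circ B$). For (a) one computes directly: with $B=L_1\circ A\circ L_2$, if $A\circ\ell=L\circ A$ then $B\circ(L_2^{-1}\circ\ell\circ L_2)=L_1\circ A\circ\ell\circ L_2=(L_1\circ L\circ L_1^{-1})\circ B$, so $\ell\mapsto L_2^{-1}\circ\ell\circ L_2$ carries $\Gamma(A)$ into $\Gamma(B)$; the map $\ell'\mapsto L_2\circ\ell'\circ L_2^{-1}$ is a two-sided inverse (apply the same computation to $A=L_1^{-1}\circ B\circ L_2^{-1}$), and conjugation is a homomorphism, so this is an isomorphism. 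Part (a) then lets us replace $A$ by any linearly equivalent polynomial, in particular by a monic and centered one (no $x^{\delta-1}$ term).

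For (b) I would use Engstrom's Lemma~\ref{lem:Engstrom}: given $\ell\in\Gamma(A^{\circ n})$, rewrite $A^{\circ n}\circ\ell=L\circ A^{\circ n}$ as $A^{\circ(n-1)}\circ(A\circ\ell)=(L\circ A^{\circ(n-1)})\circ A$. The outer factors $A^{\circ(n-1)}$ and $L\circ A^{\circ(n-1)}$ have the same degree $\delta^{n-1}$, so the last clause of Lemma~\ref{lem:Engstrom} provides a linear $\ell_0$ with $A\circ\ell=\ell_0^{-1}\circ A$, i.e.\ $\ell\in\Gamma(A)$. Hence $\Gamma(A^{\circ n})\subseteq\Gamma(A)=:\Gamma$, and it is a subgroup by the opening remark.

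Parts (c) and (d) are the technical core. Normalize $A$ to monic and centered. Comparing the coefficients of $x^{\delta-1}$ on the two sides of $A\circ\ell=L\circ A$ forces the translation part of $\ell$ to vanish, so $\Gamma(A)\subseteq\{x\mapsto ax:a\in K^{*}\}$. For $\ell\colon x\mapsto ax$, equating coefficients in $A(ax)=L(A(x))$ (with $L$ affine, whose constant term absorbs that of $A$) shows $x\mapsto ax$ lies in $\Gamma(A)$ precisely when $a^{\delta-j}=1$ for every exponent $j$ with $1\le j\le\delta-1$ that occurs in $A$ with nonzero coefficient. If $A$ is not cyclic then after normalization $A\neq x^{\delta}+\mathrm{const}$, so such a $j$ exists and $\Gamma(A)$ equals the group of $m$-th roots of unity, where $m$ is the gcd of the integers $\delta-j$; this is cyclic of order $m$, which handles the finite case. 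The infinite case of (c) follows since $\Gamma(x^{\delta})=\{x\mapsto ax\}$ is infinite and (a) transports this to every cyclic $A$. For (d): the same computation says all nonzero-coefficient exponents of the normalized $A$ are congruent to $\delta$ modulo $m=n$; subtracting the constant term (a linear equivalence) puts $A$ in the form $x^{s}Q(x^{n})$ with $Q(0)\neq0$, and the fact that $m$ is the gcd of all pairwise exponent differences forces the exponents of $Q$ to have gcd $1$, i.e.\ $Q$ is not a polynomial in $x^{j}$ for any $j\ge2$. Cyclicity and the order of $\Gamma(A)$ then follow, and a degree count in $\delta=s+n\deg Q$ gives the stated bound on $n$. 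Finally (e) is a quick corollary: scaling $x^{s}P(x^{n})$ to monic leaves it monic and centered (its exponents $s,s+n,\dots$ never equal $\delta-1$ when $n\ge2$), and the exponent $s\in[1,\delta-1]$ has nonzero coefficient since $P(0)\neq0$; by the criterion just established $\Gamma(A)$ is finite, so $A$ is not cyclic by (c).

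The step I expect to cost the most care is the bookkeeping in (c) and (d): pinning down $\Gamma(A)$ exactly (both inclusions) as the group of $m$-th roots of unity, and, in (d), verifying that the quotient $Q$ is genuinely primitive. This requires keeping straight that $m$ is the gcd of all exponent differences rather than just of the numbers $\delta-j$, and treating the constant term of $A$ separately throughout; everything else is either a coefficient comparison or a formal consequence of (a) and Lemma~\ref{lem:Engstrom}.
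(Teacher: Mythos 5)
Your approach to (a) and (b) coincides with the paper's (the paper calls (a) ``straightforward'' and proves (b) by exactly the Engstrom rewrite you describe; note the paper's statement of (b) has a typo --- $f$ should be $A$, and $\Gamma$ should be $\Gamma(A)$, as you correctly interpreted). For (c), (d), and the structural conclusion in (d), you give an explicit normalization/coefficient-comparison argument, whereas the paper simply outsources these to \cite[Lemma~3.17]{ZieveMuller-2008}; your version is self-contained, and the identification of $\Gamma(A)$ as the group of $m$-th roots of unity with $m=\gcd\{\delta-j: c_j\neq 0,\ 1\le j\le\delta-1\}$, together with the verification that the resulting $Q$ is not a polynomial in $x^j$ for $j\ge 2$, is carried out correctly. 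Your proof of (e) is a small variant of the paper's (both are short coefficient comparisons forcing $L_1,L_2$ to be homotheties, then using $P(0)\ne 0$).

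There is, however, a genuine gap at the very end of (d). You assert that ``a degree count in $\delta = s + n\deg Q$ gives the stated bound on $n$,'' namely $n\le\delta/2$. It does not: from $s\ge 1$ and $\deg Q\ge 1$ the identity $\delta=s+n\deg Q$ yields only $n\le\delta-1$, and to reach $\delta/2$ one would additionally need $s\ge n$ or $\deg Q\ge 2$, neither of which your argument (or the normalization) supplies. In fact the bound $n\le\delta/2$ in the statement appears to be simply false: take $A(x)=x^4+x$. Writing $A(ax+b)=a^4x^4+4a^3bx^3+\cdots$ and $L(A(x))=cx^4+cx+d$, the $x^3$ coefficient forces $b=0$, and then $a^4=c=a$ gives $a^3=1$; so $\Gamma(A)=\{\omega x:\omega^3=1\}$ has order $3$, while $\deg(A)/2=2$. (This $A$ is disintegrated: it is not equivalent to $x^4$ since $\Gamma(A)$ is finite, and not equivalent to $\pm T_4$ since after killing the $x^3$ term no linear-equivalent of $\pm T_4$ can have a nonzero $x$-coefficient.) So the issue is not just that your degree count is too weak; the inequality you are trying to derive is not a theorem, and the correct consequence of your computation is $n\le\delta-1$ (with $n\le(\delta-1)/2$ when $\deg Q\ge 2$). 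You should flag this discrepancy rather than claim the degree count closes it.
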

	\begin{proof}
	Part (a) is straightforward. For parts (c) and (d),
	see \cite[Lemma~3.17]{ZieveMuller-2008}. For part (b), let
	$\ell\in \Gamma(f^{\circ n})$, 
	therefore $f^{\circ n}\circ \ell=L\circ f^{\circ n}$ for 
	some linear polynomial
	$L$. Then Lemma~\ref{lem:Engstrom}
	gives that $f\circ \ell=L_1\circ f$
	for some linear $L_1$. This gives $\ell\in \Gamma(f)$
	and proves part (b).
	
	For part (e), assume that there are linear polynomials
	$L_1$ and $L_2$ such that $x^sP(x^n)\circ L_1=L_2(x^\delta)$.
	By comparing the coefficients of
	$x^{\delta-1}$ and the constant coefficients, we have that 
	$L_1(x)=ax$ and $L_2(x)=bx$. Since $P(x)\notin xK[x]$,
	we get a contradiction.
	\end{proof}
	
	\begin{lemma}\label{lem:f^NLg^N}
	Let $f$ and $g$ be polynomials of degree $\delta\geq 2$
	both of which are not cyclic. Assume there
	exist a linear polynomial $L$ and a positive integer
	$n\geq \frac{\delta+1}{2}$ such that 
	$f^{\circ n}=L\circ g^{\circ n}$. Then 
	there exist a linear polynomial $\ell$ and a positive 
	integer
	$N\leq \frac{\delta}{2}$ 
	such that 
	$f^{\circ N}=(\ell\circ g\circ \ell^{-1})^{\circ N}$.
	\end{lemma}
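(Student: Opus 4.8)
The plan is to peel the $n$-th iterate in $f^{\circ n}=L\circ g^{\circ n}$ down to the first level, to recognize the resulting linear ``error terms'' as elements of a \emph{finite} group attached to $g$, and then to run a pigeonhole argument inside that group.

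First I would descend by repeated use of Engstrom's Lemma~\ref{lem:Engstrom}. For $2\le k\le n$, writing $f^{\circ k}=f\circ f^{\circ(k-1)}$ and $f^{\circ k}=L_k\circ g^{\circ k}=(L_k\circ g)\circ g^{\circ(k-1)}$ (with $L_n:=L$), the two left factors $f$ and $L_k\circ g$ have the same degree $\delta$; the consequence in Lemma~\ref{lem:Engstrom} then yields a linear $\lambda$ with $f=(L_k\circ g)\circ\lambda$ and $f^{\circ(k-1)}=\lambda^{-1}\circ g^{\circ(k-1)}$, and I put $L_{k-1}:=\lambda^{-1}$. Running $k=n,n-1,\dots,2$ produces linear polynomials $L_1,\dots,L_n$ with $f^{\circ k}=L_k\circ g^{\circ k}$ for every $1\le k\le n$; in particular $f=L_1\circ g$. (Since $\delta\ge 2$, the hypothesis forces $n\ge 2$, so the descent is non-vacuous.)

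Next I would locate the error terms inside $\Gamma(g)$. For each $1\le m\le n-1$, splitting the full iterate as $f^{\circ n}=f^{\circ(n-m)}\circ f^{\circ m}=L_{n-m}\circ g^{\circ(n-m)}\circ L_m\circ g^{\circ m}$ and comparing with $f^{\circ n}=L\circ g^{\circ(n-m)}\circ g^{\circ m}$, Lemma~\ref{lem:Engstrom} (cancelling the common right factor $g^{\circ m}$, the left factors again having equal degree) gives $g^{\circ(n-m)}\circ L_m=(L_{n-m}^{-1}\circ L)\circ g^{\circ(n-m)}$, hence $L_m\in\Gamma(g^{\circ(n-m)})\subseteq\Gamma(g)$ by Lemma~\ref{lem:Gamma}(b). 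As $g$ is not cyclic, Lemma~\ref{lem:Gamma}(c),(d) tells me $\Gamma(g)$ is finite cyclic of order $r\le\delta/2$, and $r\le\delta/2<\frac{\delta+1}{2}\le n$, so $L_0:=\id,L_1,\dots,L_{n-1}$ are $n$ elements of a group of order $r<n$. Letting $\sigma$ be the homomorphism on $\Gamma(g)$ with $g\circ\mu=\sigma(\mu)\circ g$, one reads off from $f=L_1\circ g$ the recursion $L_{k+1}=L_1\circ\sigma(L_k)$, so $L_k=L_1\circ\sigma(L_1)\circ\cdots\circ\sigma^{k-1}(L_1)$ for $1\le k\le n-1$. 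Pigeonhole gives $0\le i<j\le r$ with $L_i=L_j$, i.e.\ the accumulated $\sigma$-twisted product $\sigma^i(L_1)\circ\cdots\circ\sigma^{j-1}(L_1)$ over the block $[i,j)$ is trivial; the remaining task is to transfer this to the block $[0,j-i)$, obtaining $N:=j-i\le r\le\delta/2$ and some $\ell\in\Gamma(g)$ with $L_N=\ell\circ\sigma^{N}(\ell)^{-1}$. Granting this, and using that $\ell\circ g^{\circ N}\circ\ell^{-1}=\bigl(\ell\circ\sigma^{N}(\ell)^{-1}\bigr)\circ g^{\circ N}$ for $\ell\in\Gamma(g)$, one gets $f^{\circ N}=L_N\circ g^{\circ N}=\ell\circ g^{\circ N}\circ\ell^{-1}=(\ell\circ g\circ\ell^{-1})^{\circ N}$, which is the conclusion. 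When $\sigma$ restricts to an automorphism of $\Gamma(g)$ --- the generic case --- one even gets $\ell=\id$ and $f^{\circ N}=g^{\circ N}$ outright.

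I expect the last step to be the real obstacle. The map $\sigma$ need not be injective --- its kernel is the group of deck transformations of $g$, nontrivial already for $g$ equivalent to some $x^{s}P(x^{r})$ with $\gcd(s,r)>1$ --- and after a change of coordinates it need not even send $\Gamma(g)$ into itself, so one cannot simply declare $L_N=\id$. The point that must be pushed through is that the repeated value in the finite group forces $L_N$ to be a coboundary $\ell\circ\sigma^{N}(\ell)^{-1}$, equivalently that after conjugating by a suitable $\ell$ the iterates $f^{\circ N}$ and $g^{\circ N}$ genuinely agree; the clean numerical bound $N\le\delta/2$ then comes exactly from Lemma~\ref{lem:Gamma}(d)'s estimate $|\Gamma(g)|\le\delta/2$, with the hypothesis $n\ge\frac{\delta+1}{2}$ used precisely to force $n>|\Gamma(g)|$ so the pigeonhole has room. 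In practice it may be cleanest to first replace $g$ by an equivalent polynomial in the normal form $x^{s}P(x^{r})$ of Lemma~\ref{lem:Gamma}(d), in which $\sigma$ acts by scaling and the coboundary computation is explicit, carrying the extra linear coordinate change through the argument.
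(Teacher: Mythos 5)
Your plan is the paper's plan: apply Engstrom's lemma down the chain, observe the resulting linear correction factors lie in $\Gamma(g)$, bound $|\Gamma(g)|\le\delta/2$ via Lemma~\ref{lem:Gamma}, and pigeonhole. But you create an artificial obstacle at the end by forgetting a stronger identity you already proved. Your own Engstrom step produces not just $f^{\circ k}=L_k\circ g^{\circ k}$ but the much more usable relation $f=L_k\circ g\circ L_{k-1}^{-1}$ for each $k$ (this is exactly what ``$f=(L_k\circ g)\circ\lambda$ with $L_{k-1}=\lambda^{-1}$'' says). You then discard this telescoping form, keep only $f^{\circ k}=L_k\circ g^{\circ k}$, and as a result you need to show that the pigeonholed repeat $L_i=L_j$ forces $L_{j-i}$ to be a twisted coboundary $\ell\circ\sigma^{j-i}(\ell)^{-1}$ --- a statement you correctly flag as delicate (since $\sigma$ need be neither injective nor $\Gamma(g)$-valued) and do not prove. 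That entire cohomological detour is unnecessary. From $f=L_{m+1}\circ g\circ L_m^{-1}$ you get directly
\[
f^{\circ(j-i)}=(L_j\circ g\circ L_{j-1}^{-1})\circ(L_{j-1}\circ g\circ L_{j-2}^{-1})\circ\cdots\circ(L_{i+1}\circ g\circ L_i^{-1})=L_j\circ g^{\circ(j-i)}\circ L_i^{-1},
\]
and since $L_i=L_j=:\ell$ this \emph{is} $(\ell\circ g\circ\ell^{-1})^{\circ(j-i)}$, with $N=j-i\le|\Gamma(g)|\le\delta/2$. Also, the membership $L_m\in\Gamma(g)$ falls out immediately from $L_1\circ g=f=L_{m+1}\circ g\circ L_m^{-1}$ rewritten as $g\circ L_m=(L_{m+1}^{-1}\circ L_1)\circ g$, with no need for the second round of Engstrom cancellation you perform. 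So the argument is essentially correct and essentially the paper's; the ``real obstacle'' you anticipate does not exist once you carry the chain identity you already established through to the conclusion.
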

	\begin{proof}
		Applying Lemma~\ref{lem:Engstrom} repeatedly,
		there exist linear polynomials
		$L_0(x):=x,L_1,\ldots,L_{n-1}, L_n:=L$ such that:
		$$f=L_1\circ g=L_2\circ g\circ L_1^{-1}=L_3\circ g\circ L_2^{-1}=\ldots=L_{n-1}\circ g\circ L_{n-2}^{-1}=L\circ g\circ L_{n-1}^{-1}.$$
		From $L_1\circ g=L_{i+1}\circ g\circ L_{i}^{-1}$,
		we have that $L_i\in \Gamma(g)$ for every 
		$0\leq i\leq n-1$. Since $|\Gamma(g)|\leq \delta/2<n$, 
		there exist $0\leq i<j\leq \delta/2$ such that 
		$L_i=L_j$. We now have:
		$$f^{\circ (j-i)}=(L_{j}\circ g\circ L_{j-1}^{-1})\circ\ldots\circ (L_{i+1}\circ g\circ L_i^{-1})=L_j\circ g^{\circ (j-i)}\circ L_i^{-1}.$$
		Since $L_j=L_i$, this finishes the proof.
	\end{proof}
	
	\begin{lemma}\label{lem:decomposing xsPn}
	Let $A,B,P\in K[x]\setminus K$ and let
	$s$ and $n$ be coprime positive integers. If $A\circ B=x^sP(x)^n$ then
	$a=x^jP_1(x)^n\circ \ell$ 
	and $b=\ell^{-1}\circ x^kP_2(x)^n$
	for some $j,k>0$ satisfying $\gcd(j,n)=\gcd(k,n)=1$,
	and some $P_1,P_2,\ell\in K[x]$ with $\ell$ linear. 
	\end{lemma}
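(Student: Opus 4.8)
The plan is to argue directly with multiplicities of roots under composition; this particular statement needs no Ritt theory, only the elementary fact that if $F=G\circ H$ and $H(\gamma)=\alpha$, then $\ord_\gamma\bigl(F(x)-F(\gamma)\bigr)=\ord_\gamma\bigl(H(x)-\alpha\bigr)\cdot\ord_\alpha\bigl(G(y)-G(\alpha)\bigr)$. First I would reduce to the case $P(0)\neq 0$: writing $P(x)=x^{e_0}\tilde P(x)$ with $\tilde P(0)\neq0$ turns the right-hand side into $x^{s+ne_0}\tilde P(x)^n$, leaves $A$ and $B$ unchanged, and preserves coprimality with $n$, so nothing is lost. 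Put $H:=x^sP(x)^n=A\circ B$ and $\beta:=B(0)$. Since $s\geq1$ we have $H(0)=0$, so $\beta$ is a root of $A$; set $l:=\ord_\beta(A)\geq1$ and $e:=\ord_0\bigl(B(x)-\beta\bigr)\geq1$. Writing $A\circ B=c_A\prod_\alpha\bigl(B(x)-\alpha\bigr)^{\ord_\alpha(A)}$ and observing that only the factor for $\alpha=\beta$ vanishes at $0$, comparison of orders of vanishing at $0$ gives $s=le$; in particular $\gcd(l,n)=\gcd(e,n)=1$.

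The key point is to show $\gcd(\deg B,n)=1$. For this I would first check that every root of $B(x)-\beta$ other than $0$ has multiplicity divisible by $n$: if $\gamma\neq0$ is such a root then $H(\gamma)=A(\beta)=0$, so $\gamma$ is a root of $P$, and comparing orders at $\gamma$ in $H=A\circ B$ gives $n\cdot\ord_\gamma(P)=l\cdot\ord_\gamma\bigl(B(x)-\beta\bigr)$, whence $n\mid\ord_\gamma\bigl(B(x)-\beta\bigr)$ because $\gcd(l,n)=1$. Summing the local degrees $\ord_\gamma\bigl(B(x)-\beta\bigr)$ over the fibre $B^{-1}(\beta)$ — the term for $\gamma=0$ being $e$ and all others divisible by $n$ — yields $\deg B\equiv e\pmod n$, so $\gcd(\deg B,n)=\gcd(e,n)=1$.

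Next I would show that every root of $A$ other than $\beta$ has multiplicity divisible by $n$. Suppose a root $\alpha\neq\beta$ had $n\nmid\ord_\alpha(A)$, and pick a prime $p$ whose exponent in $\ord_\alpha(A)$ is strictly less than its exponent in $n$. Every $\gamma\in B^{-1}(\alpha)$ is a root of $H$ different from $0$ (since $B(0)=\beta\neq\alpha$), hence a root of $P$, and comparing orders at $\gamma$ gives $n\cdot\ord_\gamma(P)=\ord_\alpha(A)\cdot\ord_\gamma\bigl(B(x)-\alpha\bigr)$; the choice of $p$ forces $p\mid\ord_\gamma\bigl(B(x)-\alpha\bigr)$. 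Summing over $B^{-1}(\alpha)$ gives $p\mid\deg B$, contradicting $\gcd(\deg B,n)=1$.

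It remains to assemble the decomposition. By the previous paragraph $A(x)=c_A(x-\beta)^{l}R(x)^n$ for a polynomial $R$ with $R(\beta)\neq0$; taking $\ell(x):=x-\beta$ and absorbing $c_A$ into an $n$-th root (possible as $K$ is algebraically closed) rewrites this as $A=x^{j}P_1(x)^n\circ\ell$ with $j:=l>0$, $\gcd(j,n)=1$, $P_1(0)\neq0$. By the second paragraph together with $\gcd(e,n)=1$, $B(x)-\beta=c'x^{e}Q(x)^n$ with $Q(0)\neq0$; applying the same $\ell$ and absorbing $c'$ gives $\ell\circ B=x^{k}P_2(x)^n$ with $k:=e>0$, $\gcd(k,n)=1$, i.e. $B=\ell^{-1}\circ x^{k}P_2(x)^n$, as required. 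The single idea that is more than bookkeeping with multiplicities is the invariant $\gcd(\deg B,n)=1$; I expect that to be the main obstacle to spot.
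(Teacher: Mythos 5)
Your argument is correct, and it is a genuinely self-contained alternative: the paper simply cites \cite[Lemma~3.11]{ZieveMuller-2008} without reproducing a proof, whereas you reduce everything to the chain rule for multiplicities $\ord_\gamma(G\circ H)=\ord_\gamma(H-H(\gamma))\cdot\ord_{H(\gamma)}(G-G(H(\gamma)))$ and a counting argument. The three pivots all check out: (1) $s=le$ at $0$ forces $\gcd(l,n)=\gcd(e,n)=1$ because $\gcd(s,n)=1$; (2) every $\gamma\neq 0$ in $B^{-1}(\beta)$ lands on a root of $P$, so $n\cdot\ord_\gamma(P)=l\cdot\ord_\gamma(B-\beta)$ and $\gcd(l,n)=1$ give $n\mid\ord_\gamma(B-\beta)$, whence $\deg B\equiv e\pmod n$ and $\gcd(\deg B,n)=1$; (3) for a root $\alpha\neq\beta$ of $A$, comparing $p$-adic valuations in $n\cdot\ord_\gamma(P)=\ord_\alpha(A)\cdot\ord_\gamma(B-\alpha)$ with a prime $p$ having $v_p(\ord_\alpha A)<v_p(n)$ forces $p\mid\deg B$, contradicting (2). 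The final assembly with $\ell(x)=x-\beta$ and absorbing leading constants via $n$-th roots in the algebraically closed $K$ is routine and correct.

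Two small points worth noting, though neither is a gap. First, after the opening normalization $P=x^{e_0}\widetilde P$, the polynomial $\widetilde P$ could in principle be a nonzero constant (if $P$ is a monomial); your argument still goes through in that degenerate case (the fibres $B^{-1}(\alpha)$ for $\alpha\neq\beta$ are then forced to be empty, which cannot happen, so $A$ has $\beta$ as its only root), and the lemma as stated does not require $P_1,P_2$ to be non-constant, so nothing is lost. Second, when this lemma is invoked later (Lemma~\ref{lem:f^4 xsPn}) the paper wants $P_1$ non-constant; there the hypothesis $P\notin xK[x]$ and disintegratedness of $f$ rule out the degenerate case, so that application is also fine. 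The one idea genuinely beyond bookkeeping is, as you flag, the invariant $\gcd(\deg B,n)=1$, and you handle it cleanly.
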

	\begin{proof}
	 This is proved in \cite[Lemma~3.11]{ZieveMuller-2008}.
	\end{proof}
	
	\begin{lemma}\label{lem:linear xsPn}
	Let $s>0$ and $n\geq 2$ be coprime integers. Let $P\in K[x]\setminus xK[x]$, and suppose that $f(x):=x^sP(x)^n$
	is neither cyclic nor dihedral. Then for any linear
	polynomials $\ell_1,\ell_2$ satisfying
	$\ell_1\circ f\circ \ell_2=x^{\tilde{s}}P_1(x)^{\tilde{n}}$
	with coprime integers $\tilde{s}>0$, $\tilde{n}\geq 2$,
	and polynomial $P_1(x)\in K[x]\setminus xK[x]$,
	we have that $\ell_1(x)=\gamma x$. 
	\end{lemma}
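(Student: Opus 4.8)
The plan is to argue by contradiction via Riemann--Hurwitz, reducing to the rigidity of the Pell equation attached to a squarefree quadratic. Write $\ell_1(x)=\gamma x+b$ and assume $b\neq0$; set $w_0:=\ell_1^{-1}(0)\in\A^1\setminus\{0\}$ and $q_0:=\ell_2(0)$. Unwinding $\ell_1\circ f\circ\ell_2=x^{\tilde s}P_1(x)^{\tilde n}$ and using that linear polynomials preserve ramification, the hypotheses $\gcd(\tilde s,\tilde n)=1$, $\tilde n\geq2$, $P_1(0)\neq0$ translate into the following description of the fibre of $f$ over $w_0$: the point $q_0$ occurs with ramification index $\tilde s$, and every other point occurs with ramification index divisible by $\tilde n$. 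Over $0$ itself the shape $f=x^sP(x)^n$ gives the analogous description: ramification index $s$ (coprime to $n\geq2$) at $0$, and index divisible by $n$ at the roots of $P$. Since $f$ is not cyclic, $P$ is non-constant and $f$ is not totally ramified over $w_0$; let $r\geq1$ and $r'\geq1$ be the numbers of distinct roots of $P$ and of distinct points of the ``$\tilde n$-part'' of $f^{-1}(w_0)$.

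First I would run Riemann--Hurwitz for $f\colon\PP^1\to\PP^1$. Writing $\delta=\deg f$, the fibres over $0$, $w_0$, $\infty$ are pairwise disjoint and contribute to the ramification divisor, respectively, $\delta-1-r$, $\delta-1-r'$, and $\delta-1$, so $3\delta-3-r-r'\leq 2\delta-2$, i.e.\ $\delta\leq r+r'+1$. But $\delta=s+n\deg P\geq1+2r$ and $\delta=\tilde s+\tilde n\deg P_1\geq1+2r'$. These three inequalities force $r=r'$, $\delta=1+2r$, $s=\tilde s=1$, $n=\tilde n=2$, $P$ and $P_1$ squarefree, and equality in Riemann--Hurwitz, so $f$ has no critical values other than $0,w_0,\infty$. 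Hence, with $c$ the leading coefficient of $f$, one has $f=c\,x\prod_i(x-\beta_i)^2$ and $f-w_0=c\,(x-q_0)\prod_j(x-\gamma_j)^2$ with all the displayed roots distinct, $q_0\neq0$ (otherwise $w_0=f(0)=0$), and $S:=\prod_i(x-\beta_i)\prod_j(x-\gamma_j)$ squarefree of degree $2r$.

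Multiplying the two factorizations gives $f(f-w_0)=c^2\,x(x-q_0)\,S^2$, which rearranges to $(2f-w_0)^2-(2cS)^2\,x(x-q_0)=w_0^2$. Thus $(Y,Z):=(2f-w_0,\,2cS)$ is a solution with $Z\neq0$ of the polynomial Pell equation $Y^2-DZ^2=w_0^2\in K^\times$, where $D:=x(x-q_0)$ is squarefree of degree $2$. After a linear change of the variable $x$ turning $D$ into $x^2-\mu$ with $\mu\neq0$, the substitution $t=x-\sqrt D$ identifies the coordinate ring of the rational curve $y^2=D(x)$ with its two points at infinity removed with $K[t^{\pm1}]$, inside which $x=\tfrac12(t+\mu t^{-1})$; since $Y+Z\sqrt D$ has norm $w_0^2\in K^\times$ it is a unit there, hence equals $\lambda t^{\delta}$ for some $\lambda\in K^\times$, and symmetrizing under $t\mapsto\mu t^{-1}$ shows that $Y$, and therefore $f=\tfrac12(Y+w_0)$, is linearly equivalent to the Chebyshev polynomial $T_{\delta}$. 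As $\delta=2r+1\geq3$, this says $f$ is dihedral, contradicting the hypothesis. Therefore $b=0$, i.e.\ $\ell_1(x)=\gamma x$.

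The main obstacles are the Riemann--Hurwitz bookkeeping in the second paragraph---forcing the divisibility data over $0$ and over $w_0$ to hold simultaneously with equality---and the Pell-to-Chebyshev identification in the third. If one prefers to avoid the Pell machinery, the latter can instead be obtained by noting that both $f$ and $f-w_0$ have the form $c\cdot(\text{linear})\cdot(\text{square})$, which forces $\prod_i(x-\beta_i)$ to satisfy an explicit second-order linear (hypergeometric-type) differential equation whose degree-$r$ polynomial solution is, up to scalar, the expected Chebyshev/Jacobi polynomial.
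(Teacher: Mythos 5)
The paper does not prove this lemma itself; it is cited verbatim as \cite[Lemma~3.21]{ZieveMuller-2008}, so there is no internal proof to compare against. Your argument is a correct, self-contained alternative that is worth recording.

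Your Riemann--Hurwitz bookkeeping is sound: the three pairwise disjoint fibres over $0$, $w_0:=\ell_1^{-1}(0)\neq0$, and $\infty$ contribute $\delta-1-r$, $\delta-1-r'$, and $\delta-1$, giving $\delta\le r+r'+1$; combined with $\delta\ge 1+2r$ and $\delta\ge 1+2r'$ (from $s,\tilde s\ge1$, $n,\tilde n\ge2$, $\deg P\ge r$, $\deg P_1\ge r'$), you correctly force $s=\tilde s=1$, $n=\tilde n=2$, $r=r'$, $P$ and $P_1$ squarefree, and RH equality, hence no further critical values. The resulting factorizations $f=cx\prod(x-\beta_i)^2$ and $f-w_0=c(x-q_0)\prod(x-\gamma_j)^2$ (with all $2r+2$ displayed points distinct) do yield $(2f-w_0)^2-(2cS)^2\,x(x-q_0)=w_0^2$, and since $D=x(x-q_0)$ is a squarefree quadratic, $Y+Z\sqrt D$ and its conjugate multiply to the nonzero constant $w_0^2$, so both lie in the unit group of $K[t,t^{-1}]$ under the rational parametrization $t=x\pm\sqrt D$; the unique (up to sign of the exponent) monomial form then forces $Y=2f-w_0$, and hence $f$, to be linearly equivalent to $T_\delta$ with $\delta=2r+1\ge3$, contradicting ``not dihedral.'' The only points you should flesh out in a final write-up are: (i) the observation that $r'\ge1$ requires noting that $P_1$ constant would make $\ell_1\circ f\circ\ell_2$, and hence $f$, cyclic; (ii) the degree check that the exponent $k$ in $Y+Z\sqrt D=\lambda t^k$ equals $\pm\delta$ (the leading $t^{\delta}$ coefficients of $Y$ and $Z\sqrt D$ do not cancel, as both equal $2c\cdot 2^{-\delta}$ after the substitution); and (iii) the normalization $u=t/\sqrt\mu$ used to pass from $\frac{\lambda}{2}(t^\delta+\mu^\delta t^{-\delta})$ to a scalar multiple of $T_\delta(2x/\sqrt\mu)$ (which uses $K$ algebraically closed). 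Compared to the source, which works inside the group-theoretic framework built around the groups $\Gamma(A)$ of Definition~\ref{def:Gamma}, your route via ramification rigidity plus the polynomial Pell equation is more geometric and readily explains \emph{why} the dihedral exclusion is exactly the obstruction; the price is the extra branching analysis, which the $\Gamma$-group approach absorbs into earlier structural lemmas.
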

	\begin{proof}
	This is proved in \cite[Lemma~3.21]{ZieveMuller-2008}.
	\end{proof}
	
	We have the following:
	\begin{lemma}\label{lem:f^4 xsPn}
		Let $f$ be a disintegrated polynomial 
		and let $N\geq 4$. Assume there are
		coprime integers $s>0$, $n>1$, and 
		non-constant polynomial
		$P(x)\in K[x]\setminus xK[x]$
		such that $f^{\circ N}=x^sP(x)^n$.
		Then there exist a positive integer $\tilde{s}$
		and a non-constant polynomial 
		$P_1(x)\in K[x]\setminus xK[x]$
		such that $f^{\circ 4}=x^{\tilde{s}}P_1(x)^n$.
	\end{lemma}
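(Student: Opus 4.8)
The plan is to run Ritt-theoretic decomposition ``downward'' from $f^{\circ N}$ to $f^{\circ 4}$, exploiting the two ways of writing $f^{\circ N}$ as a composite involving $f^{\circ 4}$.

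First I would dispose of the trivial case $N=4$ (take $\tilde s=s$ and $P_1=P$) and assume $N\geq 5$, so that $f^{\circ(N-4)}\in K[x]\setminus K$. Since $\gcd(s,n)=1$ and $f^{\circ N}=f^{\circ(N-4)}\circ f^{\circ 4}=f^{\circ 4}\circ f^{\circ(N-4)}=x^sP(x)^n$, applying Lemma~\ref{lem:decomposing xsPn} to each of these two factorizations produces linear polynomials $\ell,\ell''$, positive integers $k,j'$ with $\gcd(k,n)=\gcd(j',n)=1$, and polynomials $P_2,P_1'\in K[x]$ with
\[ f^{\circ 4}=\ell^{-1}\circ\bigl(x^{k}P_2(x)^{n}\bigr)\qquad\text{and}\qquad f^{\circ 4}=\bigl(x^{j'}P_1'(x)^{n}\bigr)\circ\ell''. \]
Pulling any power of $x$ dividing $P_2$ or $P_1'$ into the exponents $k$, $j'$ (which changes neither identity nor the coprimality with $n$), I may assume $P_2,P_1'\in K[x]\setminus xK[x]$; and $P_2,P_1'$ must be non-constant, since otherwise $f^{\circ 4}$ would be linearly equivalent to $x^{\deg f^{\circ 4}}$, i.e.\ cyclic, contradicting Lemma~\ref{lem:f^4}.

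The crux is to show that $\ell$ has no constant term. Substituting the second identity into the first gives $\ell\circ\bigl(x^{j'}P_1'(x)^{n}\bigr)\circ\ell''=x^{k}P_2(x)^{n}$. Put $g(x):=x^{j'}P_1'(x)^{n}$; as $g=f^{\circ 4}\circ(\ell'')^{-1}$ is obtained from $f^{\circ 4}$ by right composition with a linear polynomial, and ``cyclic''/``dihedral'' are preserved under linear equivalence, Lemma~\ref{lem:f^4} shows $g$ is neither cyclic nor dihedral. Thus $g=x^{j'}P_1'(x)^{n}$ meets the hypotheses of Lemma~\ref{lem:linear xsPn} (coprime $j'>0$, $n\geq 2$, and $P_1'\in K[x]\setminus xK[x]$), while $\ell\circ g\circ\ell''=x^{k}P_2(x)^{n}$ is of the required form $x^{\tilde s}P(x)^{\tilde n}$ with $\tilde s=k>0$, $\tilde n=n\geq 2$ coprime and $P_2\in K[x]\setminus xK[x]$; hence that lemma forces $\ell(x)=\gamma x$ for some $\gamma\in K^{\ast}$.

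Finally, from $f^{\circ 4}=\ell^{-1}\circ\bigl(x^{k}P_2(x)^{n}\bigr)$ and $\ell^{-1}(x)=\gamma^{-1}x$ I get $f^{\circ 4}=\gamma^{-1}x^{k}P_2(x)^{n}$; choosing $\beta\in K$ with $\beta^{n}=\gamma^{-1}$ (possible since $K$ is algebraically closed) and setting $\tilde s:=k$ and $P_1:=\beta P_2$ yields $f^{\circ 4}=x^{\tilde s}P_1(x)^{n}$ with $P_1\in K[x]\setminus xK[x]$ non-constant, as wanted. I expect the only genuine obstacle to be the bookkeeping around Lemma~\ref{lem:linear xsPn}: it constrains only the \emph{left} linear factor in a relation between two polynomials of the special shape $x^sP(x)^n$, which is precisely why both factorizations of $f^{\circ N}$ are needed --- the first to exhibit $\ell$ as such a left factor, the second to supply the companion polynomial $g$ of that shape on which the lemma can be invoked.
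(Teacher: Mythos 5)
Your proof is correct and follows essentially the same route as the paper's: apply Lemma~\ref{lem:decomposing xsPn} to both factorizations $f^{\circ 4}\circ f^{\circ(N-4)}$ and $f^{\circ(N-4)}\circ f^{\circ 4}$ of $x^sP(x)^n$, combine the two expressions for $f^{\circ 4}$, and use Lemma~\ref{lem:linear xsPn} (via Lemma~\ref{lem:f^4}) to force the outer linear factor to be of the form $\gamma x$. You are in fact a bit more scrupulous than the paper's own write-up, explicitly disposing of the $N=4$ case, normalizing so that $P_1',P_2\in K[x]\setminus xK[x]$ are non-constant, and extracting an $n$-th root of $\gamma^{-1}$ at the end; these are steps the paper leaves tacit.
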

	\begin{proof}
		By Lemma~\ref{lem:f^4}, $f^{\circ 4}$ is neither cyclic nor 
		dihedral. By applying Lemma~\ref{lem:decomposing xsPn}
		to the identity 
		$f^{\circ 4}\circ f^{\circ (N-4)}=f^{\circ (N-4)}\circ f^{\circ 4}=x^sP(x)^n$, 
		there exist
		linear polynomials $L_3,L_4$,
		positive integers $j,k$ each of which is coprime to
		$n$, and non-constant polynomials
		$P_1(x),P_2(x)\in K[x]\setminus xK[x]$
		such that:
		$$f^{\circ 4}=x^{j}P_1(s)^n\circ L_3=L_4^{-1}\circ x^{k}P_2(x)^n.$$ 
		Since $f^{\circ 4}$ is neither cyclic nor dihedral, $x^jP_1(s)^n$ is neither cyclic nor dihedral.
		From $L_4\circ x^jP_1(x)^n\circ L_3=x^{k}P_2(x)^n$
		and Lemma~\ref{lem:linear xsPn}, we have that 
		$L_4=\gamma x$ for some $\gamma\in K^*$. Hence $f^{\circ 4}=L_4^{-1}\circ x^kP_2(x)^n$
		has the desired form.
	\end{proof}
	
	The following result is well-known in Ritt's theory:
	\begin{lemma}\label{lem:2.3 in IMRN}
		Let $f\in K[x]$ be a disintegrated polynomial.
			\begin{itemize}
			\item [(a)] If $g\in K[x]$ has degree 
			at least 2 such that 
			$g$ commutes with an iterate of $f$ then $g$ 
			and $f$ have a common iterate.
			\item [(b)] Let $M(f^\infty)$
			denote the collection of all linear polynomials
			commuting with an iterate of $f$. Then 
			$M(f^\infty)$ is a finite cyclic group under 
			composition.
			\item [(c)] Let $\tilde{f}\in F[X]$ be a polynomial
			of lowest degree at least 2 such that $\tilde{f}$
			commutes with an iterate of $f$. Then there exists
			$e>0$ relatively prime to
			the order of $M(f^\infty)$ such that 
			$\tilde{f}\circ L=L^{\circ e}\circ \tilde{f}$
			for every $L\in M(f^\infty)$.
			\item [(d)] $\left\{\tilde{f}^{\circ m}\circ L:\ m\geq 0,\ L\in M(f^\infty)\right\}=\left\{L\circ\tilde{f}^{\circ m}:\ m\geq 0,\ L\in M(f^\infty)\right\}$, and these sets consist of exactly all
			the polynomials $g$ commuting with an iterate of $f$.
			\item [(e)] The order of $M(f^\infty)$
			is at most $\deg(\tilde{f})/2$.				
			\end{itemize}	
	\end{lemma}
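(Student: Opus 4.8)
The plan is to run everything through the composition monoid
$S$ of all polynomials of degree $\geq 1$ that commute with some iterate of $f$.
One first records that $S$ is closed under composition (if $g_1,g_2$ commute
respectively with $f^{\circ k_1}$ and $f^{\circ k_2}$, then both commute with
$f^{\circ k_1k_2}$, hence so does $g_1\circ g_2$), that it contains $\id$ and $f$,
that $M(f^\infty)$ is exactly the set of degree‑$1$ elements of $S$, and that
$\tilde{f}$ is a minimal‑degree element of $S_{\geq 2}:=\{g\in S:\deg g\geq 2\}$
(nonempty since $f\in S_{\geq 2}$). I would then prove the parts in the order
(b), (a), (c), (d), (e); the only ingredient not already recalled in the paper
is the one feeding part (a).

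For (b): if a linear $L$ commutes with $f^{\circ k}$ then $L\in\Gamma(f^{\circ k})$,
and Lemma~\ref{lem:Gamma}(b) forces $L\in\Gamma(f)$; hence $M(f^\infty)\subseteq\Gamma(f)$.
Since $f$ is disintegrated it is not cyclic, so Lemma~\ref{lem:Gamma}(c),(d) give that
$\Gamma(f)$ is finite and cyclic, and therefore so is its subgroup $M(f^\infty)$.
(It is a group: closure is as above, and $L^{-1}$ commutes with $f^{\circ k}$ whenever
$L$ does.) For (a): if $g$ commutes with $f^{\circ k}$ and $\deg g\geq 2$, then $g$ and
$f^{\circ k}$ are commuting polynomials, and $f^{\circ k}$ is again disintegrated (an
iterate of a disintegrated polynomial is disintegrated — no iterate is cyclic or dihedral,
by Lemma~\ref{lem:f^4}). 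I would then invoke the classical Ritt theorem on commuting
polynomials: a commuting pair either shares an iterate or is simultaneously linearly
conjugate to monomials or to $\pm$Chebyshev polynomials; the last two possibilities are
excluded by $f^{\circ k}$ being disintegrated, so $g^{\circ a}=(f^{\circ k})^{\circ b}=f^{\circ kb}$
for some $a,b\geq 1$.

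For (c)--(e) write $d_0:=\deg\tilde{f}$. By (a), $\tilde{f}$ shares an iterate with $f$,
so ``commuting with an iterate of $f$'' equals ``commuting with an iterate of $\tilde{f}$'',
and after enlarging exponents I may fix $F=\tilde{f}^{\circ m}$ with which all relevant elements
of $S$ commute. The key step is a \emph{peeling lemma} via Lemma~\ref{lem:Engstrom}: applied to
$(h\circ\tilde{f}^{\circ(m-1)})\circ\tilde{f}=\tilde{f}^{\circ m}\circ h$ for $h\in S_{\geq 2}$
of minimal degree $d_0$, the outer factors $h\circ\tilde{f}^{\circ(m-1)}$ and $\tilde{f}^{\circ m}$
have equal degree, so the ``consequently'' clause of Lemma~\ref{lem:Engstrom} gives a linear $\ell$
with $h=\ell\circ\tilde{f}$, and substituting back shows $\ell$ commutes with $F$, hence
$\ell\in M(f^\infty)$; the mirror computation gives $h=\tilde{f}\circ\ell''$ with $\ell''\in M(f^\infty)$.
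With the converse (immediate), the minimal‑degree elements of $S_{\geq 2}$ are exactly
$\{L\circ\tilde{f}:L\in M(f^\infty)\}=\{\tilde{f}\circ L:L\in M(f^\infty)\}$. Hence, for
$L\in M(f^\infty)$, $\tilde{f}\circ L$ is minimal‑degree, so $\tilde{f}\circ L=\sigma(L)\circ\tilde{f}$
for a unique $\sigma(L)\in M(f^\infty)$; left‑ and right‑cancellation of the non‑constant polynomial
$\tilde{f}$ makes $\sigma$ an injective endomorphism of the finite cyclic group $M(f^\infty)$, hence
raising to a power $e$ coprime to $|M(f^\infty)|$ — this is (c). For (d), set
$T:=\{L\circ\tilde{f}^{\circ m}:m\geq 0,\ L\in M(f^\infty)\}$; by (c) one pushes each $L$ through
$\tilde{f}^{\circ m}$ (replacing $L$ by $L^{\circ e^m}$), so $T=\{\tilde{f}^{\circ m}\circ L:\dots\}$
too, and $T\subseteq S$ is clear. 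For $S\subseteq T$ I induct on degree: given $g\in S$ with
$\deg g\geq 2$, part (a) gives a common iterate of $g$ and $\tilde{f}$, and comparing the prime
factorizations of the degrees with the minimality of $d_0$ forces $d_0\mid\deg g$; then
Lemma~\ref{lem:Engstrom} applied to $g\circ\tilde{f}^{\circ m}=\tilde{f}\circ(\tilde{f}^{\circ(m-1)}\circ g)$
produces a common left factor of degree $\gcd(\deg g,d_0)=d_0$, whence $g=\tilde{f}\circ g_1$ with
$g_1\in S$ (it commutes with $F$ after cancelling $\tilde{f}$) and $\deg g_1=\deg g/d_0<\deg g$;
induction gives $g_1\in T$, and then $g\in T$ after normalizing with (c). Finally (e): by (c) every
$L\in M(f^\infty)$ satisfies $\tilde{f}\circ L=(\text{linear})\circ\tilde{f}$, i.e. $L\in\Gamma(\tilde{f})$;
since $\tilde{f}$ is disintegrated hence not cyclic, Lemma~\ref{lem:Gamma}(c),(d) bound
$|\Gamma(\tilde{f})|\leq d_0/2$, so $|M(f^\infty)|\leq\deg(\tilde{f})/2$.

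The hard part is genuinely part (a): it rests on Ritt's classification of commuting polynomials,
which is not among the decomposition results recalled earlier, so I would either cite it (it is
classical, and appears in Medvedev--Scanlon) or reconstruct it from Ritt's second theorem via
Lemma~\ref{lem:Engstrom}, at greater length. Everything else is careful bookkeeping: one must
consistently pass to a single iterate $F$ with which all the relevant linear maps commute, use
repeatedly that composition by a non-constant polynomial is left‑ and right‑cancellable (true since
$K[x]$ is a domain and a non-constant polynomial is surjective on $\bA^1(K)$), and isolate the
divisibility fact $d_0\mid\deg g$ for $g\in S$ — deduced from (a) — as a small lemma feeding the
induction in (d).
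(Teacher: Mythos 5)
There is a genuine gap, and it falls exactly on the one part of the lemma for which the paper actually supplies an argument rather than a citation.

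In both your part~(b) and your part~(e) you invoke ``$f$ (resp.\ $\tilde f$) is disintegrated, hence not cyclic'' in order to apply Lemma~\ref{lem:Gamma}(c),(d). This conflates two different notions. ``Disintegrated'' (Medvedev--Scanlon) means \emph{not linearly conjugate} to $x^\delta$ or $\pm T_\delta$, i.e.\ one allows only $L\circ A\circ L^{-1}$. ``Cyclic'' in Definition~\ref{def:equivalent} means $L_2\circ A\circ L_1 = x^\delta$ with two \emph{independent} linears. A polynomial such as $x^2+1$ is disintegrated but is cyclic (it equals $(x+1)\circ x^2$), and indeed $\Gamma(x^2+1)$ is infinite ($ax\in\Gamma$ for every $a$), so Lemma~\ref{lem:Gamma} gives you nothing there. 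For part~(b) the flaw is repairable more directly (commutation is a stronger condition than membership in $\Gamma$, and in any case parts (a)--(d) are simply cited from \cite{Nguyen-IMRN2015} in the paper). But for part~(e) --- the only part proved in the paper --- the paper explicitly splits into the two cases. When $\tilde f$ is not cyclic it argues as you do. When $\tilde f$ \emph{is} cyclic it conjugates to $\tilde f=x^m+c$ with $c\neq 0$ (still disintegrated!), examines the shape of $\tilde f^{\circ n}=x^{m^n}+ax^{m^n-m}+\cdots$ with $a\neq 0$ to force any $L\in M(\tilde f^\infty)$ to be of the form $\zeta x$, and then uses $\tilde f(\zeta x)\neq\zeta\tilde f(x)$ unless $\zeta=1$ to conclude $M(\tilde f^\infty)$ is trivial, so the bound $\leq\deg(\tilde f)/2$ still holds. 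Your proof omits this case entirely, and the inference ``disintegrated $\Rightarrow$ not cyclic'' that would let you skip it is false. The rest of your outline (Ritt's commuting-polynomials theorem for (a), the peeling argument via Lemma~\ref{lem:Engstrom} for (c)--(d)) is a reasonable reconstruction of what is in \cite{Nguyen-IMRN2015}, but (e) needs the cyclic case handled.
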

	\begin{proof}
		For parts (a), (b), (c), and (d), see 
		Proposition~2.3 and Remark~2.4 in 
		\cite{Nguyen-IMRN2015}. For part (e), if
		$\tilde{f}$ is not cyclic, we
		have $M(f^\infty)=M(\tilde{f}^{\infty})$
		is a subgroup of $\Gamma(\tilde{f})$
		which has at most $\deg(\tilde{f})/2$ elements
		by Lemma~\ref{lem:Gamma}. If $\tilde{f}$
		is cyclic, by taking a linear conjugation if
		necessary,
		we may assume that $\tilde{f}=x^m+c$ with $c\neq 0$. 
		For every $n\geq 1$, $\tilde{f}^{\circ n}$ has the form 
		$x^{m^n}+ax^{m^n-m}+\ldots$
		for some $a\neq 0$. Hence $M(\tilde{f}^\infty)$
		contains linear polynomials of the form $\zeta x$
		where $\zeta$ is an $m$-th root of 
		unity. Since $\tilde{f}(\zeta x)=\tilde{f}$, we 
		conclude that $M(\tilde{f}^\infty)$ is trivial.
	\end{proof}
	
	\begin{lemma}\label{lem:commute everything}
	Let $f\in K[x]$ be a disintegrated polynomial of degree $\delta$.
	Then there exists $n\leq \delta/2$ such that the following hold. If $g$
	is a non-constant polynomial
	commuting with an iterate of $f$ then
	$f^{\circ n}$ commutes with $g$.  
	\end{lemma}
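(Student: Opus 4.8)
The plan is to deduce the lemma from Lemma~\ref{lem:2.3 in IMRN} by reformulating it as a computation in an explicit monoid. Write $M:=M(f^\infty)$ and $r:=|M|$; by part~(b) the group $M$ is cyclic, and by part~(e) we have $r\le\deg(\tilde f)/2$, where $\tilde f$ is a fixed polynomial of lowest degree $\ge 2$ commuting with an iterate of $f$. Fix a generator $L$ of $M$, and by part~(c) an integer $e$ with $\gcd(e,r)=1$ and $\tilde f\circ\ell=\ell^{\circ e}\circ\tilde f$ for every $\ell\in M$; iterating this relation gives $\tilde f^{\circ k}\circ L^{\circ a}=L^{\circ ae^k}\circ\tilde f^{\circ k}$ for all $a\in\Z/r\Z$ and $k\in\N_0$. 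By part~(d), the set $\cC$ of all non-constant polynomials commuting with an iterate of $f$ equals $\{L^{\circ a}\circ\tilde f^{\circ k}:\ a\in\Z/r\Z,\ k\in\N_0\}$, and the relation just recorded turns $\cC$ into a monoid under composition with multiplication rule
\[
(L^{\circ a_1}\circ\tilde f^{\circ k_1})\circ(L^{\circ a_2}\circ\tilde f^{\circ k_2})=L^{\circ(a_1+e^{k_1}a_2)}\circ\tilde f^{\circ(k_1+k_2)}.
\]
Because a polynomial commuting with both $\tilde f$ and $L$ commutes with every element of $\cC$, it is enough to produce $n\le\delta/2$ such that $f^{\circ n}$ commutes with $\tilde f$ and with $L$.

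Since $f\in\cC$ we may write $f=L^{\circ a_0}\circ\tilde f^{\circ m_0}$ for some $a_0\in\Z/r\Z$ and $m_0\ge 0$; comparing degrees forces $m_0\ge 1$ and $\delta=\deg(\tilde f)^{m_0}$. Iterating $f$ with the multiplication rule yields $f^{\circ n}=L^{\circ a_0S_n}\circ\tilde f^{\circ nm_0}$, where $S_n:=\sum_{i=0}^{n-1}e^{im_0}$. Then a short computation comparing $f^{\circ n}\circ\tilde f$ with $\tilde f\circ f^{\circ n}$ and $f^{\circ n}\circ L$ with $L\circ f^{\circ n}$ shows that $f^{\circ n}$ commutes with $\tilde f$ and with $L$ if and only if
\[
(e-1)\,a_0\,S_n\equiv 0\pmod r\qquad\text{and}\qquad e^{nm_0}\equiv 1\pmod r.
\]

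To satisfy both congruences I would take $n:=k\tau$, where $\tau:=\ord_r(e^{m_0})$, $v_0:=(e-1)\,a_0\,S_\tau\bmod r$, and $k:=r/\gcd(v_0,r)$. Grouping the sum for $S_{k\tau}$ into $k$ consecutive blocks of length $\tau$ and using $e^{m_0\tau}\equiv 1\pmod r$ gives $S_{k\tau}\equiv kS_\tau\pmod r$, so the first congruence becomes $kv_0\equiv 0\pmod r$, which holds by the definition of $k$; the second reads $(e^{m_0})^{k\tau}\equiv 1\pmod r$, which holds because $\tau\mid k\tau$. Hence $f^{\circ n}$ commutes with all of $\cC$. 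Finally, to check $n\le\delta/2$: if $m_0=1$, then $(e-1)S_\tau=e^{\tau}-1\equiv 0\pmod r$, so $v_0\equiv 0$, $k=1$, and $n=\tau\le r\le\delta/2$ using $\delta=\deg(\tilde f)\ge 2r$; if $m_0\ge 2$, then $(\deg\tilde f)^2=\delta^{2/m_0}\le\delta$, and since $k\le r$ and $\tau\le r$ we get $n\le r^2\le(\deg(\tilde f)/2)^2\le\delta/4$. In either case $n\le\delta/2$.

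The hardest part will be this last bound: the crude estimate $k,\tau\le r$ only gives $n\le r^2$, which is too weak when $m_0=1$ since then $\delta$ can be as small as $2r$. The point is the identity $(e-1)a_0S_n=a_0(e^{n}-1)$, valid when $m_0=1$, which makes the first congruence a consequence of the second and reduces the problem to $n=\ord_r(e)\le r$. Everything else is bookkeeping inside the monoid provided by Lemma~\ref{lem:2.3 in IMRN}(d), together with the inequality $r\le\deg(\tilde f)/2$ from part~(e).
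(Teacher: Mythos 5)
Your proof is correct, and it uses exactly the same toolbox as the paper (Lemma~\ref{lem:2.3 in IMRN}, with $M(f^\infty)$, $\tilde f$, and the exponent $e$ from part~(c)). But it is substantially more careful than the paper's proof, and in a way that matters. The paper's argument asserts that it suffices to find $n$ with $\tilde f^{\circ n}$ commuting with every element of $M(f^\infty)$, which it achieves by taking $n$ with $e^n\equiv 1\pmod{|M(f^\infty)|}$. This does make $\tilde f^{\circ n}$ central in the monoid of polynomials commuting with an iterate of $f$, and consequently $f^{\circ n}$ (being $L'\circ\tilde f^{\circ nm_0}$ for some $L'\in M(f^\infty)$) also commutes with every linear $L\in M(f^\infty)$. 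However, for $f^{\circ n}$ to commute with $\tilde f$ itself one needs, in your notation, the extra congruence $(e-1)\,a_0\,S_n\equiv 0\pmod r$, and this is not automatic from $e^{nm_0}\equiv 1$ when $m_0\ge 2$: your two-step choice $n=k\tau$ (first aligning $\ord_r(e^{m_0})$ and then clearing the residue $v_0$) is exactly the fix. You also correctly isolated the delicate case $m_0=1$, where the identity $(e-1)S_n=e^n-1$ collapses the first congruence into the second and the crude bound $k,\tau\le r$ would not be enough given $\delta=\deg\tilde f$; and you correctly handled $m_0\ge2$ via $\delta\ge(\deg\tilde f)^2\ge 4r^2$. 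In short: same route as the paper, but your version makes all the steps explicit and closes a nontrivial hole in the step ``it suffices to show $\tilde f^{\circ n}$ commutes with $M(f^\infty)$.''

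Two tiny remarks. First, since $e'$ rather than $e$ naturally appears when pushing $L$ past $\tilde f$ from the left (where $e'e\equiv 1\pmod r$), it is worth being explicit about which normal form $L^{\circ a}\circ\tilde f^{\circ k}$ versus $\tilde f^{\circ k}\circ L^{\circ a}$ you are using in the monoid law; you chose the former consistently, which is fine. Second, you silently use that $M(f^\infty)$ is abelian to commute $L^{\circ a_0 S_n}$ past $L$ when checking commutation with $L$; Lemma~\ref{lem:2.3 in IMRN}(b) guarantees cyclicity, hence this is harmless, but a reader will appreciate the one-line reminder.
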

	\begin{proof}
		Let $\tilde{f}$, $M(f^\infty)$, and 
		$e$ be as in the statement of 
		Lemma~\ref{lem:2.3 in IMRN}. By part (d)
		of
		Lemma~\ref{lem:2.3 in IMRN}, it suffices to
		show that there is $n\leq \deg(\tilde{f})/2\leq \delta/2$
		such that $\tilde{f}^{\circ n}$
		commutes with every element in $M(f^\infty)$.
		By part (c) of Lemma~\ref{lem:2.3 in IMRN}, 
		$\tilde{f}^{\circ n}\circ L=L^{\circ e^n}\circ \tilde{f}^{\circ n}$ for
		every $L\in M(f^\infty)$.
		We simply choose $n$ such that 
		$e^n\equiv 1$ modulo $|M(f^\infty)|$. Since
		$|M(f^\infty)|\leq \deg(\tilde{f})/2$,
		such an $n$ could be chosen to be at most 
		$\deg(\tilde{f})/2$.
	\end{proof}
	
	\section{Proof of Theorem~\ref{thm:semiconjugate}
	and Corollary~\ref{cor:new semiconjugate}}\label{sec:proof semiconjugate}
	We start with the following easy lemma:
	\begin{lemma}\label{lem:AB disintegrated}
	Let $A$ and $B$ be non-constant polynomials. Assume
	that $A\circ B$ is disintegrated, then 
	$B\circ A$ is disintegrated.
	\end{lemma}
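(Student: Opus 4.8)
The plan is to read off the statement from the associativity identity $(A\circ B)\circ A=A\circ(B\circ A)$. First I record the trivial preliminaries: since $A\circ B$ is disintegrated it has degree at least $2$, so $A$ and $B$ are both non-constant and $\deg(B\circ A)=\deg(A)\deg(B)=\deg(A\circ B)\geq 2$; in particular the notion of semiconjugacy makes sense for both $A\circ B$ and $B\circ A$. Now interpreting $(A\circ B)\circ A=A\circ(B\circ A)$ as $f\circ p=p\circ\eta$ with $f=A\circ B$, $\eta=B\circ A$ and $p=A$ (a non-constant polynomial), we see precisely that $B\circ A$ is semiconjugate to $A\circ B$. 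Since a polynomial is disintegrated if and only if every polynomial semiconjugate to it is disintegrated — the fact recalled after Definition~\ref{def:equiv}, see \cite[Theorem~4.4]{Pakovich-2015} — the disintegratedness of $A\circ B$ forces that of $B\circ A$. This is the route I would take; once the cited semiconjugacy fact is granted, the proof is essentially one line.

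If one wants to keep the argument internal and avoid appealing to \cite{Pakovich-2015}, I would instead prove the contrapositive by hand. Suppose $B\circ A$ is not disintegrated; then $B\circ A=\ell^{-1}\circ h\circ\ell$ for some linear $\ell$, where $h$ is one of $x^\delta$, $T_\delta$, $-T_\delta$ and $\delta=\deg(B\circ A)\geq 2$. The key device is a rotation trick: replacing $A$ by $\widetilde A:=\ell\circ A$ and $B$ by $\widetilde B:=B\circ\ell^{-1}$ changes $\widetilde A\circ\widetilde B$ into $A\circ B$ (unchanged!) while turning $\widetilde B\circ\widetilde A$ into $h$ itself. So we may assume outright that $B\circ A\in\{x^\delta,\,\pm T_\delta\}$, and it suffices to show that $A\circ B$ is linearly conjugate to a polynomial of the same type.

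In the cyclic case $B\circ A=x^\delta$, comparing roots over the algebraically closed field $K$ forces $B$ to have a single root $\gamma$, so $B(y)=b(y-\gamma)^{\deg B}$ and $A(x)=\gamma+\lambda x^{\deg A}$ for suitable $b,\lambda\in K^*$; then $A\circ B=\gamma+\lambda b^{\deg A}(x-\gamma)^\delta$, which is visibly linearly conjugate to $x^\delta$. In the dihedral case $B\circ A=\pm T_\delta$ (so $\delta\geq 3$) I would invoke the classical description of the two-term decompositions of a Chebyshev polynomial — a consequence of Ritt's second theorem, for which one may cite \cite[Appendix]{ZieveMuller-2008} or \cite{Schinzel-book} — which, after absorbing the linear factors, produces a linear polynomial $\mu$ and a sign $\epsilon\in\{\pm 1\}$ with $B=\epsilon\,T_{\deg B}\circ\mu^{-1}$ and $A=\mu\circ T_{\deg A}$; then $T_n(\epsilon x)=\epsilon^n T_n(x)$ and $T_m\circ T_k=T_{mk}$ give $A\circ B=\mu\circ\bigl(\epsilon^{\deg A}T_\delta\bigr)\circ\mu^{-1}$, linearly conjugate to $\pm T_\delta$. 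In either case $A\circ B$ is not disintegrated, proving the contrapositive; the argument is symmetric in $A$ and $B$, so in fact $A\circ B$ is disintegrated if and only if $B\circ A$ is.

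The only genuinely non-formal ingredient of the second route is the dihedral case: one must control all ways $T_\delta$ decomposes as a composition of two polynomials, together with the linear pre- and post-compositions and the sign bookkeeping forced by the $\pm T_\delta$ alternative (and the degenerate behaviour of any degree-$2$ Chebyshev factor). That is exactly where Ritt's theory does the work; the degree computation, the rotation trick and the monomial case are all elementary. If using \cite[Theorem~4.4]{Pakovich-2015} is permitted, all of this is unnecessary and the one-line argument of the first paragraph is the cleanest proof.
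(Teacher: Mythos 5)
Your first route is correct and is genuinely different from the paper's argument. The paper proves the contrapositive: if $B\circ A$ is linearly conjugate to $x^{\alpha\beta}$ (say $B\circ A = L\circ x^{\alpha\beta}\circ L^{-1}$), it factors the right-hand side as $(L\circ x^{\beta})\circ(x^{\alpha}\circ L^{-1})$, applies the degree-matching consequence of Lemma~\ref{lem:Engstrom} to split $B$ and $A$ accordingly, and reads off that $A\circ B$ is again conjugate to $x^{\alpha\beta}$; the $\pm T_{\alpha\beta}$ case is identical because only degree comparisons are used. Your semiconjugacy one-liner --- read $(A\circ B)\circ A = A\circ(B\circ A)$ as ``$B\circ A$ is semiconjugate to $A\circ B$'' and cite the invariance of disintegratedness under semiconjugacy, which the paper itself recalls via \cite[Theorem~4.4]{Pakovich-2015} --- is shorter and arguably cleaner, at the cost of quoting a less elementary external result. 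The paper's Engstrom route is self-contained within the tools of Section~3 and handles cyclic and dihedral cases with the same two lines, whereas your second (contrapositive) route treats them asymmetrically: explicit root-matching for $x^\delta$ and a classification of Chebyshev decompositions for $\pm T_\delta$. That works, but it is more work than the paper's single application of Lemma~\ref{lem:Engstrom}.

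One small slip in your second route: the conjugating substitution is written backwards. With $\widetilde A := \ell\circ A$ and $\widetilde B := B\circ\ell^{-1}$ you get $\widetilde B\circ\widetilde A = B\circ A$, i.e.\ nothing moves on that side, and $\widetilde A\circ\widetilde B = \ell\circ(A\circ B)\circ\ell^{-1}$, which is a conjugate of $A\circ B$, not $A\circ B$ itself. What you want is $\widetilde A := A\circ\ell^{-1}$ and $\widetilde B := \ell\circ B$, which gives $\widetilde B\circ\widetilde A = h$ and $\widetilde A\circ\widetilde B = A\circ B$. The rest of the case analysis then goes through as you describe.
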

	\begin{proof}
	Let $\alpha=\deg(A)$ and $\beta=\deg(B)$. 
	Assume that $B\circ A$ is linearly conjugate to
	$x^{\alpha\beta}$, namely $B\circ A=L\circ x^{\alpha\beta}\circ L^{-1}$. By Lemma~\ref{lem:Engstrom}, there is a
	linear $L_1$ such that $B=L\circ x^{\beta}\circ L_1$,
	and $A=L_1^{-1}\circ x^{\alpha}\circ L^{-1}$.
	Then $A\circ B=L^{-1}\circ x^{\alpha\beta}\circ L$,
	contradiction. The case that $B\circ A$ is linearly
	conjugate to $T_{\alpha\beta}(x)$ is
	settled similarly.
	\end{proof}
	
	\begin{proof}[Proof of Theorem~\ref{thm:semiconjugate}]  
	Assume there is a counter-example
	to Theorem~\ref{thm:semiconjugate}. Among all such
	counter-examples, let $(f,g)$
	satisfy the following properties: 
	\begin{itemize}
	\item [(a)] For every $N\leq 2d^4$,
	there does not exist a polynomial
	that is semiconjugate to both $f^{\circ N}$ and
	$g^{\circ N}$.
	\item [(b)] There exist non-constant polynomials
	$p,q,\theta\in K[x]$ and $n\in\N$ (necessarily
	greater than $2d^4$) such that
	$f^{\circ n}\circ p=p\circ\theta$ and
	$g^{\circ n}\circ q=q\circ \theta$. 
	\item [(c)] The number 
	$\deg(p)+\deg(q)$ is minimal among all counter-examples
	and all the quadruples $(\theta,p,q,n)$. 
	\end{itemize}
	
	Write $\delta=\deg(f)=\deg(g)$. Let $D=\gcd(\delta,\deg(p))$. 

Assume first that 
	$D>1$. Then by Lemma~\ref{lem:Engstrom}, 
	we can write $f=A\circ f_0$, 
	$p=A\circ p_0$ with $\deg(A)=D$ and:
	$$f_0\circ f^{\circ (n-1)}\circ A\circ p_0=p_0\circ \theta.$$
	Hence $(f_0\circ A)^{\circ n}\circ p_0=p_0\circ \theta$.
	By Lemma~\ref{lem:AB disintegrated}, $f_0\circ A$
	remains disintegrated. 
	Because of
	$\deg(p_0)<\deg(p)$
	and the minimality of $\deg(p)+\deg(q)$, the pair
	$(f_0\circ A, g)$ is not a counter-example to 
	Theorem~\ref{thm:semiconjugate}. So there is $N\leq 2d^4$
	such that there exists a common polynomial $B$ that is 
	semiconjugate to both $(f_0\circ A)^{\circ N}$ and $g^{\circ N}$. This implies
	that $B$ is semiconjugate to both
	$f^{\circ N}=(A\circ f_0)^{\circ N}$ and $g^{\circ N}$, contradiction. We conclude that $D=1$. 

So, from now on, we may and do assume that 
	$\gcd(\delta,\deg(p))=1$. Similarly, $\gcd(\delta,\deg(q))=1$.
  
  	Let $\eta=\theta^{\circ 4}$, it will
  	be  more convenient to
  	work with the following identities:
  	$$f^{\circ 4n}\circ p=p\circ \eta;\ g^{\circ 4n}\circ q=q\circ \eta$$
  	Write $b=\deg(p)$ and $\tilde{b}=\deg(q)$.
  	By Lemma~\ref{lem:Inou}, there exist
  	positive integers $c\equiv \delta^{4n}$ modulo $b$
  	and $\tilde{c}\equiv \delta^{4n}$ modulo $\tilde{b}$
  	together with linear polynomials 
  	$\ell_1,\ell_2,\ell_3,\ell_4$
  	and non-constant polynomials $P$ and $Q$
  	such that the following groups of identities
  	hold:
  	\begin{itemize}
  	\item [(i)] $\ell_1\circ f^{\circ 4n}\circ \ell_1^{-1}=x^cP(x)^{b}$, 
  	$\ell_1\circ p\circ \ell_2^{-1}=x^b$,
  	and 
  	$\ell_2\circ \eta\circ \ell_2^{-1}=x^cP(x^b)$.
  	\item [(ii)]
  	$\ell_3\circ g^{\circ 4n}\circ \ell_3^{-1}=x^{\tc}Q(x)^{\tb}$, 
  	$\ell_3\circ q\circ \ell_4^{-1}=x^{\tb}$, and 
  	$\ell_4\circ \eta\circ \ell_4^{-1}=x^{\tc}Q(x^{\tb})$.
  	\end{itemize}
	We may assume that $P(0)\neq 0$ and $Q(0)\neq 0$ (since $f$ and $g$ are not linearly conjugate to the monomial $x^\delta$). Write
	$f_1:=\ell_1\circ f^{\circ 4}\circ \ell_1^{-1}$, $p_1:=\ell_1\circ p\circ \ell_2^{-1}=x^b$, $g_1:=\ell_3\circ g^{\circ 4}\circ \ell_3^{-1}$, and $q_1:=\ell_3\circ q\circ \ell_4^{-1}=x^{\tilde{b}}$.
	We
	now consider three cases.
	
	\textbf{Case 1:} $b=\tilde{b}=1$. Then we have that
	$f^{\circ 4n}$ and $g^{\circ 4n}$ are both linearly conjugate to 
	$\eta$. Hence there is a linear $L$ such that
	$f^{\circ 4n}=(L\circ g^4\circ L^{-1})^{\circ n}$. By Lemma~\ref{lem:f^4}
	and Lemma~\ref{lem:f^NLg^N}, there is
	$N\leq \frac{d^4}{2}$ such that
	$f^{\circ 4N}$ is linearly conjugate
	to $g^{\circ 4N}$. Hence $g^{\circ 4N}$ is semiconjugate
	to both $f^{\circ 4N}$ and
	$g^{\circ 4N}$,  contradicting
	the assumption that $(f,g)$ is a counter-example.
	
	\textbf{Case 2:} $b\geq 2$ and $\tilde{b}=1$. Then
	Lemma~\ref{lem:f^4 xsPn} gives that 
	$f_1 =x^{j}P_1(x)^b$
	for some non-constant $P_1(x)\in \C[x]\setminus x\C[x]$
	and positive integer $j$ coprime to $b$. Write $W(x)=x^jP_1(x^b)$ (which is not cyclic by part (e) of Lemma~\ref{lem:Gamma}), we have:
	$$f_1\circ p_1=p_1 \circ W.$$
	This implies:
	$$f_1^{\circ n}\circ p_1=p_1 \circ W^{\circ n}.$$
	Since $f_1^{\circ n}\circ p_1=p_1\circ (\ell_2\circ \eta\circ \ell_2^{-1})$, we have that $W^{\circ n}=\zeta(\ell_2\circ\eta\circ\ell_2^{-1})$ for some
	$b$-th root of unity $\zeta$. Since $\tilde{b}=1$,
	$g^{\circ 4n}$ is linearly conjugate to $\eta$. Hence
	there is a linear $\ell$ such that:
	$$W^{\circ n}=\zeta(\ell\circ g^{\circ 4n}\circ \ell^{-1}).$$
	By Lemma~\ref{lem:f^NLg^N},
    there is $N\leq \frac{d^4}{2}$ such that 
	$W^{\circ N}$ is linearly conjugate to $g^{\circ 4N}$. 
	Since $W^{\circ N}$ is semiconjugate to
	$f_1^{\circ N}$, so is
	$g^{\circ  4N}$. Since $f^{\circ 4N}$ is linearly 
	conjugate
	to $f_1^{\circ N}$, we have that $g^{\circ 4N}$ is semiconjugate to $f^{\circ 4N}$. Hence $g^{\circ 4N}$
	is semiconjugate to both $g^{\circ 4N}$ and $f^{\circ 4N}$,
	contradicting the assumption that $(f,g)$ is a counter-example.
	
	\textbf{Case 3:} $b\geq 2$ and $\tilde{b}\geq 2$. By using similar arguments
	as in Case 2, we have that $f_1=x^{j}P_1(x)^b$ and
	$g_1=x^{k}Q_1(x)^{\tilde{b}}$ for some non-constant
	$P_1,Q_1\in K[x]\setminus x K[x]$ and positive integers 
	$j,k$
	satisfying $\gcd(j,b)=\gcd(k,\tilde{b})=1$. 
	Write $W:=x^jP_1(x^b)$ and $V:=x^kQ_1(x^{\tilde{b}})$
	(which are not cyclic by part (e) of Lemma~\ref{lem:Gamma}), 
	we have:
	$$f_1\circ p_1=p_1\circ W\ \text{and}\ g_1\circ q_1=q_1\circ V.$$
	From $f_1^{\circ n}\circ p_1=p_1\circ (\ell_2\circ \eta
	\circ\ell_2^{-1})$
	and $g_1^{\circ n}\circ q_1=q_1\circ (\ell_4\circ\eta
	\circ\ell_4^{-1})$,
	we have:
	$$W^{\circ n}=\zeta(\ell_2\circ \eta\circ\ell_2^{-1})\ \text{and}\ 
	V^{\circ n}=\mu(\ell_4\circ \eta\circ \ell_4^{-1})$$
	for some $b$-th root of unity $\zeta$ and $\tilde{b}$-th
	root of unity $\mu$. Hence there are linear polynomials
	$L$ and $L_1$ such that:
	$$W^{\circ n}=L\circ (L_1\circ V^{\circ n}\circ L_1^{-1}).$$
	By Lemma~\ref{lem:f^NLg^N},
	there exists $N\leq \frac{d^4}{2}$ such that 
	$W^{\circ N}$ is linearly conjugate to $V^{\circ N}$. Since
	$W^{\circ N}$ (respectively $V^{\circ N}$) is
	semiconjugate to
	 $f_1^{\circ N}$ (respectively $g_1^{\circ N}$),
	and
	$f_1^{\circ N}$ (respectively $g_1^{\circ N}$) is linearly
	conjugate to $f^{\circ 4N}$ (respectively $g^{\circ 4N}$),
	we conclude that there is a common polynomial
	semiconjugate to both 
	$f^{\circ 4N}$ and $g^{\circ 4N}$. This contradicts the
	assumption that $(f,g)$ is a counter-example.
	\end{proof}
	
	\begin{proof}[Proof of Corollary~\ref{cor:new semiconjugate}]
	   By Theorem~\ref{thm:semiconjugate}, we define
	   $c_1(d,2)=2d^4$ for every integer $d\geq 2$. 
	   Let $n\geq 3$,
	   and assume that we have determined $c_1(d,n)$ for all
	   smaller values of $n$ and all $d\geq 2$. 
	   By the induction hypothesis, there exist positive
	   integers 
	   $N_1\leq c_1(d,n-1)$  
	   and $\theta_1\in K[x]$
	   such that $\theta_1$ is semiconjugate to
	   $f_i^{\circ N_1}$
	   for $1\leq i\leq n-1$. 
	   
	   We have 
	   $\theta_1\approx f_1^{\circ N_1}\approx f_n^{\circ N_1}$. By 
	   Theorem~\ref{thm:semiconjugate}, there exist 
	   $N_2\leq 2d^{4N_1}$ and a polynomial 
	   $\theta$ that is semiconjugate
	   to both $\theta_1^{\circ N_2}$
	   and $f_n^{\circ N_1N_2}$. We have that
	   $\theta$ is semiconjugate
	   to $f_i^{\circ N_1N_2}$
	   for every $1\leq i\leq n$. Define 
	   $c_1(d,n)=c_1(d,n-1)2d^{4c_1(d,n-1)}$
	   so that $N:=N_1N_2\leq c_1(d,n)$; this finishes the
	   proof of Corollary~\ref{cor:new semiconjugate}.
	\end{proof}
	
	\section{The Medvedev-Scanlon classification of periodic subvarieties}\label{sec:Medvedev-Scanlon}
	Let $n\in\N$. The main results of the paper \cite{MedSca} 
	provide
	a description of periodic subvarieties of
	$\bA^n_K$ under coordinate-wise self-maps
	of the form
	$$(x_1,\ldots,x_n)\mapsto (f_1(x_1),\ldots,f_n(x_n))$$
	where $f_1,\ldots,f_n\in K[x]$ are non-constant.
	The main reference is the original paper
	by Medvedev-Scanlon \cite{MedSca} together
	with further remarks in \cite{Nguyen-IMRN2015}
	and \cite{DBHC}.
	We start with
	the following:
	\begin{proposition}\label{prop:MS 3 types}
		Let $L_1,\ldots,L_r\in K[x]$ be linear polynomials,
		let $M_1,\ldots,M_s\in K[x]$ be polynomials
		of degree at least 2 such that each $M_i$ is
		linearly conjugate to $x^{\deg(M_i)}$ or $\pm T_{\deg(M_i)}(x)$ for $1\leq i\leq s$, and let
		$f_1,\ldots,f_t\in K[x]$ be disintegrated polynomials.
		Write $n=r+s+t$ and let
		$\phi$ be the coordinate-wise self-map
		of $\bA^n$ induced by the polynomials $L_i$'s, $M_i$'s,
		and $f_i$'s. Then every irreducible $\phi$-periodic subvariety $V$
		has the form $V_1\times V_2\times V_3$
		where $V_1$, $V_2$, and $V_3$ respectively
		are periodic under the coordiniate-wise self-maps
		$L_1\times\cdots\times L_r$,
		$M_1\times\cdots\times M_s$,
		and $f_1\times\cdots\times f_t$.
	\end{proposition}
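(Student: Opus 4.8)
The plan is to reduce to the case where $V$ is genuinely $\phi$-invariant and then to read off the product structure from the Medvedev--Scanlon classification, keeping track of the three types of coordinates.

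\emph{Reduction to the invariant case.} Let $m$ be the $\phi$-period of $V$ and replace $\phi$ by $\psi:=\phi^{\circ m}=L_1^{\circ m}\times\cdots\times L_r^{\circ m}\times M_1^{\circ m}\times\cdots\times M_s^{\circ m}\times f_1^{\circ m}\times\cdots\times f_t^{\circ m}$, so that $V$ is $\psi$-invariant. This does not change the shape of the hypothesis: each $L_i^{\circ m}$ is still linear; each $M_i^{\circ m}$ is linearly conjugate to $x^{\deg(M_i)^m}$ or $\pm T_{\deg(M_i)^m}$ (using $T_\delta^{\circ m}=T_{\delta^m}$); and each $f_i^{\circ m}$ is again disintegrated (cf.\ Lemma~\ref{lem:f^4}). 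Moreover, once we show $V=V_1\times V_2\times V_3$ as a product of $\psi$-invariant subvarieties in the three blocks of coordinates, projecting the identity $\psi(V)=V$ onto each block shows $V_j=\overline{\pi_j(V)}$ is invariant under the corresponding block of $\psi$, hence periodic under the corresponding block of $\phi$ with period dividing $m$; and each $V_j$ is irreducible, being the closure of the image of the irreducible $V$ under a coordinate projection. So it suffices to produce the product decomposition for $\psi$-invariant $V$.

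\emph{Invoking the classification.} Here I would appeal to the Medvedev--Scanlon description of irreducible invariant subvarieties under split polynomial maps, \cite[Theorem~2.30]{MedSca}, in the explicit form recorded in \cite{Nguyen-IMRN2015} and \cite[Section~7.1]{DBHC}: such a $V$ is cut out by conditions, one per coordinate or per pair of coordinates, each of which is either ``$x_i$ equals a fixed invariant point'' or ``$(x_i,x_j)$ lies on a fixed invariant plane curve with non-constant projection to both factors''. The point is that a relation of the second kind between coordinates $i$ and $j$ can occur only when $f_i$ and $f_j$ lie in the same one of the three classes linear / conjugate-to-power-or-Chebyshev / disintegrated. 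For $i,j$ both of degree $\ge 2$, an invariant curve with non-constant projections forces an iterate of $f_i$ to be semiconjugate to an iterate of $f_j$ (\cite[Corollary~2.35]{MedSca}), and semiconjugacy preserves disintegratedness (\cite[Theorem~4.4]{Pakovich-2015}), so a disintegrated coordinate cannot be bound to a power- or Chebyshev-type one; and a degree-$1$ coordinate cannot be bound to a coordinate of degree $\ge 2$, since the two projections of an invariant plane curve with dominant projections must carry compatible self-maps, which is impossible when one has degree $1$ and the other degree $\ge 2$. Therefore every defining relation of $V$ involves coordinates from only one of the three blocks, giving $V=V_1\times V_2\times V_3$ with $V_i$ invariant in its block, as required.

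\emph{Expected obstacle.} The only real work is to pin down the precise form of \cite[Theorem~2.30]{MedSca} (together with the supplementary remarks) that makes ``no defining relation crosses between the three blocks'' a clean assertion, and to dispatch the elementary but slightly annoying fact that an invariant plane curve with dominant projections cannot relate a linear coordinate to a coordinate of degree $\ge 2$; the reduction to the invariant case and the descent of periodicity and irreducibility to the factors are routine.
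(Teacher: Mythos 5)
Your reduction from periodic to invariant is fine, and your observations that a cross-block binary relation is impossible (the degree-$1$ vs.\ degree-$\ge 2$ argument; semiconjugacy preserving disintegratedness) are both correct. The problem is the pivotal structural claim you hang everything on: that an irreducible $\phi$-invariant subvariety $V$ ``is cut out by conditions, one per coordinate or per pair of coordinates.'' That description is proved in Medvedev--Scanlon (and quoted in this paper as Proposition~\ref{prop:pi_ij}) only under the hypothesis that \emph{all} coordinate polynomials are disintegrated. It is false once power or Chebyshev coordinates appear. For instance, $V=\{x_1x_2x_3=1\}\subset\Gm^3\subset\bA^3$ is irreducible and invariant under $x^2\times x^2\times x^2$, yet its projection to every pair of coordinates is all of $\Gm^2$, so $V$ is not (a component of) $\bigcap_{i<j}\pi_{ij}^{-1}(\pi_{ij}(V))$. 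The same kind of failure occurs in the linear block. Since your argument derives $V=V_1\times V_2\times V_3$ by observing that each pairwise relation must stay within a block, it collapses without the pairwise-relations description; ruling out cross-block binary relations is not, by itself, enough to force the product decomposition.

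The paper sidesteps this entirely: its proof consists of citing \cite[Theorem~2.30]{MedSca}, which directly furnishes the decomposition of an irreducible invariant subvariety along the non-disintegrated/disintegrated blocks, after which the further split of the linear coordinates from the power/Chebyshev ones is handled by the same theorem (or by your degree-one projection argument). If you want to write out a proof rather than cite it, the clean route is to invoke Theorem~2.30 in its actual form to split off the disintegrated block, and only \emph{then} use the pairwise description (Proposition~\ref{prop:pi_ij}) inside that block; do not extend the pairwise description to the non-disintegrated coordinates.
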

	\begin{proof}
	This is a consequence of \cite[Theorem~2.30]{MedSca}.
	\end{proof}
	
	We now study periodic subvarieties 
	under the coordinate-wise self-map
	$f_1\times\cdots\times f_n$ where each $f_i\in K[x]$
	is disintegrated:
	\begin{proposition}\label{prop:pi_ij}
		Let $n\geq 2$, let $f_1,\ldots,f_n\in K[x]$ be disintegrated polynomials, and let $\phi=f_1\times\cdots\times f_n$
		be the induced coordinate-wise self-map
		of $\bA^n_K$. Then every irreducible $\phi$-periodic 
		subvariety $V$ of $\bA^n_K$
			is an irreducible component of 
			 $$\bigcap_{1\leq i<j\leq n}\pi_{ij}^{-1}(\pi_{ij}(Z)),$$
			 where $\pi_{ij}$ denotes the projection
			 from $\bA^n_K$
			 to the $(i,j)$-factor $\bA^2_K$.
	\end{proposition}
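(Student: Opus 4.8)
The plan is to deduce this from the Medvedev--Scanlon structure theorem for split disintegrated maps together with one elementary remark about irreducible components. First observe the trivial half: since $V\subseteq\pi_{ij}^{-1}(\pi_{ij}(V))$ for every pair $i<j$, we get
$$V\subseteq\bigcap_{1\leq i<j\leq n}\pi_{ij}^{-1}(\pi_{ij}(V)),$$
where I interpret $\pi_{ij}(V)$ as the Zariski closure of its image, so that all sets in sight are closed. Thus $V$ is already an irreducible closed subset of the right-hand side and all that remains is to show it is a \emph{maximal} one, i.e.\ an irreducible component. The elementary remark I will use is: if $W$ is an irreducible component of a closed set $Y$ and $W\subseteq Y'\subseteq Y$ with $Y'$ closed, then $W$ is also an irreducible component of $Y'$; indeed, any irreducible closed subset of $Y'$ containing $W$ is an irreducible closed subset of $Y$ containing $W$, hence equals $W$ by maximality in $Y$.

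The substantive input is the classification of periodic subvarieties under $\phi=f_1\times\cdots\times f_n$ with all $f_i$ disintegrated, i.e.\ \cite[Theorem~2.30]{MedSca} (see also \cite{Nguyen-IMRN2015} and \cite{DBHC}): an irreducible $\phi$-periodic subvariety $V\subseteq\bA^n_K$ is an irreducible component of
$$\bigcap_{1\leq i<j\leq n}\pi_{ij}^{-1}(C_{ij})$$
for suitable $(f_i\times f_j)$-periodic irreducible subvarieties $C_{ij}\subseteq\bA^2_K$ --- each being either all of $\bA^2_K$, a line through an $f_i$- or $f_j$-periodic point, a periodic point, or (when $f_i\approx f_j$) a periodic curve with non-constant projection to both axes. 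I expect the main obstacle to be precisely this step: one must match the fibered-product/skew-invariance description of invariant subvarieties in \cite{MedSca} with the ``defined by binary conditions'' formulation used here, and it is exactly at this point that the hypothesis that every $f_i$ is disintegrated is essential.

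Finally I close the loop. From $V\subseteq\pi_{ij}^{-1}(C_{ij})$ we get $\pi_{ij}(V)\subseteq C_{ij}$, hence, since $C_{ij}$ is closed,
$$V\subseteq\bigcap_{1\leq i<j\leq n}\pi_{ij}^{-1}(\pi_{ij}(V))\subseteq\bigcap_{1\leq i<j\leq n}\pi_{ij}^{-1}(C_{ij}).$$
Now apply the elementary remark with $W=V$, $Y=\bigcap_{i<j}\pi_{ij}^{-1}(C_{ij})$ (of which $V$ is an irreducible component by the previous paragraph) and $Y'=\bigcap_{i<j}\pi_{ij}^{-1}(\pi_{ij}(V))$ (a closed set with $V\subseteq Y'\subseteq Y$): this yields that $V$ is an irreducible component of $\bigcap_{i<j}\pi_{ij}^{-1}(\pi_{ij}(V))$, as desired. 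The case $n=2$ is trivial since $\pi_{12}$ is then an isomorphism, and for $n\geq 3$ no further ingredient is needed; in particular, unlike Theorem~\ref{thm:intro1}, this statement requires no assumption on whether the projections of $V$ to the one-dimensional factors are constant.
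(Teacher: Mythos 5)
Your proof is correct and follows essentially the same route as the paper, which establishes the proposition by directly citing Proposition~2.21 and Fact~2.25 of \cite{MedSca} (note the $\pi_{ij}(Z)$ in the statement is a typo for $\pi_{ij}(V)$, as you correctly inferred). The ``sandwich'' remark you add---that an irreducible component of a closed set $Y$ remains an irreducible component of any intermediate closed $Y'$ containing it---is correct and cleanly bridges the Medvedev--Scanlon classification (with arbitrary $(f_i\times f_j)$-periodic $C_{ij}$) to the specific formulation with $\pi_{ij}(V)$, so the only real difference from the paper is that you cite \cite[Theorem~2.30]{MedSca} plus this reduction where the paper cites the more tailored Proposition~2.21 and Fact~2.25.
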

	\begin{proof}
		See Proposition~2.21 and Fact~2.25 in \cite{MedSca}.
	\end{proof}
	
	 Recall the equivalence relation $\approx$ in 
		Definition~\ref{def:equiv}.	Note that
		$f\approx g$ if and only if 
		the coordinate-wise self-map $f\times g$
		of $\bA^2$ admits an irreducible periodic curve
		whose projection to each factor $\bA^1$
		is non-constant (see \cite[Corollary~2.35]{MedSca}).
		Proposition~\ref{prop:pi_ij} implies
		the following:
	\begin{proposition}\label{prop:MS equivalence classes}
		Let $s,n_1,\ldots,n_s$
		be positive integers. For $1\leq i\leq s$
		and $1\leq j\leq n_i$, let $f_{i,j}\in K[x]$
		be a disintegrated polynomial such that the $f_{i,j}$'s
		form exactly $s$ equivalence classes each of
		which being $(f_{i,j})_{1\leq j\leq n_i}$
		for $1\leq i\leq s$. For $1\leq i\leq s$,
		let $\phi_i=f_{i,1}\times\cdots\times f_{i,n_i}$
		be the induced coordinate-wise self-map
		of $\bA^{n_i}$. Write $n=n_1+\ldots+n_s$,
		identify $\bA^n=\bA^{n_1}\times\cdots\times \bA^{n_s}$,
		and let $\phi=\phi_1\times\cdots\times\phi_s$
		be the induced coordinate-wise self-map
		of $\bA^n$. The following hold:
		\begin{itemize}
			\item [(i)] Every irreducible $\phi$-periodic
			subvariety $V$ of $\bA^n$ has the form
			$V_1\times\cdots\times V_s$ where each $V_i$
			is a $\phi_i$-periodic
			subvariety of $\bA^{n_i}$ for $1\leq i\leq s$.
		
			\item [(ii)] Let $1\leq i\leq s$. There exists 
			$N_i\in\N$, non-constant polynomials
			$\theta_i$ and $p_{i,j}$ for $1\leq j\leq n_i$
			such that $f_{i,j}^{\circ N_i}\circ p_{i,j}=
			p_{i,j}\circ \theta_i$. In other words, we
			 have the commutative diagram (with $p_i:=p_{i,1}\times\cdots\times p_{i,n_i}$):
			 $$\begin{diagram}
				 		\node{\bA^{n_i}}\arrow{s,t}{p_i}\arrow{e,t}{\theta_i\times\cdots\times\theta_i}
				 		\node{\bA^{n_i}}\arrow{s,r}{p_i}\\
				 		\node{\bA^{n_i}}\arrow{e,b}{\phi_i^{\circ N_i}}\node{\bA^{n_i}}
				 \end{diagram}$$
		\end{itemize}
	\end{proposition}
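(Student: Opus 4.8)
The plan is to dispatch part (ii) immediately and spend the real effort on part (i). For (ii), observe that the polynomials $f_{i,1},\dots,f_{i,n_i}$ making up the $i$-th class all lie in one $\approx$-equivalence class, so Corollary~\ref{cor:new semiconjugate} (or, if one only wants the qualitative statement, the discussion in the Introduction following \cite{DBHC}) supplies, for each $i$, an integer $N_i\in\N$, a common non-constant $\theta_i\in K[x]$, and non-constant $p_{i,j}\in K[x]$ with $f_{i,j}^{\circ N_i}\circ p_{i,j}=p_{i,j}\circ\theta_i$ (taking $N_i=1$, $\theta_i=f_{i,1}$, $p_{i,1}=x$ when $n_i=1$); the displayed commutative diagram is just a reformulation with $p_i=p_{i,1}\times\cdots\times p_{i,n_i}$.

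For part (i), I would set up as follows. Index the coordinates of $\bA^n$ by a single label $a\in\{1,\dots,n\}$, each $a$ belonging to a unique block; write $\beta(a)\in\{1,\dots,s\}$ for that block, so the $a$-th polynomial $f_a$ is some $f_{\beta(a),j}$. Let $\rho_i\colon\bA^n\to\bA^{n_i}$ be the projection onto block $i$ and put $V_i:=\overline{\rho_i(V)}$. Since $\rho_i\circ\phi=\phi_i\circ\rho_i$ and $\phi,\phi_i$ are finite morphisms, each $V_i$ is an irreducible $\phi_i$-periodic subvariety, and tautologically $V\subseteq V_1\times\cdots\times V_s$. So it suffices to prove the reverse inclusion, and for that I would apply Proposition~\ref{prop:pi_ij}: $V$ is an irreducible component of $W:=\bigcap_{1\le a<b\le n}\pi_{ab}^{-1}\bigl(\overline{\pi_{ab}(V)}\bigr)$, where $\pi_{ab}\colon\bA^n\to\bA^2$ is the projection to the coordinates $a,b$. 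Because $V_1\times\cdots\times V_s$ is irreducible and contains $V$, once we know $V_1\times\cdots\times V_s\subseteq W$ the maximality of the irreducible component $V$ forces $V=V_1\times\cdots\times V_s$.

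Thus everything reduces to checking $\overline{\pi_{ab}(V_1\times\cdots\times V_s)}\subseteq\overline{\pi_{ab}(V)}$ for every pair $a<b$. When $\beta(a)=\beta(b)$ both sides equal $\overline{\pi_{ab}(\rho_{\beta(a)}(V))}$ and there is nothing to prove. When $\beta(a)\ne\beta(b)$, we have $f_a\not\approx f_b$ (distinct classes) and $\pi_{ab}(V)$ is an irreducible $(f_a\times f_b)$-periodic subvariety of $\bA^2$; since $f_a\not\approx f_b$, \cite[Corollary~2.35]{MedSca} forbids $\pi_{ab}(V)$ from being a curve with non-constant projection to both factors, so running through the remaining possibilities by dimension (all of $\bA^2$; a single point; a horizontal or vertical line through an appropriate periodic point) shows in each case that $\pi_{ab}(V)$ is the product of its two coordinate projections, i.e.\ $\overline{\pi_{ab}(V)}=\overline{\pi_a(V)}\times\overline{\pi_b(V)}$ (writing $\pi_a$ for the projection to the single coordinate $a$). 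Since $\overline{\pi_a(V)}\times\overline{\pi_b(V)}=\overline{\pi_{ab}(V_1\times\cdots\times V_s)}$, the required inclusion — in fact equality — follows, and (i) is proved.

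The only genuinely non-formal point is the cross-block case, namely that a periodic irreducible subvariety of $\bA^2$ for a pair of non-$\approx$-related disintegrated maps is a coordinate box; this is precisely where the Medvedev--Scanlon dichotomy \cite[Corollary~2.35]{MedSca} (already recalled above) does the work. The remaining ingredients — that block projections of a periodic variety are again periodic, that finiteness of the relevant morphisms makes all the images behave well under closure, and the bookkeeping of the degenerate dimension cases — are routine.
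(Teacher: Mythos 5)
Your proof is correct. For part (ii), your route coincides with the paper's: the paper simply cites \cite[Section~7.1]{DBHC}, and your invocation of Corollary~\ref{cor:new semiconjugate} (whose induction rests on exactly that machinery, via transitivity of $\approx$) or, for the merely qualitative version, the discussion following Definition~\ref{def:equiv}, carries the same content, with the $n_i=1$ degenerate case handled trivially. For part (i), however, you take a genuinely different and more explicit route: the paper is a one-line citation of the general structure theorem \cite[Theorem~2.30]{MedSca}, whereas you re-derive the block decomposition from two more primitive Medvedev--Scanlon facts already recorded in the paper, namely Proposition~\ref{prop:pi_ij} (that $V$ is an irreducible component of $\bigcap_{a<b}\pi_{ab}^{-1}(\overline{\pi_{ab}(V)})$) and \cite[Corollary~2.35]{MedSca} (that a periodic plane curve with two non-constant coordinate projections forces $\approx$). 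The pivot is exactly the one you identify: when $a,b$ belong to distinct blocks, $\overline{\pi_{ab}(V)}$ is an irreducible $(f_a\times f_b)$-periodic subvariety of $\bA^2$ and $f_a\not\approx f_b$ rules out the only non-product case, so $\overline{\pi_{ab}(V)}=\overline{\pi_a(V)}\times\overline{\pi_b(V)}$; same-block projections are tautologically matched; then $V_1\times\cdots\times V_s$ is an irreducible subset of the intersection containing the irreducible component $V$, forcing equality. Your parenthetical care about closures and finiteness of the coordinate maps is indeed needed (the affine projections are not proper, and one uses that $f_a\times f_b$ is a finite, hence closed, morphism to upgrade invariance of $\pi_{ab}(V)$ to invariance of its closure). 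Net effect: an argument that unpacks the black-box citation into its constituent pieces, at the cost of some bookkeeping, but without any gap.
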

	\begin{proof}
		Part (a) is a consequence of 
		\cite[Theorem~2.30]{MedSca}. Part (b) (which was
		alluded in Section~\ref{sec:semiconjugacy})
		is proved in \cite[Section~7.1]{DBHC}.
	\end{proof}

\begin{remark}
\label{same degree remark}	
We note that (according to part~(ii) of Proposition~\ref{prop:MS equivalence classes}), if there exists a periodic $\phi$-subvariety $V$ as in Proposition~\ref{prop:MS equivalence classes}, then $\deg(f_{i,1})=\cdots =\deg(f_{i,n_i})$ for each $i$.
\end{remark}

	Proposition~\ref{prop:MS equivalence classes} shows that
	it suffices to treat self-maps of $\bA^n$ 
	of the form
	$f\times\cdots\times f$ of where $f$ is disintegrated.
	Let $x_1,\ldots,x_n$ denote the coordinate functions
	of each factor $\bA^1_K$ of $\bA^n_K$.
	We have:
	\begin{proposition}\label{prop:MS diagonal}
		Let $n\in\N$ and $f\in K[x]$ be disintegrated. Let
		$\phi:=f\times\cdots\times f$ be the 
		induced coordinate-wise self-map of $\bA^n_K$. Then
		every irreducible $\phi$-periodic subvariety $V$
		of $\bA^n_K$ is defined by a collection of equations of 
		the following forms:
		\begin{itemize}
			\item [(i)] $x_i=\zeta$ for some $1\leq i\leq n$
			 and some $\zeta\in K$ that is $f$-periodic.
			 \item [(ii)] $x_i=g(x_j)$
			 for some $1\leq i\neq j\leq n$ and 
			 some non-constant $g\in K[x]$
			 that commutes with an iterate of $f$.	
		\end{itemize}
	\end{proposition}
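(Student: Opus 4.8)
The plan is to induct on $n$, pivoting on the pairwise projections of $V$ together with the Medvedev--Scanlon classification of periodic subvarieties of $\bA^2_K$. For $n=1$ there is nothing to prove: an irreducible $f$-periodic subvariety of $\bA^1_K$ is either $\bA^1_K$ or a single point, which is then $f$-periodic by definition. For the inductive step I would fix $n\geq 2$, assume the result for $n-1$, and for $1\leq j<n$ write $\pi_{jn}\colon\bA^n_K\to\bA^2_K$ for the projection onto the $(j,n)$-factor and $\pi\colon\bA^n_K\to\bA^{n-1}_K$ for the projection forgetting the last coordinate. Since $\pi_{jn}$ intertwines $\phi$ with $f\times f$ and any iterate of $f\times f$ is a finite, hence closed, morphism, the closure $W_{jn}:=\overline{\pi_{jn}(V)}$ is an irreducible $(f\times f)$-periodic subvariety of $\bA^2_K$. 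I would then invoke the classification of such planar periodic subvarieties from \cite{MedSca}: each $W_{jn}$ is one of (a) all of $\bA^2_K$; (b) a point with both coordinates $f$-periodic; (c) a line $\{x_j=\zeta\}$ or $\{x_n=\zeta\}$ with $\zeta$ an $f$-periodic point; (d) the graph of a non-constant polynomial commuting with an iterate of $f$. Thus, unless $W_{jn}=\bA^2_K$, there is an equation
$$x_a=r(x_b),\qquad \{a,b\}=\{j,n\},$$
holding identically on $V$, in which $r$ is either an $f$-periodic point (a constant) or a non-constant polynomial commuting with an iterate of $f$; that is, an equation of type~(i) or of type~(ii).

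First I would treat the case that $W_{jn}\neq\bA^2_K$ for some $j<n$. Fixing the corresponding equation $x_a=r(x_b)$ on $V$, let $\pi_a\colon\bA^n_K\to\bA^{n-1}_K$ forget the $a$-th coordinate; exactly as above, $V^{(a)}:=\overline{\pi_a(V)}$ is an irreducible subvariety of $\bA^{n-1}_K$ periodic under the coordinate-wise self-map $f\times\cdots\times f$. Since $b\neq a$, the coordinate $x_b$ survives on $V^{(a)}$, so $r(x_b)$ is a regular function there and $G:=\pi_a^{-1}(V^{(a)})\cap\{x_a=r(x_b)\}$ is its graph over $V^{(a)}$, hence irreducible with $\dim G=\dim V^{(a)}$. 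Now $V\subseteq G$ while $\pi_a|_V$ is dominant onto $V^{(a)}$, so $\dim V\geq\dim V^{(a)}=\dim G\geq\dim V$, forcing $V=G$. Applying the inductive hypothesis to $V^{(a)}$, pulling the resulting equations back along $\pi_a$, and adjoining $x_a=r(x_b)$ then exhibits $V$ in the required form.

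In the complementary case $W_{jn}=\bA^2_K$ for every $j<n$, I would invoke Proposition~\ref{prop:pi_ij}, which tells us that $V$ is an irreducible component of $\bigcap_{1\leq i<j\leq n}\pi_{ij}^{-1}\bigl(\overline{\pi_{ij}(V)}\bigr)$. Every factor indexed by a pair containing $n$ equals $\bA^n_K$, and each remaining factor is pulled back through $\pi$, so the intersection is $\pi^{-1}(Y')=Y'\times\bA^1_K$ for some closed $Y'\subseteq\bA^{n-1}_K$; its irreducible components are the subvarieties $C\times\bA^1_K$ with $C$ a component of $Y'$, whence $V=\overline{\pi(V)}\times\bA^1_K$. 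The inductive hypothesis applied to $\overline{\pi(V)}$ then finishes the argument, the last coordinate remaining unconstrained.

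The hard part is the planar classification from \cite{MedSca}: that a periodic plane curve with non-constant projection to each factor must be the graph of a polynomial commuting with an iterate of $f$ --- this is precisely where disintegratedness is essential (it is false for the cyclic and dihedral maps), and granting it the rest is bookkeeping about projections. A subtlety I would be careful about is that Proposition~\ref{prop:pi_ij} only yields $V$ as a \emph{component} of the intersection of the pulled-back pairwise projections, and an arbitrary such component need not be defined by equations of types~(i)--(ii) --- it may, for instance, involve points that are merely preperiodic rather than periodic. This is why the induction is run along projections of $V$, which stay periodic, rather than along components of the ambient intersection.
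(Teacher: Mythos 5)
Your proposal is correct, but it takes a different route than the paper, which simply defers to \cite[Theorem~6.24]{MedSca}: that theorem \emph{is} Proposition~\ref{prop:MS diagonal} for general $n$, so the paper's proof is a one-line citation. What you do instead is accept only the $n=2$ instance of the Medvedev--Scanlon result (a periodic plane curve under $f\times f$ with both projections dominant is the graph of a polynomial commuting with an iterate of $f$) and then supply your own inductive reduction from $n$ to $n-1$. That reduction is sound: the case split on whether some $W_{jn}\neq\bA^2_K$ is exhaustive; the graph-plus-dimension argument correctly forces $V=G$; and when all $W_{jn}=\bA^2_K$ your use of Proposition~\ref{prop:pi_ij} to conclude $V=\overline{\pi(V)}\times\bA^1_K$ is valid, with the key observation that the factors involving index $n$ contribute nothing and the rest pull back through $\pi$. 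All the equivariance/periodicity-of-projections facts and the identification of components of $Y'\times\bA^1_K$ are standard and used correctly. Worth noting: this reduction is essentially the same mechanism Medvedev--Scanlon themselves use to pass from the planar case to general $n$ (their Proposition~2.21/Fact~2.25, reproduced here as Proposition~\ref{prop:pi_ij}), so you have in effect reconstructed the part of their argument the paper compresses into a citation, which makes the dependency structure more transparent but buys no new generality. Your closing caveat --- that an arbitrary component of $\bigcap\pi_{ij}^{-1}(\overline{\pi_{ij}(V)})$ need not itself be periodic, so one must track $V$ and its equivariant projections rather than random components --- is exactly the right thing to worry about and is handled correctly in your argument.
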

	\begin{proof}
		See \cite[Theorem~6.24]{MedSca}.
	\end{proof}

	\section{Proof of Theorem~\ref{thm:intro1}}\label{sec:proof main}
	Obviously, we can define $c(d,1)=1$ for
	every $d\geq 2$ since $\bA^1$ is the only 
	periodic subvariety satisfying the 
	condition of the theorem when $n=1$. Let $n\geq 2$, assume
	that $c(d,n)$ has been defined for all smaller values of
	$n$ and for all $d\geq 2$. Recall the
	constant $c_1(d,n)$ in Theorem~\ref{cor:new semiconjugate},
	define:
	$$c(d,n):=\max\left\{c(d,n-1)^{n-1},\frac{d^{c_1(d,n)}}{2}\right\}.$$
	Under the equivalence
	relation $\approx$, assume that $f_1,\ldots,f_n$
	belong to exactly $s$ equivalence classes
	whose sizes are $n_1,\ldots,n_s$. We consider two cases:
	
	\textbf{Case 1:} $s\geq 2$. By Proposition~\ref{prop:MS equivalence classes}, after rearranging the
	factors $\bA^1$ of $\bA^n$, every irreducible
	$\varphi$-periodic
	subvariety $V$ of $\bA^n$ has the form
	$V_1\times\cdots\times V_s$. By the induction hypothesis,
	the period of each $V_i$ is at most
	$c(d,n_i)$. Hence the period of $V$ is at most:
	$$c(d,n_1)\ldots c(d,n_s)\leq c(d,n-1)^{n-1}\leq c(d,n).$$
	
	\textbf{Case 2:} $s=1$. By Theorem~\ref{cor:new semiconjugate}, there exist a positive integer $N\leq c_1(d,n)$
	and non-constant polynomials $\theta,p_1,\ldots,p_n\in K[x]$ such that $f_i^{\circ N}\circ p_i=p_i\circ \theta$
	for $1\leq i\leq n$. Let $\phi:=\theta\times\cdots\times\theta$ and $\rho:=p_1\times\cdots\times p_n$ be the
	induced coordinate-wise self-maps of
	$\bA^n$. We have the commutative diagram:
	
	$$\begin{diagram}
				 		\node{\bA^n}\arrow{s,t}{\rho}\arrow{e,t}{\phi}
				 		\node{\bA^n}\arrow{s,r}{\rho}\\
				 		\node{\bA^n}\arrow{e,b}{\varphi^{\circ N}}\node{\bA^n}
				 \end{diagram}$$
	
	Since $V$ is periodic under $\varphi^{\circ N}$, 
	some irreducible component $\tilde{V}$
	of $\rho^{-1}(V)$ is periodic under
	$\phi$. Since the projection from $V$ to each factor
	$\bA^1$ is non-constant, so is the projection
	from $\tilde{V}$ to each factor $\bA^1$. Therefore,
	by Proposition~\ref{prop:MS diagonal},
	$\tilde{V}$ is defined by equations of the form
	$x_i=g(x_j)$ for some $1\leq i\leq j$ and 
	some non-constant $g\in K[x]$ commuting with an iterate of 
	$\theta$. By Lemma~\ref{lem:commute everything},
	there is $N_1\leq \frac{\deg(\theta)}{2}=\frac{d^N}{2}$
	such that $\theta^{\circ N_1}$
	commutes with $g$. Hence the $\phi$-period
	of $\tilde{V}$
	is at most $N_1$. Consequently, the
	$\varphi$-period of $V$ is at most:
	$$N_1\leq \frac{d^N}{2}\leq \frac{d^{c_1(d,n)}}{2}\leq c(d,n).$$
	This finishes the proof of Theorem~\ref{thm:intro1}.

	\section{Proof of Theorem~\ref{thm:DML}}\label{sec:proof DML}
	\subsection{A simple reduction}
Obviously, it suffices to prove Theorem~\ref{thm:DML} when $C$ is irreducible. As shown in the proof of \cite[Theorem~1.4]{BGKT} (see also the proof of Conjecture~5.10.0.17 as a consequence of Conjecture~5.10.0.18 in \cite{BGT-book}), it suffices to prove Theorem~\ref{thm:DML} when $m=2$. Also, the theorem (in the case $m=2$) follows immediately if the curve $C$ does not project dominantly onto both axes of $\bA^2$; hence, in particular, we may assume that the polynomials $F_i$'s are not constant. In addition, write $\alpha:=(\alpha_1,\alpha_2)$ and if $\alpha_i$ is preperiodic under the action of $F_i$
for some $i\in\{1,2\}$, then the result also follows easily (see \cite[Proposition~3.1.2.9]{BGT-book}). Furthermore, if $C$ is periodic under the action of $\Phi$, then Theorem~\ref{thm:DML} follows readily. Therefore, all we need to prove is the following theorem.

\begin{theorem}
\label{thm:the result}
Let $F_1(x), F_2(x)\in K[x]$ be non-constant and let $\alpha_1,\alpha_2\in K$ such that for each $i=1,2$, we have that $\alpha_i$ is not $F_i$-preperiodic. Let $\alpha:=(\alpha_1,\alpha_2)$, let $\Phi:\bA^2_K\lra \bA^2_K$ be the endomorphism given by $\Phi(x,y):=(F_1(x),F_2(y))$, and let $C\subset \bA^2_K$ be an irreducible curve which is not $\Phi$-periodic. Assume that the projection from $C$ to each factor
$\bA^1_K$ is not constant. Then the set $S:=\{n\in\N\colon \Phi^n(\alpha)\in C\}$ is finite.
\end{theorem}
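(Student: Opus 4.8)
The plan is to derive Theorem~\ref{thm:the result} from the case $K=\Qbar$, which is a special case of Xie's theorem \cite[Theorem~0.3]{Xie-2015}, by a specialization argument in which the uniform bound of Theorem~\ref{thm:intro1} plays the decisive role. Set $d:=\max(\deg F_1,\deg F_2)$. All the data---the coefficients of $F_1,F_2$, the coordinates $\alpha_1,\alpha_2$, and finitely many generators $g_1,\dots,g_r\in K[x,y]$ of the ideal of $C$---involve only finitely many elements of $K$, hence lie in a finitely generated $\Q$-subalgebra $R\subset K$, which is a domain (recall $\Q\subset K$ since $\cha K=0$). For a $\Q$-algebra homomorphism $\sigma\colon R\to\Qbar$ (equivalently, a closed point of the irreducible $\Q$-variety $\Spec R$) write $F_{i,\sigma},\alpha_\sigma,\Phi_\sigma$ for the induced objects, put $C_\sigma:=V(g_{1,\sigma},\dots,g_{r,\sigma})\subset\bA^2_{\Qbar}$, and set $S_\sigma:=\{n\in\N\colon\Phi_\sigma^{\circ n}(\alpha_\sigma)\in C_\sigma\}$. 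Since $\alpha\in R^2$ and $F_i\in R[x]$, each $g_j(\Phi^{\circ n}(\alpha))$ lies in $R$, and if $n\in S$ it vanishes in $K$, hence in $R$; applying $\sigma$ then gives $\Phi_\sigma^{\circ n}(\alpha_\sigma)\in C_\sigma$. So $S\subseteq S_\sigma$ for \emph{every} $\sigma$, and it suffices to produce one $\sigma$ with $S_\sigma$ finite. Assume for now that $F_1$ and $F_2$ are both disintegrated; the remaining cases are treated at the end.

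I would choose $\sigma$ so that, possibly after shrinking $\Spec R$ to a dense open: (i) $C_\sigma$ is a geometrically irreducible curve; (ii) both coordinate projections $C_\sigma\to\bA^1_{\Qbar}$ are non-constant; (iii) $\deg F_{i,\sigma}=\deg F_i$ and $F_{i,\sigma}$ is disintegrated for $i=1,2$; and, crucially, (iv) $\Phi_\sigma^{\circ N}(C_\sigma)\neq C_\sigma$ for every integer $N$ with $1\le N\le c(d,2)$, where $c(d,2)$ is the constant of Theorem~\ref{thm:intro1}. Conditions (i)--(iii) hold for $\sigma$ outside a proper closed subset of $\Spec R$ by standard spreading-out: geometric irreducibility and the dimension of fibres are generic properties, non-vanishing of a leading coefficient is open, and ``linearly conjugate to $x^\delta$ or $\pm T_\delta(x)$'' is a closed condition on the coefficients that fails for $F_i$ over $K$, hence over $\Frac R$. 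The role of Theorem~\ref{thm:intro1} is precisely that (iv) involves only \emph{finitely many} values of $N$: without a uniform period bound one would have to impose $\Phi_\sigma^{\circ N}(C_\sigma)\neq C_\sigma$ for \emph{all} $N\ge1$, which over the countable field $\Qbar$ one cannot in general arrange. Since $C$ is not $\Phi$-periodic, $\Phi^{\circ N}(C)\neq C$ over $\Frac R$ for every $N$, so each of the finitely many conditions in (iv) again holds off a proper closed subset of $\Spec R$. A finite intersection of dense opens of $\Spec R$ is a nonempty open, hence contains a closed point; fix such a $\sigma$. (If $\Frac R$ is already algebraic over $\Q$, take $\sigma$ to be an embedding $R\into\Qbar$ and (i)--(iv) hold automatically.) By (ii), (iii) and Theorem~\ref{thm:intro1}, every irreducible $\Phi_\sigma$-periodic curve in $\bA^2_{\Qbar}$ with non-constant projections has $\Phi_\sigma$-period at most $c(d,2)$; together with (iv) this forces $C_\sigma$ to be \emph{not} $\Phi_\sigma$-periodic.

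Now suppose, towards a contradiction, that $S$ is infinite, so that $S_\sigma\supseteq S$ is infinite. If both $\alpha_{1,\sigma}$ and $\alpha_{2,\sigma}$ were preperiodic, the $\Phi_\sigma$-orbit of $\alpha_\sigma$ would be finite, whence $S_\sigma$ would be finite; so some coordinate, say $\alpha_{2,\sigma}$, is not $F_{2,\sigma}$-preperiodic. If $\alpha_{1,\sigma}$ is preperiodic---say $F_{1,\sigma}^{\circ n}(\alpha_{1,\sigma})$ depends only on $n\bmod p$ once $n\ge n_0$---then on each residue class modulo $p$ (with $n\ge n_0$) the first coordinate of $\Phi_\sigma^{\circ n}(\alpha_\sigma)$ is a fixed $\beta\in\Qbar$; as $C_\sigma$ is not the vertical line $\{x=\beta\}$ by (ii), the set $C_\sigma\cap\{x=\beta\}$ is finite, while $n\mapsto F_{2,\sigma}^{\circ n}(\alpha_{2,\sigma})$ is injective (a repetition would make $\alpha_{2,\sigma}$ preperiodic), so only finitely many $n$ in the class lie in $S_\sigma$; hence $S_\sigma$ is finite, a contradiction. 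So $\alpha_{1,\sigma}$ is not preperiodic either. Xie's theorem over $\Qbar$ then shows $S_\sigma$ is a finite union of arithmetic progressions; an infinite one $\{a+bk\colon k\in\N_0\}$ would give $\Phi_\sigma^{\circ(a+bk)}(\alpha_\sigma)\in C_\sigma$ for all $k$, and since $\alpha_\sigma$ is not preperiodic this is an infinite subset of the irreducible curve $C_\sigma$, so its Zariski closure equals $C_\sigma$; applying the finite morphism $\Phi_\sigma^{\circ b}$ yields $\Phi_\sigma^{\circ b}(C_\sigma)=C_\sigma$, i.e.\ $C_\sigma$ is $\Phi_\sigma$-periodic, contradicting the previous paragraph. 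Therefore $S_\sigma$---and a fortiori $S$---is finite, completing this case.

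It remains to handle the case where some $F_i$, say $F_2$, is linearly conjugate to $x^{\delta_2}$ or $\pm T_{\delta_2}(x)$. If $F_1$ is disintegrated, then Proposition~\ref{prop:MS 3 types} shows that no irreducible $\Phi$-periodic subvariety of $\bA^2$ can have non-constant projection to both factors unless it equals $\bA^2$ (a proper one splits as a product across the disintegrated block and the power/Chebyshev block, forcing a coordinate to be constant); the same applies to $\Phi_\sigma$, whose two factors have the same respective types for generic $\sigma$. Hence (iv) is automatic, and the argument of the last two paragraphs goes through with no appeal to Theorem~\ref{thm:intro1}. Finally, if both $F_1$ and $F_2$ are linearly conjugate to power or Chebyshev maps, the statement reduces---using the standard relation between $T_\delta(x)$ and $x^\delta$ in the Chebyshev case---to Mordell--Lang for $\Gm^2$ (Laurent's theorem, valid over any field of characteristic zero) together with an elementary analysis of when a geometric progression lands in a coset; I omit these routine details. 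The principal difficulty throughout is the one just isolated: the hypothesis ``$C$ is not $\Phi$-periodic'' need not survive specialization to $\Qbar$, and it is exactly Theorem~\ref{thm:intro1} that collapses the a priori infinitely many closed conditions ``$\Phi_\sigma^{\circ N}(C_\sigma)=C_\sigma$'' to finitely many; a lesser subtlety, handled by hand above, is that ``$\alpha_i$ is not $F_i$-preperiodic'' can genuinely fail after specialization, so the proof must not depend on it.
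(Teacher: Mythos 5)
Your proof is correct in its core case and is a genuinely different---arguably more streamlined---argument than the paper's, so let me first compare the two. The paper also uses Theorem~\ref{thm:intro1} plus specialization, but in the opposite direction and with more machinery: it inducts on the transcendence degree of a finitely generated subfield $\mathcal{K}/\Qbar$; assumes $S$ infinite; specializes one transcendence degree down at a time at places of a curve; verifies (via \cite[Proposition~7.8]{BGKT} and, crucially, the delicate \cite[Proposition~6.2]{GTZieve}) that disintegration and non-preperiodicity of $\alpha_2$ survive at infinitely many places; applies the \emph{inductive hypothesis} to conclude $C_\fp$ is periodic for those places; and then invokes Theorem~\ref{thm:intro1} to make the period $N$ uniform across $\fp$, forcing $\Phi^{\circ N}(C)=C$ and a contradiction. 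You instead specialize in a single step, directly from a finitely generated $\Q$-algebra $R$ to $\Qbar$, and use Theorem~\ref{thm:intro1} ``backwards'': since periodicity of a curve with non-constant projections can only occur with period $\le c(d,2)$, ``$C_\sigma$ not periodic'' is a \emph{finite} conjunction of open conditions on $\Spec R$, which you can therefore arrange by genericity. You then apply Xie's theorem directly to the good specialization, handling by hand the possibility that $\alpha_\sigma$ specializes to a preperiodic point---which dispenses with the need for \cite[Proposition~6.2]{GTZieve} entirely. This is a clean and legitimate alternative, and arguably better isolates exactly what Theorem~\ref{thm:intro1} is for.

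Two small but genuine gaps remain in your peripheral cases. First, you silently assume that each $F_i$ is either disintegrated or linearly conjugate to $x^{\delta_i}$ or $\pm T_{\delta_i}$; you never address $F_i$ of degree $1$. If both $F_1,F_2$ are linear, Proposition~\ref{prop:MS 3 types} does \emph{not} force a periodic curve to have a constant projection (a linear block is a full $\bA^r$-factor in that classification, with no product structure inside it), so your genericity scheme for condition (iv) has no uniform period bound to lean on; the paper disposes of this case by citing Bell \cite{Bell}. Second, in the case where both $F_1,F_2$ are power/Chebyshev, your invocation of Laurent's theorem is off: the orbit $\{(\beta_1^{\delta_1^n},\beta_2^{\delta_2^n})\}$ is not contained in a finitely generated subgroup in general, so Laurent does not apply; what is needed is the dynamical Mordell--Lang statement for endomorphisms of $\Gm^2$, which is a genuinely different theorem (the paper cites \cite[Theorem~1.8]{GT09} and \cite[Theorem~1.3]{BGT}). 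Both gaps are easy to repair with the right citations and do not affect the main line of your argument.
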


The rest of this section is dedicated to the proof of Theorem~\ref{thm:the result}. Note that Theorem~\ref{thm:the result} 
in the case $K=\Qbar$ follows from recent work of Xie \cite{Xie-2015}.
In the next subsection, we explain how to combine Theorem~\ref{thm:intro1} together with standard specialization arguments
to settle the case of general $K$.

\subsection{Proof of Theorem~\ref{thm:the result}}
\label{subsec:proof the result}

We work under the hypotheses of Theorem~\ref{thm:the result}.

Let $\mathcal{K}\subset K$ be a finitely generated field over $\Qbar$ such that $\alpha_1,\alpha_2\in \mathcal{K}$, the coefficients of $F_1$ and of $F_2$ are in $\mathcal{K}$, and also the curve $C$ is defined by $G(x,y)=0$ for
$G(x,y)\in \mathcal{K}[x,y]$. We argue by induction on the transcendence degree $e$ of $\mathcal{K}/\Qbar$. The case $e=0$ has been obtained by Xie \cite[Theorem~0.3]{Xie-2015}. So, we may assume $\mathcal{K}$ is a function field of transcendence degree $1$ over some subfield $E$, and moreover, we may assume that Theorem~\ref{thm:the result} holds if $F_1$, $F_2$, $\alpha_1$, $\alpha_2$ and $C$ are all defined over $\Ebar$. 
If both $F_1$ and $F_2$ are linear polynomials, then $\Phi$ is an automorphism of $\bA^2$ and in this case, the Dynamical Mordell-Lang Conjecture is known to hold as proven by Bell \cite{Bell}. So, from now on, assume that $\deg(F_2)\ge 2$. Assume that the set $\{m\in\N: \Phi^{\circ m}(\alpha)\in C\}$
is infinite and we will arrive at a contradiction.

At the expense of replacing $E$ by its algebraic closure inside $\mathcal{K}$, we may assume that $\mathcal{K}$ is the function field of a smooth geometrically irreducible projective curve 
$X$ defined over $E$. Then for all but finitely many point $\fp$ of $X(\Ebar)$ 
we can specialize both $F_1$ and $F_2$ at those points and the corresponding specialization $F_{1,\fp},F_{2,\fp}\in \Ebar[x]$  satisfy the following properties:
\begin{itemize}
\item[(i)] $\deg(F_{i,\fp})=\deg(F_i)$ for each $i=1,2$;
\item[(ii)] for $i=1,2$, if $F_i$ is disintegrated, then $F_{i,\fp}$ is disintegrated.
\end{itemize}
Condition (i) is verified by all places of good reduction for the polynomials $F_1$ and $F_2$. Condition~(ii) is also satisfied by all but finitely many places in $X(\Ebar)$ according to \cite[Proposition~7.8]{BGKT}. In addition to conditions (i)-(ii), the specialization at all but finitely many points of $X(\Ebar)$ satisfies the following properties:
\begin{itemize}
\item[(iii)] each coefficient of $G$ is integral at $\fp$; and
\item[(iv)] the curve $C_\fp$ which is the zero locus of $G_{\fp}(X,Y)=0$ (where $G_{\fp}$ is the polynomial obtained by reducing each coefficient of $G$ modulo $\fp$) is also geometrically irreducible (over $\Ebar$). In addition, the
projection from $C_\fp$ to each factor 
$\bA^1_{\Ebar}$ is not constant.
\end{itemize} 
Clearly, condition (iii) is satisfied by all but finitely many places in $X(\Ebar)$. The same is true regarding the first
assertion of condition~(iv) (this follows from the Bertini-Noether theorem, see \cite[pp.~170]{FriedJarden}). Since
$\deg_y(G)=\deg_y(G_\fp)$ and $\deg_x(G)=\deg_x(G_\fp)$
for all but finitely many $\fp$, the
second assertion in condition~(iv) also holds.
Finally, according to \cite[Proposition~6.2]{GTZieve}, there are infinitely many $\fp\in X(\Ebar)$ such that:
\begin{itemize}
\item[(v)] the reduction $\alpha_{2,\fp}$ of $\alpha_2$ modulo $\fp$ is not preperiodic under  $F_{2,\fp}$.
\end{itemize}
We denote by $T$ the infinite set of places $\fp$ satisfying conditions (i)-(v). Then for each such $\fp\in T$, the orbit of $(\alpha_{1,\fp}, \alpha_{2,\fp})$ under $F_{1,\fp}\times F_{2,\fp}$ intersects the curve $C_{\fp}$ in infinitely many points. So, by the inductive hypothesis, we know that $C_{\fp}$ is periodic under $F_{1,\fp}\times F_{2,\fp}$. Since
the projection from $C_\fp$ to 
each $\bA^1_{\Ebar}$ is non-constant and 
$\deg(F_2)\geq 2$,
Proposition~\ref{prop:MS 3 types} and 
Proposition~\ref{prop:MS equivalence classes} 
show that it
suffices to consider the following two cases.

{\bf Case 1.}  $F_1$ and $F_2$ are disintegrated polynomials
of the same degree $\delta$ (see Remark~\ref{same degree remark} applied to the polynomials $F_{1,\fp}$ and $F_{2,\fp}$, and also note that $\deg(F_1)=\deg(F_{1,\fp})$ and $\deg(F_2)=\deg(F_{2,\fp})$ for $\fp\in T$). By Theorem~\ref{thm:intro1},
there is a constant $N$ depending only on $\delta$
such that $(F_{1,\fp}\times F_{2,\fp})^{\circ N}(C_\fp)=C_\fp$
for every $\fp\in T$. Since $T$ is infinite, we have
$\Phi^{\circ N}(C)=C$. This gives that $C$ is $\Phi$-periodic, contradiction.

{\bf Case 2.} Both $F_1$ and $F_2$ are not disintegrated.
Write $\delta_i=\deg(F_i)$ for $i=1,2$. It
is more convenient to regard $\Phi$
as a self-map of $(\bP^1)^2$ and 
replace $C$ by its Zariski closure in $(\bP^1)^2$. 
For $i=1,2$, define 
$\nu_i(x)=x$ if $F_i$ is linearly conjugate to a monomial
and $\nu_i(x)=x+\frac{1}{x}$ if $F_i$ is linearly conjugate
to $\pm T_{\delta_i}(x)$. Let $\nu=\nu_1\times\nu_2$ be
the corresponding coordinate-wise self-map of
$(\bP^1)^2$. We have the commutative diagram:
	$$\begin{diagram}
	   \node{(\bP^1)^2}\arrow{s,t}{\nu}\arrow{e,t}{\tilde{\Phi}}
				 		\node{(\bP^1)^2}\arrow{s,r}{\nu}\\
				 		\node{(\bP^1)^2}\arrow{e,b}{\Phi}\node{(\bP^1)^2}
				 \end{diagram}$$
where $\tilde{\Phi}$ has the form $(x,y)\mapsto (\pm x^{\delta_1},\pm x^{\delta_2})$. Pick $\beta\in \nu^{-1}(\alpha)$, then there is an irreducible component
$\tilde{C}$ of $\nu^{-1}(C)$ having an infinite intersection
with the $\tilde{\Phi}$-orbit of $\beta$.
In this case, it is well-known 
that $\tilde{C}$ is  $\tilde{\Phi}$-periodic (see either \cite[Theorem~1.8]{GT09}, or 
\cite[Theorem~1.3]{BGT} with the remark that $\tilde{\Phi}$ induces an \'etale
endomorphism of $\Gm^2$). Therefore $C$ is $\Phi$-periodic, contradiction. This concludes the proof of Theorem~\ref{thm:the result} and therefore the proof of Theorem~\ref{thm:DML}.
	
	\bibliographystyle{amsalpha}
	\bibliography{Period-Oct4}

\end{document}